\newtheorem{theorem}{Theorem}
\newtheorem{lemma}[theorem]{Lemma}
\newtheorem{corollary}[theorem]{Corollary}
\newtheorem{proposition}[theorem]{Proposition}
\newtheorem{remark}[theorem]{Remark}
\newcommand{\patch}{\omega}
\newcommand{\Patch}{\Omega}
\newcommand\sign{\operatorname{sign}}
\newcommand\osc{\mathrm{osc}}
\def\eps{\varepsilon}
\def\di{\mathrm{d}}
\DeclareMathOperator{\supp}{supp}
\DeclareMathOperator*{\argmin}{arg\, min}
\newcommand{\ip}[2]{(#1\hspace*{.5mm},#2)}
\newcommand{\dual}[2]{\langle#1\hspace*{.5mm},#2\rangle}
\newcommand{\norm}[3][]{#1\|#2#1\|_{#3}}
\newcommand{\snorm}[2]{|#1|_{#2}}
\newcommand{\diam}{\mathrm{diam}}
\def\div{{\rm div\,}}
\newcommand{\Hdivset}[1]{\boldsymbol{H}(\div;#1)}
\newcommand{\HdivsetN}[1]{\boldsymbol{H}_N(\div;#1)}
\newcommand{\aalpha}{\boldsymbol{\alpha}}
\newcommand{\set}[2]{\big\{#1\,:\,#2\big\}}
\newcommand{\pwnabla}{\nabla_\TT}
\newcommand{\pwdiv}{\div_\TT}
\newcommand{\RT}{\ensuremath{\mathcal{RT}}}
\newcommand{\R}{\ensuremath{\mathbb{R}}}
\newcommand{\N}{\ensuremath{\mathbb{N}}}
\newcommand{\vv}{\ensuremath{\boldsymbol{v}}}
\newcommand{\ww}{\ensuremath{\boldsymbol{w}}}
\newcommand{\TT}{\ensuremath{\mathcal{T}}}
\newcommand{\cS}{\ensuremath{\mathcal{S}}}
\newcommand{\PP}{\ensuremath{\mathcal{P}}}
\newcommand{\OO}{\ensuremath{\mathcal{O}}}
\newcommand{\EE}{\ensuremath{\mathcal{E}}}
\newcommand{\normal}{\ensuremath{{\boldsymbol{n}}}}
\newcommand{\VV}{\ensuremath{\mathcal{V}}}
\newcommand\tracediv[1]{\mathrm{tr}_{#1}^{\mathrm{div}}}
\newcommand\tracegrad[1]{\mathrm{tr}_{#1}^{\mathrm{grad}}}
\newcommand{\ssigma}{{\boldsymbol\sigma}}
\newcommand{\ttau}{{\boldsymbol\tau}}
\newcommand{\cchi}{{\boldsymbol\chi}}
\newcommand{\uu}{\boldsymbol{u}}
\begin{document}

\title{MINRES for second-order PDEs with singular data}
\date{\today}

\author{Thomas F\"uhrer}

\author{Norbert Heuer}
\address{Facultad de Matem\'{a}ticas, Pontificia Universidad Cat\'{o}lica de Chile, Santiago, Chile}
\email{\{tofuhrer,nheuer\}@mat.uc.cl}

\author{Michael Karkulik}
\address{Departamento de Matem\'atica, Universidad T\'ecnica Federico Santa Mar\'ia, Valpara\'iso, Chile}
\email{michael.karkulik@usm.cl}

\thanks{{\bf Acknowledgment.} 
This work was supported by ANID through FONDECYT projects and 1210391 (TF), 1190009 (NH) and 1210579 (MK)}

\keywords{Minimum residual method, least-squares method, discontinuous Petrov--Galerkin method, singular data}
\subjclass[2010]{65N30, 
                 65N12 
                 }
\begin{abstract}
  Minimum residual methods such as the least-squares finite element method (FEM) or the discontinuous Petrov--Galerkin method with optimal test functions (DPG) usually exclude singular data, e.g., non square-integrable loads.  We consider a DPG method and a least-squares FEM for the Poisson problem.
  For both methods we analyze regularization approaches that allow the use of $H^{-1}$ loads, and also study the case of point loads. 
  For all cases we prove appropriate convergence orders. 
  We present various numerical experiments that confirm our theoretical results. 
  Our approach extends to general well-posed second-order problems.
\end{abstract}
\maketitle

\section{Introduction}
The motivation of this work is to analyze minimum residual finite element methods (MINRES FEM) with source functionals in $H^{-1}(\Omega)$, the dual space of $H_0^1(\Omega)$ with $\Omega\subset \R^d$ ($d=2,3$) a polytopal domain, and point sources.
Many of the popular MINRES FEM suffer from the fact that minimization is considered with respect to a stronger norm than the natural norm induced by the underlying PDE and, thus, is often not suited/defined for the use of singular load terms. 
Throughout, we focus on the Poisson problem but stress that our proposed methods extend to general second-order elliptic scalar problems, linear elasticity, or Stokes-type problems, provided that regularity results are available (see Appendix~\ref{sec:extension}).
Now, to illustrate the complications when applying MINRES to a problem with singular data, let us consider the following least-squares finite element method of a first-order reformulation (FOSLS) of the Poisson problem,
\begin{align}\label{intro:lsfem}
  (u_h,\ssigma_h) = \argmin_{(v_h,\ttau_h)\in W_h} \norm{\div\ttau_h + f}{}^2 + \norm{\nabla v_h-\ttau_h}{}^2.
\end{align}
Here, $W_h\subseteq H_0^1(\Omega)\times \Hdivset\Omega$ is a lowest-order discretization space and $\norm\cdot{}$ denotes the $L^2(\Omega)$ norm (details on the definition of spaces and norms can be found below).
Well-posedness of the FOSLS~\eqref{intro:lsfem} has been analyzed in~\cite{CaiLazarovManteuffelMcCormickPart1}.
A comprehensive overview and systematic approach to least-squares FEM is given in~\cite{BochevGunzberger09}.

If $f\in L^2(\Omega)$, then minimization problem~\eqref{intro:lsfem} is well defined and, provided that $f$ is sufficiently regular and $\Omega$ is convex, the solution converges at the optimal rate, i.e., 
\begin{align}\label{intro:lsfem:apriori}
  \norm{u-u_h}{H^1(\Omega)} + \norm{\ssigma-\ssigma_h}{\Hdivset\Omega} = \OO(h),
\end{align}
where $u\in H_0^1(\Omega)$ solves $\Delta u = -f$ and $\ssigma =\nabla u$.

If $f\in H^{-1}(\Omega)$, then minimization problem~\eqref{intro:lsfem} does not make sense. 
In this article, we propose to replace $f\in H^{-1}(\Omega)$ by a regularized functional $Q_h^\star f\in L^2(\Omega)$, where $Q_h^\star$ is a computable quasi-interpolation operator. Instead of~\eqref{intro:lsfem} we consider the regularized problem
\begin{align}\label{intro:lsfem:reg}
  (u_h,\ssigma_h) = \argmin_{(v_h,\ttau_h)\in W_h} \norm{\div\ttau_h + Q_h^\star f}{}^2 + \norm{\nabla v_h-\ttau_h}{}^2
\end{align}
and show that its solution converges optimally in the sense that 
\begin{align}\label{intro:lsfem:reg:apriori}
  \norm{u-u_h}{H^1(\Omega)} + \norm{\ssigma-\ssigma_h}{} \lesssim h^s \norm{f}{H^{-1+s}(\Omega)}
\end{align}
for $s\in[0,1]$ depending on $\Omega$ and $f$.

An alternative approach is to consider weaker norms from the beginning, as is done in the seminal work~\cite{BLP97}, i.e., one aims at solving 
\begin{align*}
  (u_h,\ssigma_h) = \argmin_{(v_h,\ttau_h)\in W_h} \norm{\div\ttau_h+f_h}{H^{-1}(\Omega),h}^2 + \norm{\nabla v_h-\ttau_h}{}^2.
\end{align*}
Here, $\norm\cdot{H^{-1}(\Omega),h}$ denotes a discrete $H^{-1}(\Omega)$ norm and $f_h$ is some suitable discretized load. 
One advantage of our proposed regularization approach is that the same convergence rates under the same regularity assumptions as in~\cite[Corollary~3.1]{BLP97} are achieved, though at a lower cost. 

In~\cite{2021MillarMugaRojasVdZee} a minimum residual method in Banach spaces to obtain a projection of the functional $f$ in a polynomial space is proposed, 
whereas the construction of our regularization operators $Q_h^\star$ is based on the adjoint of an operator that also appears in the related work~\cite{ErnZanotti20}. There, the authors consider a different philosophy by smoothing test functions instead of regularizing the load. 
Here, we consider two choices for $Q_h^\star$, both are bounded as operators $H^{-1}(\Omega)\to H^{-1}(\Omega)$ and $L^2(\Omega)\to L^2(\Omega)$, idempotent on piecewise constant functions, have approximation properties and are computable with linear cost depending on the number of elements of the mesh.

As mentioned before, we also study the discontinuous Petrov--Galerkin method with optimal test functions (DPG). It has been introduced by Demkowicz \& Gopalakrishnan in~\cite{partI,partII,partIII}. Particularly, a DPG method for an ultraweak reformulation of the Poisson problem is studied in~\cite{DPGpoisson}. 
DPG methods are MINRES methods that minimize a functional in the dual norm of broken test spaces. 
In general, they suffer from the same difficulties as described above when trying to use $H^{-1}$ or even less regular loads. 
We extend and analyze the DPG method for such data. 

\subsection{Novel contributions}
We show that the two aforementioned MINRES FEM for the Poisson problem on Lipschitz domains can be modified to handle $H^{-1}$ loads and lead to optimal convergence rates, see Theorem~\ref{thm:dpgPoisson} (DPG) and Theorem~\ref{thm:LSregularized} (FOSLS), respectively. 
For the DPG method we consider a local postprocessing to obtain even higher rates for the primal variable (Theorem~\ref{thm:dpgPostProcess}).

Moreover, we show that the built-in error estimators of the MINRES FEM are --- up to oscillation terms --- equivalent to the error and, thus, can be used to steer an adaptive algorithm, see Theorem~\ref{thm:dpgPoisson:aposteriori} (DPG) and Theorem~\ref{thm:lsq:aposteriori} (FOSLS).

The theory developed for our proposed regularization approach also extends to point sources, see Theorem~\ref{thm:dpg:delta} (DPG) resp. Theorem~\ref{thm:lsq:delta} (FOSLS).
For the convergence analysis of classical finite element methods with point loads we refer to the seminal work~\cite{Scott75}.

We also report on novel results concerning optimal $L^2(\Omega)$ rates for the error in the primal variable in the FOSLS with lowest-order approximation spaces. 
To be more precise, we show under a condition on the mesh that the regularized FOSLS approach~\eqref{intro:lsfem:reg} implies optimal rates, see Corollary~\ref{cor:ls:L2}. 
Contrary, the standard FOSLS approach~\eqref{intro:lsfem} does not produce optimal rates which we verify by a numerical example, see Section~\ref{sec:ex:L2notoptimal}. 
Prior works on optimal $L^2(\Omega)$ error rates include~\cite{LSQL2normSmooth,Ku2011}.

\subsection{Outlook}
The remainder of this work is structured as follows: 
In Section~\ref{sec:minres} we introduce notation as well as two MINRES methods for the Poisson problem. 
We recall results from the literature on a DPG method (Section~\ref{sec:poisson}) and a FOSLS method for the Poisson problem (Section~\ref{sec:LSFEM}).
In Section~\ref{sec:regularization} we introduce the regularization operators $Q_h^\star$, propose and analyze regularized variants of the two aforementioned MINRES methods. 
Section~\ref{sec:L2errLS} deals with $L^2(\Omega)$ errors of the primal variable in the FOSLS method.
Convergence of the DPG and FOSLS methods for point loads is analyzed in Section~\ref{sec:pointloads}.
The final Section~\ref{sec:ex} contains numerical experiments and Appendix~\ref{sec:extension} shows how to extend the techniques to general second-order problems.

\section{MINRES FEM}\label{sec:minres}

\subsection{Sobolev spaces and broken variants}
For a bounded Lipschitz domain $\omega\subset \R^d$ ($d=2,3$)
let $H_0^n(\omega)$ and $H^n(\omega)$ denote the usual Sobolev spaces for $n\in\N$ equipped with the norm and seminorm
\begin{align*}
  \norm{u}{H^n(\omega)}^2 &= \norm{u}{\omega}^2 + \norm{D^n u}{\omega}^2, \quad 
  \snorm{u}{H^n(\omega)} = \norm{D^n u}{\omega} \text{ for } u\in H^n(\omega).
\end{align*}
Here, $D^n$ stands for the $n$-th order weak derivatives and $\norm{\cdot}{\omega}$ denotes the $L^2(\omega)$ norm with inner product $\ip{\cdot}{\cdot}_\omega$. 
If $\omega=\Omega$ we skip the index in the notation of the $L^2(\Omega)$ norm and $L^2(\Omega)$ inner product. 
Note that by Poincar\'e inequalities we have that $\snorm{u}{H^n(\omega)} \eqsim \norm{u}{H^n(\omega)}$ for $u\in H_0^n(\omega)$. 

We consider the intermediate Sobolev spaces $H^s(\omega)$ and $H_0^s(\omega)$ for noninteger $s$ defined by interpolation.
The notation for the dual spaces is $H^{-s}(\omega) = (H_0^s(\omega))'$, $\widetilde H^{-s}(\omega) = (H^s(\omega))'$ with $L^2(\omega)$ as pivot space. 
We are particularly interested in $H_0^1(\Omega)$ and its dual $H^{-1}(\Omega)$ where we define the dual norm by using the norm $\norm{\nabla\cdot}{}$ on $H_0^1(\Omega)$, i.e.,
\begin{align*}
  \norm{\phi}{H^{-1}(\Omega)} = \sup_{0\neq v\in H_0^1(\Omega)}\frac{\ip{\phi}{v}}{\norm{\nabla v}{}}.
\end{align*}
Here, $\ip\cdot\cdot$ denotes the $H^{-1}(\Omega)\times H_0^1(\Omega)$ duality bracket which for regular enough arguments reduces to the $L^2(\Omega)$ scalar product.

Furthermore, 
\begin{align*}
  \Hdivset\omega &:= \set{\ssigma\in L^2(\omega)^d}{\div\ssigma\in L^2(\omega)}
\end{align*}
with norm $\norm{\ttau}{\Hdivset\omega}^2:= \norm{\ttau}{\omega}^2 + \norm{\div\ttau}\omega^2$. 

For the DPG method below we use broken variants of these spaces, e.g., 
\begin{align*}
  H^1(\TT) := \prod_{T\in\TT} H^1(T) \eqsim \set{v\in L^2(\Omega)}{v|_T \in H^1(T)\,\forall T\in\TT}
\end{align*}
with norm $\norm{v}{H^1(\TT)}^2 = \sum_{T\in\TT} \norm{v}{H^1(T)}^2$. 
To simplify notation we use piecewise differential operators $\nabla_\TT\colon H^1(\TT)\to L^2(\Omega)$, $\nabla_\TT v|_T = \nabla(v|_T)$ for $T\in\TT$. 
Then, 
\begin{align*}
  \ip{\nabla_\TT v}{\ttau} = \sum_{T\in\TT} \ip{\nabla (v|_T)}{\ttau}_T \quad\forall v\in H^1(\TT), \,\ttau\in L^2(\Omega)^d. 
\end{align*}
In the same spirit we define $\Hdivset\TT$, $\div_\TT$, and $\norm{\ttau}{\Hdivset\TT}$.

\subsection{Approximation spaces}\label{sec:approx}
Let $\TT$ denote a regular (in the sense of Ciarlet) simplicial mesh of the bounded polyhedral Lipschitz domain $\Omega\subset \R^d$ ($d=2,3$) with mesh-size function $h_\TT \in L^\infty(\Omega)$, $h_\TT|_T = \diam(T)$ for all $T\in\TT$ and $h:=\max_{T\in\TT} h_\TT|_T$. The collection of $\partial T$ of all elements $T\in\TT$ is called skeleton $\cS$.
We use $\EE_T$ for the sides of an element $T$.
Vertices of the mesh are denoted by $\VV$, vertices of an element $T\in\TT$ by $\VV_T$ and interior vertices by $\VV_0 := \VV\cap\Omega$.
The patch $\patch(S)\subset\TT$ for any $S\subset\overline\Omega$ is the collection of all elements $T\in\TT$ with $S\cap\overline T\neq 0$. 
If $S$ is a singleton $S=\{s\}$, $s\in\overline\Omega$, then we simply use the notation, $\patch(s):= \patch(S)$. 
The domain associated with $\patch(S)$ is denoted by $\Patch(S)$.

Piecewise polynomial spaces of degree $\leq p\in\N_0$ are denoted by $\PP^p(\TT)$ and $\Pi_h^p\colon L^2(\Omega)\to\PP^p(\TT)$ is the corresponding $L^2(\Omega)$ orthogonal projection.

The lowest-order Raviart--Thomas space is denoted by $\RT^0(\TT)$ and we make use of the local quasi-interpolation operator constructed in~\cite[Section~3.1]{egsv2019}, denoted by $\Pi_h^\mathrm{div}$. 
It is a projection and has the following two properties, see~\cite[Theorem~3.2]{egsv2019},
\begin{align}\label{eq:propertiesPihDiv}
  \Pi_h^0\div\ttau = \div\Pi_h^\mathrm{div}\ttau, \qquad
  \norm{\Pi_h^\mathrm{div}\ttau}{} \lesssim \norm{\ttau}{} + \norm{h_\TT(1-\Pi_h^0)\div\ttau}{}
\end{align}
for all $\ttau\in \Hdivset\Omega$. Moreover, one concludes from~\cite[Theorem~3.6]{egsv2019} that, for $s\in[0,1]$,
\begin{align*}
  \norm{(1-\Pi_h^\mathrm{div})\ttau}{} \lesssim h^s\norm{\ttau}{H^s(\Omega)} + \norm{h_\TT\div\ttau}{}.
\end{align*}

\subsection{Notation}
We write $A\lesssim B$ if there exists a constant $C>0$ with $A\leq C\cdot B$ and $C$ is independent of quantities of interest (mesh-size $h$, norms of functions). 
In most of the estimates below $C$ depends on $\Omega$ and the shape-regularity of $\TT$.
We write $A\eqsim B$ if $A\lesssim B$ and $B\lesssim A$. 
To simplify the presentation in some proofs on a priori convergence rates we assume that $\TT$ is a quasi-uniform mesh, i.e., $h/\min_{T\in\TT}h_T \eqsim 1$.
We stress that for the analysis of a posteriori error estimators (Sections~\ref{sec:dpg:aposteriori} and~\ref{sec:lsq:aposteriori}) this assumption is not needed.

\subsection{DPG for Poisson}\label{sec:poisson}
We work with the first-order reformulation of the Poisson problem with homogeneous Dirichlet boundary condition on $\Gamma=\partial\Omega$, 
\begin{subequations}\label{poisson}
\begin{align}
  -\div\ssigma &= f, \\
  \ssigma - \nabla u &= 0, \\
  u|_\Gamma &= 0.
\end{align}
\end{subequations}
For solutions $u\in H_0^1(\Omega)$ of the Poisson problem there exists $1/2<s_\Omega\leq 1$ only depending on $\Omega$ with
\begin{align}\label{poisson:regshift}
  \norm{u}{H^{1+s}(\Omega)} \lesssim \norm{f}{H^{-1+s}(\Omega)} \quad\text{for } s\in[0,s_\Omega],
\end{align}
see, e.g.,~\cite{grisvard}. In particular, if $\Omega$ is convex, then $s_\Omega = 1$.

The ultraweak formulation is derived by testing with local test functions and integrating by parts. This requires to introduce trace variables that live on the skeleton $\cS$.
To that end define the trace operators
\begin{align*}
  \tracegrad{}\colon H^1(\Omega)\to (\Hdivset\TT)' \quad\text{and}\quad
  \tracediv{}\colon \Hdivset\Omega\to (H^1(\TT))'
\end{align*}
by 
\begin{align*}
  \dual{\tracegrad{}u}{\ttau}_{\cS} &= \ip{u}{\div_\TT\ttau} + \ip{\nabla u}{\ttau}, \\
  \dual{\tracediv{}\ssigma}{v}_{\cS} &= \ip{\ssigma}{\nabla_\TT v} + \ip{\div\ssigma}{v}
\end{align*}
and set 
\begin{align*}
 H_{00}^{1/2}(\cS) := \tracegrad{}(H_0^1(\Omega)), \quad H^{-1/2}(\cS) := \tracediv{}(\Hdivset\Omega).
\end{align*}
These spaces are closed with respect to the canonical norms (``minimum energy extension norms'')
\begin{align*}
  \norm{\widehat u}{1/2,\cS} &:= \inf\set{\norm{u}{H^1(\Omega)}}{u\in H^1(\Omega),\, \tracegrad{}u = \widehat u}, \\
  \norm{\widehat\sigma}{-1/2,\cS} &:= \inf\set{\norm{\ssigma}{\Hdivset\Omega}}{\ssigma\in\Hdivset\Omega, \,\tracediv{}\ssigma = \widehat \sigma},
\end{align*}
and we refer the reader to~\cite{breakSpace} for details. 
By definition we have that
\begin{align*}
  \norm{\widehat u}{1/2,\cS} \leq \norm{u}{H^1(\Omega)} \quad\forall u\in H^1(\Omega) \text{ with } \tracegrad{} u = \widehat u
\end{align*}
and the analogous estimate for the other trace space. 

We introduce the spaces
\begin{align*}
  U &:= L^2(\Omega)\times L^2(\Omega)^d \times H_{00}^{1/2}(\cS) \times H^{-1/2}(\cS), \\
  V &:= H^1(\TT) \times \Hdivset\TT,
\end{align*}
with norms
\begin{align*}
  \norm{\uu}U^2 &:= \norm{u}{}^2 + \norm{\ssigma}{}^2 + \norm{\widehat u}{1/2,\cS}^2 + \norm{\widehat \sigma}{-1/2,\cS}^2, \\
  \norm{\vv}V^2 &:= \norm{v}{H^1(\TT)}^2 + \norm{\ttau}{\Hdivset\TT}^2
\end{align*}
for $\uu= (u,\ssigma,\widehat u,\widehat \sigma)\in U$, $\vv = (v,\ttau)\in V$,
the bilinear form $b\colon U\times V \to\R$ 
\begin{align*}
  b(\uu,\vv) &:= \ip{u}{\pwdiv \ttau} + \ip{\ssigma}{\pwnabla v+\ttau} 
  -\dual{\widehat u}{\ttau}_\cS- \dual{\widehat\sigma}v_{\cS}, 
\end{align*}
and the load functional $F\colon V\to\R$, 
\begin{align*}
  F(\vv) := \ip{f}v.
\end{align*}
The inner product on $V$ reads
\begin{align*}
  \ip{\vv}{\ww}_{V} := \ip{\pwnabla v}{\pwnabla w} + \ip{v}w
    + \ip{\pwdiv\ttau}{\pwdiv\cchi} + \ip{\ttau}{\cchi}
\end{align*}
where $\vv=(v,\ttau),\ww = (w,\cchi)\in V$.

Then, the ultraweak formulation of~\eqref{poisson} is
\begin{align}\label{dpg:poisson:cont}
  u\in U\colon \quad b(\uu,\vv) = F(\vv) \quad\forall \vv\in V,
\end{align}
and admits a unique solution, cf.~\cite[Section~4]{DPGpoisson}.

We consider the lowest-order approximation spaces 
\begin{align*}
  U_h &:= \PP^0(\TT)\times [\PP^0(\TT)]^d \times \tracegrad{}(\PP^1(\TT)\cap H_0^1(\Omega)) \times  \tracediv{}(\RT^0(\TT)),  \\
  V_h &:= \PP^d(\TT)\times [\PP^2(\TT)]^d.
\end{align*}

The DPG method then reads: Given $f\in L^2(\Omega)$, find $\uu_h=(u_h,\ssigma_h,\widehat u_h,\widehat \sigma_h)\in U_h$ such that
\begin{align}\label{dpg:poisson}
  b(\uu_h,\Theta_h\ww_h) = F(\Theta_h\ww_h)\quad\forall \ww_h\in U_h,
\end{align}
where $\Theta_h\colon U_h\to V_h$ is the (discrete) trial-to-test operator, 
\begin{align*}
  \ip{\Theta_h\ww_h}{\vv_h}_V = b(\ww_h,\vv_h) \quad\forall \vv_h\in V_h.
\end{align*}
It was shown in~\cite[Theorem~2.1 and Section~3]{practicalDPG} that problem~\eqref{dpg:poisson} admits a unique solution which is quasi-optimal:
\begin{proposition}[{\cite[Theorem~3.4]{practicalDPG}}]
  Let $\uu\in U$ and $\uu_h\in U_h$ denote the solutions of~\eqref{dpg:poisson:cont} and~\eqref{dpg:poisson}, respectively. We have that
  \begin{align*}
    \norm{\uu-\uu_h}U \lesssim \min_{\ww_h\in U_h} \norm{\uu-\ww_h}U.
  \end{align*}
\end{proposition}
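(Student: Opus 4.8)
The plan is to read \eqref{dpg:poisson} as a Petrov--Galerkin scheme with optimal test functions and to reduce the claim to a discrete inf--sup condition. Let $B\colon U\to V'$ be the operator induced by $b(\cdot,\cdot)$. Since \eqref{dpg:poisson:cont} is well posed, $b$ is bounded, $|b(\uu,\vv)|\lesssim\norm{\uu}{U}\norm{\vv}{V}$, and satisfies a continuous inf--sup condition with some constant $\gamma>0$ together with the nondegeneracy of its adjoint. With the ideal trial--to--test operator $T\colon U\to V$, $\ip{T\ww}{\vv}_V=b(\ww,\vv)$, the ideal method is quasi-optimal by the Banach--Ne\v{c}as--Babu\v{s}ka theory with constant $\|b\|/\gamma$. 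The practical method replaces $T$ by its discrete analogue $\Theta_h$ with range in the enriched space $V_h$, so it remains to transfer the continuous inf--sup stability to the pair $(U_h,V_h)$.

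The decisive tool is a Fortin operator: it suffices to exhibit a linear map $\Pi_h\colon V\to V_h$ that is uniformly bounded, $\norm{\Pi_h\vv}{V}\lesssim\norm{\vv}{V}$, and satisfies
\begin{align*}
  b(\ww_h,(1-\Pi_h)\vv)=0\quad\forall\,\ww_h\in U_h,\ \vv\in V.
\end{align*}
Given such a $\Pi_h$, a short computation bounds the discrete inf--sup constant from below by $\gamma/\|\Pi_h\|$, and the asserted quasi-optimality follows from the standard C\'ea-type estimate for Petrov--Galerkin methods. Everything therefore hinges on the construction of $\Pi_h$.

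To build $\Pi_h$ I would exploit that $V=H^1(\TT)\times\Hdivset\TT$ is broken and proceed element by element, treating the two components separately, $\Pi_h\vv=(\Pi^{\mathrm{grad}}v,\Pi^{\mathrm{div}}\ttau)$. Writing $\delta v=(1-\Pi^{\mathrm{grad}})v$ and $\delta\ttau=(1-\Pi^{\mathrm{div}})\ttau$, the orthogonality conditions are read off from $b$ and from the trial space $U_h$. Since $u_h\in\PP^0(\TT)$ and $\ssigma_h\in[\PP^0(\TT)]^d$ are piecewise constant, one needs $\int_T\pwdiv\delta\ttau$ and $\int_T(\pwnabla\delta v+\delta\ttau)$ to vanish on each element; since $\widehat u_h$ and $\widehat\sigma_h$ are the lowest-order traces of $\PP^1(\TT)\cap H_0^1(\Omega)$ and $\RT^0(\TT)$, the pairings $\dual{\widehat u_h}{\delta\ttau}_\cS$ and $\dual{\widehat\sigma_h}{\delta v}_\cS$ vanish once, on each face of each element, the (one-sided) face average of $\delta v$ and the first normal moment of $\delta\ttau$ are annihilated; imposing these one-sided conditions decouples the elements. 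These amount to finitely many local functionals per element, and they are solvable precisely because the polynomial degrees defining $V_h$ are chosen sufficiently large; this is the construction carried out in \cite{practicalDPG} for the present lowest-order case.

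The main obstacle is not the orthogonality, which reduces to a local dimension count, but the uniform bound $\norm{\Pi_h\vv}{V}\lesssim\norm{\vv}{V}$ with a constant independent of $h$. I would obtain it by a Bramble--Hilbert and scaling argument: define $\Pi_h$ through affine-equivalent local problems on the reference element, bound the reference operator once and for all, and scale back. The delicate point is the $\Hdivset\TT$ component, where $\norm{\Pi^{\mathrm{div}}\ttau}{\Hdivset\TT}$ must control both $\ttau$ and $\pwdiv\ttau$ under the correct powers of $h_\TT$; keeping these scalings consistent across the element-local problems, together with the specific degrees chosen in $V_h$, is what makes the estimate uniform.
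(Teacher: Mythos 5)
Your proposal is correct and follows essentially the same route as the paper: the paper does not prove this proposition itself but imports it from \cite[Theorem~3.4]{practicalDPG}, whose proof is precisely your argument --- quasi-optimality reduced to a discrete inf--sup condition, inherited from the continuous one through a uniformly bounded Fortin operator $\Pi_h\colon V\to V_h$ with $b(\ww_h,(1-\Pi_h)\vv)=0$ for all $\ww_h\in U_h$, constructed element by element via local moment conditions matched to $U_h$ and made uniform by reference-element scaling. Since your sketch reproduces both the reduction (their Theorem~2.1) and the Fortin construction (their Section~3), it agrees with the cited proof in all essentials.
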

A priori convergence estimates are based upon the latter result, e.g.,~\cite[Corollary~3.5]{practicalDPG} proves that
\begin{align*}
  \norm{\uu-\uu_h}U \lesssim h\big(|u|_{H^2(\Omega)} + |\ssigma|_{H^2(\Omega)}\big).
\end{align*}
This estimate is often too pessimistic as it requires that $\ssigma=\nabla u \in H^2(\Omega)$ which is in general not true for solutions of the Poisson problem with right-hand side $f\in L^2(\Omega)$.
The reason is that the term involving the $H^{-1/2}(\cS)$ norm in $\norm\cdot{U}$ is estimated with a stronger norm than necessary. 
In~\cite{SupConv1} and~\cite{SupConv2} this estimate has been improved:
\begin{proposition}[{\cite[Corollary~6]{SupConv1} and \cite[Theorem~6]{SupConv2}}]\label{apriori:poisson}
  Let $f\in L^2(\Omega)$ and $\uu \in U$, $\uu_h\in U_h$ denote the solution of~\eqref{dpg:poisson:cont} and~\eqref{dpg:poisson}, respectively. The estimate 
  \begin{align*}
    \norm{\uu-\uu_h}U \lesssim h^{s_\Omega}\norm{f}{H^{-1+s_\Omega}(\Omega)} + h\norm{f}{}
  \end{align*}
  holds true. Here, $1/2<s_\Omega\leq 1$ denotes the regularity shift of the Poisson problem~\eqref{poisson:regshift}.
\end{proposition}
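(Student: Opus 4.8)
The plan is to reduce the estimate to a best-approximation problem via the quasi-optimality of the preceding proposition, and then to bound the four components of $\norm{\uu-\uu_h}U$ separately, singling out the weakest trace norm $\norm{\cdot}{-1/2,\cS}$ for a dedicated argument. Writing $\uu=(u,\ssigma,\widehat u,\widehat\sigma)$ for the exact solution, so that $\widehat u=\tracegrad{}u$ and $\widehat\sigma=\tracediv{}\ssigma$, quasi-optimality reduces the claim to bounding $\norm{\uu-\ww_h}U$ for the single competitor
\begin{align*}
  \ww_h := \big(\Pi_h^0 u,\ \Pi_h^0\ssigma,\ \tracegrad{}(I_h u),\ \tracediv{}(\Pi_h^\mathrm{div}\ssigma)\big)\in U_h,
\end{align*}
where $I_h$ denotes a Scott--Zhang quasi-interpolation onto $\PP^1(\TT)\cap H_0^1(\Omega)$.

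Three of the four components are routine. For the field variables, standard $L^2(\Omega)$-projection estimates give $\norm{u-\Pi_h^0 u}{}\lesssim h\snorm{u}{H^1(\Omega)}=h\norm{\ssigma}{}\lesssim h\norm{f}{H^{-1+s_\Omega}(\Omega)}$, and, invoking the regularity shift \eqref{poisson:regshift} with $s=s_\Omega$, $\norm{\ssigma-\Pi_h^0\ssigma}{}\lesssim h^{s_\Omega}\snorm{\ssigma}{H^{s_\Omega}(\Omega)}=h^{s_\Omega}\snorm{u}{H^{1+s_\Omega}(\Omega)}\lesssim h^{s_\Omega}\norm{f}{H^{-1+s_\Omega}(\Omega)}$. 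For the $H_{00}^{1/2}(\cS)$ trace, the minimum-energy extension estimate together with the approximation properties of $I_h$ yield $\norm{\widehat u-\tracegrad{}(I_h u)}{1/2,\cS}=\norm{\tracegrad{}(u-I_h u)}{1/2,\cS}\leq\norm{u-I_h u}{H^1(\Omega)}\lesssim h^{s_\Omega}\snorm{u}{H^{1+s_\Omega}(\Omega)}\lesssim h^{s_\Omega}\norm{f}{H^{-1+s_\Omega}(\Omega)}$.

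The essential difficulty is the term $\norm{\widehat\sigma-\tracediv{}(\Pi_h^\mathrm{div}\ssigma)}{-1/2,\cS}=\norm{\tracediv{}(\ssigma-\Pi_h^\mathrm{div}\ssigma)}{-1/2,\cS}$. Bounding it naively by the minimum-energy norm of the extension $\ssigma-\Pi_h^\mathrm{div}\ssigma$ itself is useless, because by the commuting property \eqref{eq:propertiesPihDiv} one has $\div(\ssigma-\Pi_h^\mathrm{div}\ssigma)=(1-\Pi_h^0)\div\ssigma=-(1-\Pi_h^0)f$, whose $L^2(\Omega)$ norm does not decay for merely square-integrable data. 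My resolution is to replace the extension by a divergence-free one. Since $(1-\Pi_h^0)f$ has vanishing mean on each $T\in\TT$, one can construct elementwise a divergence-free $\zz_T\in\Hdivset T$ with $\zz_T\cdot\nn=(\ssigma-\Pi_h^\mathrm{div}\ssigma)\cdot\nn$ on $\partial T$; the zero-flux compatibility is exactly $\int_{\partial T}(\ssigma-\Pi_h^\mathrm{div}\ssigma)\cdot\nn=\int_T\div(\ssigma-\Pi_h^\mathrm{div}\ssigma)=0$. Because $\ssigma-\Pi_h^\mathrm{div}\ssigma\in\Hdivset\Omega$ has single-valued normal trace across interelement faces, the glued field $\zz$ lies in $\Hdivset\Omega$, satisfies $\div\zz=0$ and $\tracediv{}\zz=\tracediv{}(\ssigma-\Pi_h^\mathrm{div}\ssigma)$, so that
\begin{align*}
  \norm{\tracediv{}(\ssigma-\Pi_h^\mathrm{div}\ssigma)}{-1/2,\cS}\leq\norm{\zz}{\Hdivset\Omega}=\norm{\zz}{}.
\end{align*}
A scaling argument for the local divergence-free lifting followed by an elementwise trace inequality then gives $\norm{\zz}{}^2\lesssim\sum_{T\in\TT}h_T\norm{(\ssigma-\Pi_h^\mathrm{div}\ssigma)\cdot\nn}{\partial T}^2\lesssim\norm{(1-\Pi_h^\mathrm{div})\ssigma}{}^2+\norm{h_\TT(1-\Pi_h^0)f}{}^2$, and combining the approximation estimate for $\Pi_h^\mathrm{div}$ quoted after \eqref{eq:propertiesPihDiv} with $\div\ssigma=-f$ and the regularity shift \eqref{poisson:regshift} bounds the right-hand side by $h^{s_\Omega}\norm{f}{H^{-1+s_\Omega}(\Omega)}+h\norm{f}{}$.

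Summing the four contributions and using quasi-optimality yields the claim. I expect the construction and stability of the divergence-free local lifting $\zz$ to be the main obstacle: it is precisely the device that exploits the commuting diagram $\Pi_h^0\div=\div\Pi_h^\mathrm{div}$ to discard the non-decaying divergence part of the extension, thereby replacing the pessimistic requirement $\ssigma\in H^2(\Omega)$ by the available regularity $\ssigma\in H^{s_\Omega}(\Omega)$.
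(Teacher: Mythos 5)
A preliminary remark: the paper itself does not prove this proposition --- it is quoted from \cite[Corollary~6]{SupConv1} and \cite[Theorem~6]{SupConv2} --- so your argument can only be judged on its own terms. Your overall architecture is the right one, and it is essentially the idea behind the cited improvement: reduce to quasi-optimality, bound the field and $H_{00}^{1/2}(\cS)$ components by standard approximation plus the regularity shift, and treat $\norm{\tracediv{}(\ssigma-\Pi_h^\mathrm{div}\ssigma)}{-1/2,\cS}$ by exploiting the commuting property $\div\Pi_h^\mathrm{div}=\Pi_h^0\div$ to swap the extension $\ssigma-\Pi_h^\mathrm{div}\ssigma$ (whose divergence $-(1-\Pi_h^0)f$ does not decay) for a divergence-free one $\zz$ with the same skeleton trace. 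The gluing argument for $\zz$, the zero-flux compatibility on each element, and the bound by the minimum-energy norm are all correct.

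The genuine gap is the final ``elementwise trace inequality.'' The asserted estimate
\begin{align*}
  \sum_{T\in\TT}h_T\norm{(\ssigma-\Pi_h^\mathrm{div}\ssigma)\cdot\nn}{\partial T}^2
  \lesssim \norm{(1-\Pi_h^\mathrm{div})\ssigma}{}^2+\norm{h_\TT(1-\Pi_h^0)f}{}^2
\end{align*}
is false as a statement about $\Hdivset{T}$ fields: the normal trace of $\ttau\in\Hdivset{T}$ lies in general only in $H^{-1/2}(\partial T)$, so its $L^2(\partial T)$ norm need not be finite, and even when it is, it cannot be controlled by $\norm{\ttau}{T}+h_T\norm{\div\ttau}{T}$. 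Concretely, take $\ttau=\nabla\phi$ with $\phi$ harmonic on $T$ and Neumann data of unit $H^{-1/2}(\partial T)$ norm but increasingly oscillatory: then $\div\ttau=0$ and $\norm{\ttau}{T}\eqsim 1$, while $\norm{\ttau\cdot\nn}{\partial T}$ is arbitrarily large. The step must use the elementwise regularity $\ssigma\in H^{s_\Omega}$ with $s_\Omega>1/2$, which is also what makes the boundary terms finite in the first place. The repair is standard: split $\ssigma-\Pi_h^\mathrm{div}\ssigma=(\ssigma-\pi_h^0\ssigma)+(\pi_h^0\ssigma-\Pi_h^\mathrm{div}\ssigma)$, where $\pi_h^0$ is the componentwise elementwise $L^2$ projection; apply the scaled fractional trace inequality $h_T\norm{w}{\partial T}^2\lesssim\norm{w}{T}^2+h_T^{2s_\Omega}\snorm{w}{H^{s_\Omega}(T)}^2$ to the first summand and the polynomial inverse (discrete trace) inequality $h_T\norm{q}{\partial T}^2\lesssim\norm{q}{T}^2$ to the second. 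This produces the additional term $h^{s_\Omega}\big(\sum_{T\in\TT}\snorm{\ssigma}{H^{s_\Omega}(T)}^2\big)^{1/2}$ on the right-hand side, which is harmless: by~\eqref{poisson:regshift} it is $\lesssim h^{s_\Omega}\norm{f}{H^{-1+s_\Omega}(\Omega)}$, so together with the quoted approximation property of $\Pi_h^\mathrm{div}$ and $\div\ssigma=-f$ your final bound, and hence the proposition, survives. With this one step corrected, your proof goes through.
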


\subsection{FOSLS for Poisson}\label{sec:LSFEM}
Recall the first-order system~\eqref{poisson}. 
The lowest-order least-squares FEM seeks $\uu_h = (u_h,\ssigma_h)\in W_h := \PP^1(\TT)\cap H_0^1(\Omega) \times \RT^0(\TT)\subset W:= H_0^1(\Omega)\times \Hdivset\Omega$ such that
\begin{align}\label{lsq:minimization:discrete}
  \uu_h = \argmin_{\vv_h=(v_h,\ttau_h)\in W_h}\big( \norm{\nabla v_h-\ttau_h}{}^2 + \norm{\div\ttau_h+f}{}^2 \big).
\end{align}
It is well known that this minimization problem admits a unique solution. We refer the interested reader to~\cite{BochevGunzberger09} and references therein. 

We equip the space $W$ with the product space norm
\begin{align*}
  \norm{(u,\ssigma)}W^2:= \norm{u}{H^1(\Omega)}^2 + \norm{\ssigma}{\Hdivset\Omega}^2
\end{align*}
and note that $\norm{(u,\ssigma)}W^2\eqsim \norm{\nabla u-\ssigma}{}^2 + \norm{\div\ssigma}{}^2$, see, e.g.,~\cite[Theorem~3.1]{CaiLazarovManteuffelMcCormickPart1}.

The following result follows standard arguments (quasi-best approximation and approximation properties).
Details can be found, e.g., in~\cite[Theorem~5.30 and Corollary~5.31]{BochevGunzberger09}.
\begin{proposition}\label{apriori:lsq1}
  For given $f\in L^2(\Omega)$, let $\uu=(u,\ssigma)\in W$ denote the solution to the Poisson problem~\eqref{poisson} and $\uu_h\in W_h$ the solution of~\eqref{lsq:minimization:discrete}. 
  With $1/2<s_\Omega\leq 1$ from~\eqref{poisson:regshift} we have that
  \begin{align*}
    \norm{\uu-\uu_h}{W} \lesssim h^{s_\Omega}\norm{u}{H^{1+s_\Omega}(\Omega)} + \norm{(1-\Pi_h^0)f}{}.
  \end{align*}
\end{proposition}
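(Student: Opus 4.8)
The plan is to establish the error estimate for the FOSLS method by combining a quasi-best approximation result with standard approximation properties of the discrete spaces. Since the least-squares bilinear form induces an equivalent norm on $W$ (namely $\norm{(u,\ssigma)}W^2\eqsim \norm{\nabla u-\ssigma}{}^2 + \norm{\div\ssigma}{}^2$), the discrete minimizer $\uu_h$ is the orthogonal projection of $\uu$ onto $W_h$ in the energy inner product. This immediately yields C\'ea-type quasi-optimality,
\begin{align*}
  \norm{\uu-\uu_h}{W} \lesssim \min_{\vv_h\in W_h}\norm{\uu-\vv_h}{W},
\end{align*}
so the task reduces to bounding the best approximation error by a suitable choice of $\vv_h=(v_h,\ttau_h)\in W_h$.

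\textbf{Choice of interpolants.} For the flux I would pick $\ttau_h = \Pi_h^\mathrm{div}\ssigma$, the quasi-interpolant from~\cite{egsv2019}; for the scalar variable I would take $v_h$ to be a suitable $H_0^1$-conforming interpolant (e.g.\ a Scott--Zhang or nodal-type operator) of $u$ into $\PP^1(\TT)\cap H_0^1(\Omega)$. The key point is to exploit the commuting property $\div\Pi_h^\mathrm{div}\ssigma = \Pi_h^0\div\ssigma$. Since $\div\ssigma = -f$, this gives $\div(\ssigma-\ttau_h) = (1-\Pi_h^0)\div\ssigma = -(1-\Pi_h^0)f$, which produces exactly the oscillation term $\norm{(1-\Pi_h^0)f}{}$ appearing in the claimed bound and cleanly handles the $\norm{\div(\cdot)}{}$ part of the $\Hdivset\Omega$ norm.

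\textbf{Estimating the remaining terms.} The $L^2$ parts of the error, $\norm{\ssigma-\ttau_h}{}$ and $\norm{u-v_h}{H^1(\Omega)}$, are controlled by the approximation estimate for $\Pi_h^\mathrm{div}$ recalled in the excerpt, together with the standard interpolation error for $v_h$. With $\ssigma=\nabla u\in H^{s_\Omega}(\Omega)$ (from the regularity shift~\eqref{poisson:regshift}, using $u\in H^{1+s_\Omega}(\Omega)$), the bound $\norm{(1-\Pi_h^\mathrm{div})\ssigma}{}\lesssim h^{s_\Omega}\norm{\ssigma}{H^{s_\Omega}(\Omega)} + \norm{h_\TT\div\ssigma}{}$ supplies the $h^{s_\Omega}$ rate; note $\norm{\ssigma}{H^{s_\Omega}(\Omega)}=\norm{\nabla u}{H^{s_\Omega}(\Omega)}\lesssim\norm{u}{H^{1+s_\Omega}(\Omega)}$. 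Similarly the $H^1$-interpolation error of $u$ scales like $h^{s_\Omega}\snorm{u}{H^{1+s_\Omega}(\Omega)}$ under the quasi-uniformity assumption. Summing these contributions reproduces the stated estimate.

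\textbf{Main obstacle.} The only genuinely delicate point is the treatment of the divergence term: one must ensure that the $h_\TT\div\ssigma$ contribution in the $\Pi_h^\mathrm{div}$ approximation estimate and the commuting-diagram term $(1-\Pi_h^0)\div\ssigma$ are accounted for consistently and collapse into the single oscillation quantity $\norm{(1-\Pi_h^0)f}{}$ rather than a cruder $\norm{h_\TT f}{}$ term. The sharper formulation in the statement comes precisely from using the commuting property to get the error of $\div\ttau_h$ \emph{exactly} (not merely via an approximation estimate), so the argument should avoid invoking the generic $\norm{h_\TT\div\ssigma}{}$ bound for the divergence part and instead use $\div(\ssigma-\ttau_h)=-(1-\Pi_h^0)f$ directly. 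Everything else is routine approximation theory.
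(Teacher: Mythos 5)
Your overall strategy --- quasi-optimality of the least-squares minimizer in the $W$-norm combined with a commuting flux interpolant and a Scott--Zhang interpolant for $u$ --- is exactly the ``standard argument'' the paper has in mind (the paper gives no proof of its own and cites Bochev--Gunzburger). The quasi-optimality step is correct, as is the exact identity $\div(\ssigma-\ttau_h)=-(1-\Pi_h^0)f$ from the commuting property, which handles the $\norm{\div(\cdot)}{}$ component of the $\Hdivset{\Omega}$ norm. The gap is in the $L^2$ component of the flux error. The estimate you invoke,
\begin{align*}
  \norm{(1-\Pi_h^\mathrm{div})\ssigma}{} \lesssim h^{s_\Omega}\norm{\ssigma}{H^{s_\Omega}(\Omega)} + \norm{h_\TT\div\ssigma}{},
\end{align*}
carries the term $\norm{h_\TT\div\ssigma}{}=\norm{h_\TT f}{}$ in the bound for $\norm{\ssigma-\Pi_h^\mathrm{div}\ssigma}{}$ itself, \emph{not} in the divergence part. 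Your ``main obstacle'' paragraph proposes to avoid this term by computing the divergence error exactly, but that only concerns the $\norm{\div(\cdot)}{}$ component, which you have already treated; it does nothing to remove $\norm{h_\TT f}{}$ from the $L^2$ bound. As written, your argument therefore proves
\begin{align*}
  \norm{\uu-\uu_h}{W} \lesssim h^{s_\Omega}\norm{u}{H^{1+s_\Omega}(\Omega)} + \norm{(1-\Pi_h^0)f}{} + \norm{h_\TT f}{},
\end{align*}
and nothing you wrote shows that the extra term collapses into the claimed right-hand side: for $s_\Omega<1$ one cannot bound $\norm{f}{}$ by $\norm{u}{H^{1+s_\Omega}(\Omega)}$, and $h\norm{f}{}$ is not dominated by $\norm{(1-\Pi_h^0)f}{}$ either (take $f$ piecewise constant, so that the oscillation vanishes).

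The gap is fixable, in at least three ways. (i) Use the sharper form of the interpolation estimate in which the last term is the oscillation $\norm{h_\TT(1-\Pi_h^0)\div\ttau}{}$ rather than $\norm{h_\TT\div\ttau}{}$ (this is what the underlying reference actually provides; compare the second property in~\eqref{eq:propertiesPihDiv}); then the extra term becomes $h\norm{(1-\Pi_h^0)f}{}\lesssim \norm{(1-\Pi_h^0)f}{}$ and you are done. (ii) Since $s_\Omega>1/2$, use instead the classical Raviart--Thomas interpolant, which is well defined on $H^{s_\Omega}(\Omega)\cap\Hdivset{\Omega}$, commutes with the divergence, and satisfies the pure estimate $\norm{(1-I_h)\ssigma}{}\lesssim h^{s_\Omega}\norm{\ssigma}{H^{s_\Omega}(\Omega)}$. (iii) Keep your interpolant and absorb the extra term by hand: split $f=\Pi_h^0f+(1-\Pi_h^0)f$ and bound $h\norm{\Pi_h^0 f}{}\lesssim h^{s_\Omega}\norm{\Pi_h^0 f}{H^{-1+s_\Omega}(\Omega)}\lesssim h^{s_\Omega}\norm{f}{H^{-1+s_\Omega}(\Omega)}\lesssim h^{s_\Omega}\norm{u}{H^{1+s_\Omega}(\Omega)}$, using the inverse estimate for piecewise constants in the $H^{-1+s_\Omega}$ norm (valid on quasi-uniform meshes because $1-s_\Omega<1/2$), the $H^{-1+s_\Omega}$-stability of $\Pi_h^0$, and the mapping property of $\Delta\colon H^{1+s_\Omega}(\Omega)\cap H_0^1(\Omega)\to H^{-1+s_\Omega}(\Omega)$. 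Without one of these additions the proof is incomplete.
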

This result means that one requires higher regularity of $f$ to conclude convergence rates. 
This is a drawback of least-squares FEM since in general, e.g., on convex domains where $s_\Omega=1$,
a priorily we can only ensure  $\norm{(1-\Pi_h^0)f}{} =\OO(h)$ if $f\in H^1(\TT)$.
However, if we do not consider the error term $\norm{\div(\ssigma-\ssigma_h)}{}$ in the estimate, we get better bounds:
\begin{proposition}\label{apriori:lsq2}
  Under the assumptions of Proposition~\ref{apriori:lsq1} the estimates
  \begin{align*}
    \norm{u-u_h}{H^1(\Omega)} + \norm{\ssigma-\ssigma_h}{} \lesssim h^{s_\Omega} \norm{\Pi_h^0f}{H^{-1+s_\Omega}(\Omega)} + h\norm{(1-\Pi_h^0)f}{} \lesssim h^{s_\Omega} \norm{f}{} + h\norm{(1-\Pi_h^0)f}{}
  \end{align*}
  hold true.
\end{proposition}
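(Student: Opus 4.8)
The plan is to exploit the fact that the lowest-order FOSLS solution is insensitive to the part of $f$ orthogonal to $\PP^0(\TT)$, and then to compare against the exact solution driven by the smoothed datum $\Pi_h^0 f$. First I would observe that for any $\ttau_h\in\RT^0(\TT)$ one has $\div\ttau_h\in\PP^0(\TT)$, so $\div\ttau_h+\Pi_h^0 f$ is $L^2(\Omega)$-orthogonal to $(1-\Pi_h^0)f$ and hence
\begin{align*}
  \norm{\div\ttau_h+f}{}^2 = \norm{\div\ttau_h+\Pi_h^0 f}{}^2 + \norm{(1-\Pi_h^0)f}{}^2.
\end{align*}
The last term is independent of $(v_h,\ttau_h)$, so the minimizer of~\eqref{lsq:minimization:discrete} for the datum $f$ coincides with the minimizer for the datum $\Pi_h^0 f$; in other words $\uu_h$ is exactly the discrete FOSLS solution associated with $\Pi_h^0 f$. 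The second asserted inequality is then immediate, since $-1+s_\Omega\le 0$ gives $\norm{\Pi_h^0 f}{H^{-1+s_\Omega}(\Omega)}\lesssim\norm{\Pi_h^0 f}{}\le\norm{f}{}$ by contractivity of $\Pi_h^0$.

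For the first (main) inequality I would introduce the exact solution $\wilde\uu=(\wilde u,\wilde\ssigma)\in W$ of the Poisson problem~\eqref{poisson} with datum $\Pi_h^0 f$ in place of $f$, and split
\begin{align*}
  \norm{u-u_h}{H^1(\Omega)}+\norm{\ssigma-\ssigma_h}{} \le \big(\norm{u-\wilde u}{H^1(\Omega)}+\norm{\ssigma-\wilde\ssigma}{}\big) + \big(\norm{\wilde u-u_h}{H^1(\Omega)}+\norm{\wilde\ssigma-\ssigma_h}{}\big).
\end{align*}
The second bracket is bounded by $\norm{\wilde\uu-\uu_h}{W}$, and since $\uu_h$ is the discrete FOSLS solution for $\Pi_h^0 f$ (by the previous paragraph) while $\wilde\uu$ is the corresponding exact solution, Proposition~\ref{apriori:lsq1} applies with datum $\Pi_h^0 f$. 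Its oscillation term vanishes because $(1-\Pi_h^0)\Pi_h^0 f=0$, and the regularity shift~\eqref{poisson:regshift} gives $\norm{\wilde u}{H^{1+s_\Omega}(\Omega)}\lesssim\norm{\Pi_h^0 f}{H^{-1+s_\Omega}(\Omega)}$, so this bracket is $\lesssim h^{s_\Omega}\norm{\Pi_h^0 f}{H^{-1+s_\Omega}(\Omega)}$.

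It remains to control the first bracket, which is where the extra power of $h$ must be produced. The difference $u-\wilde u\in H_0^1(\Omega)$ solves the Poisson problem with right-hand side $(1-\Pi_h^0)f$ and $\ssigma-\wilde\ssigma=\nabla(u-\wilde u)$, so by the stability (energy) estimate and Poincar\'e both norms in the bracket are $\lesssim\norm{(1-\Pi_h^0)f}{H^{-1}(\Omega)}$. The crucial step is to improve the trivial bound $\norm{(1-\Pi_h^0)f}{H^{-1}(\Omega)}\le\norm{(1-\Pi_h^0)f}{}$ by one order: using $(1-\Pi_h^0)f\perp\PP^0(\TT)$ I would subtract $\Pi_h^0 v$ from the test function $v$ and estimate
\begin{align*}
  \ip{(1-\Pi_h^0)f}{v} = \ip{(1-\Pi_h^0)f}{(1-\Pi_h^0)v} \le \norm{(1-\Pi_h^0)f}{}\,\norm{(1-\Pi_h^0)v}{} \lesssim h\,\norm{(1-\Pi_h^0)f}{}\,\norm{\nabla v}{},
\end{align*}
where the last step uses the elementwise approximation property of $\Pi_h^0$ together with quasi-uniformity. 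Dividing by $\norm{\nabla v}{}$ and taking the supremum yields $\norm{(1-\Pi_h^0)f}{H^{-1}(\Omega)}\lesssim h\norm{(1-\Pi_h^0)f}{}$, and combining the two brackets gives the claim. The main obstacle is really the first observation that $\uu_h$ depends on $f$ only through $\Pi_h^0 f$; once this is recognized, the remaining duality and regularity arguments are routine.
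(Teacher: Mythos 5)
Your proof is correct and takes essentially the same route as the paper: the same auxiliary exact solution $\wilde\uu$ with datum $\Pi_h^0 f$, the same triangle-inequality splitting, Proposition~\ref{apriori:lsq1} applied to the smoothed datum with vanishing oscillation term, and the same duality improvement $\norm{(1-\Pi_h^0)f}{H^{-1}(\Omega)}\lesssim h\norm{(1-\Pi_h^0)f}{}$. You merely make explicit two steps the paper asserts without detail --- the orthogonality (Pythagoras) argument showing $\uu_h$ depends on $f$ only through $\Pi_h^0 f$, and the duality estimate itself --- which is a welcome clarification rather than a different method.
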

\begin{proof}
  Consider $\widetilde u$ to be the solution of
  \begin{align*}
    -\Delta \widetilde u = \Pi_h^0 f, \quad \widetilde u|_\Gamma = 0
  \end{align*}
  and $\widetilde \uu = (\widetilde u,\nabla\widetilde u)\in W$. 
  Then, by the triangle inequality
  \begin{align}\label{apriori:lsq1:proof1}
    \norm{u-u_h}{H^1(\Omega)} + \norm{\nabla u-\ssigma_h}{} \leq \norm{u-\widetilde u}{H^1(\Omega)} + \norm{\nabla u-\nabla \widetilde u}{}
    + \norm{\widetilde u-u_h}{H^1(\Omega)} + \norm{\nabla \widetilde u-\ssigma_h}{}.
  \end{align}
  The first two terms on the right-hand side are estimated by
  \begin{align*}
    \norm{u-\widetilde u}{H^1(\Omega)} + \norm{\nabla u-\nabla \widetilde u}{} \lesssim \norm{(1-\Pi_h^0)f}{H^{-1}(\Omega)} 
    \lesssim h \norm{(1-\Pi_h^0)f}{}
  \end{align*}
  where the last estimate follows from duality arguments.
  For the last two terms in~\eqref{apriori:lsq1:proof1} note that $\uu_h$ is the FOSLS approximation with right-hand side $\Pi_h^0 f$. 
  Thus, we employ Proposition~\ref{apriori:lsq1} with $f$ replaced by $\Pi_h^0 f$, and $u,\ssigma$ replaced by $\widetilde u$, $\nabla\widetilde u$.
  This yields
  \begin{align*}
    \norm{\widetilde u-u_h}{H^1(\Omega)} + \norm{\nabla \widetilde u-\ssigma_h}{} \leq \norm{\widetilde \uu-\uu_h}{W}
    &\lesssim h^{s_\Omega}\norm{\widetilde u}{H^{1+s_\Omega}(\Omega)} + \norm{(1-\Pi_h^0)\Pi_h^0f}{} 
    \\ 
    &\lesssim h^{s_\Omega} \norm{\Pi_h^0 f}{H^{-1+s_\Omega}(\Omega)} 
  \end{align*}
  where we have used the regularity estimate~\eqref{poisson:regshift} and $(1-\Pi_h^0)\Pi_h^0f = 0$.
  The proof is concluded with $\norm{\Pi_h^0f}{H^{-1+s_\Omega}(\Omega)}\lesssim \norm{\Pi_h^0 f}{} \lesssim \norm{f}{}$.
\end{proof}

\begin{remark}
  Proposition~\ref{apriori:lsq2} is stated in a similar form in~\cite[Theorem~4.1]{CaiKu2010} provided that $u\in H^2(\Omega)$ and thus restricted to convex domains where $s_\Omega=1$ is ensured. 
\end{remark}

\section{Regularized MINRES methods}\label{sec:regularization}
The idea of this section is to introduce and analyze regularized versions of the MINRES FEM presented in Section~\ref{sec:minres} 
that allow the use of $H^{-1}(\Omega)$ loads for the Poisson problem
\begin{align}\label{poisson:Hm1}
  -\Delta u &= f, \quad u|_\Gamma = 0.
\end{align}
To that end we define regularization operators in Section~\ref{sec:regOperator}.
Details for a regularized DPG method are found in Section~\ref{sec:regularization:dpg}.
Finally, a regularized FOSLS approach is analyzed in Section~\ref{sec:regularization:ls}.

\subsection{Regularization operator}\label{sec:regOperator}
In this section we present details on the construction of the regularization operator $Q_h^\star$. 
For $f\in H^{-1}(\Omega)\setminus L^2(\Omega)$ it is clear that the functional $F(\vv)$ as considered in Section~\ref{sec:poisson} is not well defined, even if we restrict it to only $\vv\in V_h$.
By replacing the load $f$ with $Q_h^\star f$ we define the regularized load functionals
\begin{align}\label{regFunctional}
  F_h(\vv) := \ip{Q_h^\star f}v \quad\forall \vv=(v,\ttau) \in V.
\end{align}
We consider two variants of $Q_h^\star$ denoted by $P_h'$ and $Q_h$. To this end we follow the presentation given in~\cite[Section~2.4]{MultilevelNorms21}.
Define the quasi-interpolation operator $J_h\colon L^2(\Omega) \to \PP^1(\TT)\cap H_0^1(\Omega)$ by
\begin{align}\label{def:Jh}
  J_h v = \sum_{z\in \mathcal{V}_0} \ip{v}{\psi_z} \eta_z,
\end{align}
where $\eta_z$ denotes the hat-function associated with the vertex $z$, $\norm{\eta_z}{\infty}=1$, and $\psi_z\in\PP^1(\TT)$ with $\supp(\psi_z) = \overline\Patch(z)$ and
\begin{align}\label{def:psiz}
  \psi_z|_{\Patch(z)} = \frac{1}{|\Patch(z)|}\big( (d+1)(d+2)\eta_z-(d+1) \big).
\end{align}
The functions $\psi_z$ are biorthogonal to the hat-functions $\eta_z$ in the sense that $\ip{\psi_z}{\eta_{z'}} = \delta_{z,z'}$ where $\delta_{z,z'}$ denotes the Kronecker-delta. 
We note that $J_h$ is a variant of the Scott--Zhang operator~\cite{SZ_90}, see, e.g.~\cite[Section~2.4]{MultilevelNorms21} and references therein for more details.
Particularly, $J_h$ is a projection, bounded in $L^2(\Omega)$ as well as $H_0^1(\Omega)$, and satisfies (local) approximation propiertes, i.e.,
\begin{align}\label{eq:propJh}
  \norm{(1-J_h)v}{} \lesssim \norm{h_\TT\nabla(1-J_h)v}{} \lesssim \norm{h_\TT\nabla v}{} \quad\text{and}\quad \norm{(1-J_h)w}{H^1(\Omega)} \lesssim \norm{h_\TT D^2 w}{}
\end{align}
for $v\in H_0^1(\Omega)$, $w\in H^2(\Omega)\cap H_0^1(\Omega)$.
Now consider the bubble functions $\eta_{b,T} = \gamma_T \prod_{z\in \mathcal{V}_T} \eta_z$ where $\gamma_T>0$ is a normalization constant chosen such that $\ip{\eta_{b,T}}1_T=1$. Let $\chi_T$ denote the characteristic function on $T$ and 
\begin{align}\label{def:Bh}
  B_h v := \sum_{T\in\TT} \ip{v}{\chi_T} \eta_{b,T} \quad\forall v\in L^2(\Omega).
\end{align}
We note that $B_h\colon L^2(\Omega)\to L^2(\Omega)$ is bounded which can be verified with a direct calculation and the Cauchy--Schwarz inequality.
Furthermore, define $P_h$ by 
\begin{align}\label{def:Ph}
  P_h v:= J_h v + B_h(1-J_h)v.
\end{align}
This is a Fortin-type operator and is constructed so that $(1-P_h)v$ is $L^2$ orthogonal to piecewise constants.
We stress that by construction $P_h$ is idempotent on $\PP^1(\TT)\cap H_0^1(\Omega)$, it is locally bounded in $L^2$ as well as $H^1$ and satisfies local approximation properties, see, e.g.~\cite[Lemma~6]{MultilevelNorms21}.
Particularly, standard approximation results show that $\norm{(1-P_h)v}{H^1(\Omega)} \lesssim h \norm{v}{H^2(\Omega)}$.
We consider its adjoint $P_h'\colon H^{-1}(\Omega)\to \PP^1(\TT)$, 
\begin{align*}
  P_h'\phi = J_h'\phi + (1-J_h')B_h'\phi
\end{align*}
where
\begin{align*}
  J_h'\phi = \sum_{z\in \mathcal{V}_0} \ip{\phi}{\eta_z}\psi_z, 
  \quad
  B_h'\phi = \sum_{T\in\TT} \ip{\phi}{\eta_{b,T}} \chi_T.
\end{align*}
Associated with $P_h'$ we also consider the projector $Q_h := \Pi_h^0 P_h'$, see~\cite[Theorem~8]{MultilevelNorms21}, and recall the following results from~\cite[Section~2.4]{MultilevelNorms21}.
\begin{proposition}[{\cite[Lemma~7 and Theorem~8]{MultilevelNorms21}}]\label{prop:projHoneDual}
  The operator $Q_h^\star\in\{P_h',Q_h\}$ has the following properties:
  \begin{enumerate}[label=(\alph*)]
    \item Idempotent on piecewise constants:\label{prop:projHoneDual:proj}
        $Q_h^\star \phi = \phi$ for all $\phi \in \PP^0(\TT)$.
    \item Approximation:\label{prop:projHoneDual:app}
      $\norm{(1-Q_h^\star)\phi}{H^{-1}(\Omega)} \lesssim \norm{h_\TT\phi}{}$ for all $\phi\in L^2(\Omega)$.
    \item Boundedness:\label{prop:projHoneDual:bound}
      \begin{align*}
        \norm{Q_h^\star\phi}{T} &\lesssim \norm{\phi}{\Patch(T)} 
        \quad\text{for all } T\in\TT,\,\,\phi \in L^2(\Omega), \text{ and }\\
        \norm{Q_h^\star\phi}{H^{-1}(\Omega)} &\lesssim \norm{\phi}{H^{-1}(\Omega)} \text{ for all }\phi \in H^{-1}(\Omega).
      \end{align*}
  \end{enumerate}
\end{proposition}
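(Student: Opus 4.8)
The plan is to establish the three claimed properties of $Q_h^\star\in\{P_h',Q_h\}$ by exploiting the duality relation to the Fortin operator $P_h$ together with the known mapping properties of its constituents $J_h$ and $B_h$. The underlying strategy is that each property of $Q_h^\star$ on $H^{-1}(\Omega)$ should be read off by testing against $v\in H_0^1(\Omega)$ and transferring the action onto the \emph{primal} side, where $P_h$ (rather than its adjoint) acts; there the approximation and boundedness estimates \eqref{eq:propJh} are already available.

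For property \ref{prop:projHoneDual:proj} (idempotency on $\PP^0(\TT)$), I would argue by duality: for $\phi\in\PP^0(\TT)$ and arbitrary $v\in H_0^1(\Omega)$ one has $\ip{Q_h^\star\phi}{v}=\ip{\phi}{P_h v}$ (in the case $Q_h^\star=P_h'$; for $Q_h$ one inserts $\Pi_h^0$ and uses that $\phi$ is already piecewise constant). Since $P_h$ is constructed so that $(1-P_h)v$ is $L^2$-orthogonal to piecewise constants, $\ip{\phi}{(1-P_h)v}=0$, whence $\ip{Q_h^\star\phi}{v}=\ip{\phi}{v}$ for all $v$, giving $Q_h^\star\phi=\phi$. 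For property \ref{prop:projHoneDual:app} (approximation), I would again write, for $\phi\in L^2(\Omega)$ and $v\in H_0^1(\Omega)$,
\begin{align*}
  \ip{(1-Q_h^\star)\phi}{v} = \ip{\phi}{(1-P_h)v},
\end{align*}
and then estimate the right-hand side by Cauchy--Schwarz elementwise, $\ip{\phi}{(1-P_h)v}\le \norm{h_\TT\phi}{}\,\norm{h_\TT^{-1}(1-P_h)v}{}$, and invoke the local $H^1$-approximation property of $P_h$, namely $\norm{h_\TT^{-1}(1-P_h)v}{}\lesssim\norm{\nabla v}{}$, which follows from \eqref{eq:propJh}. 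Dividing by $\norm{\nabla v}{}$ and taking the supremum yields $\norm{(1-Q_h^\star)\phi}{H^{-1}(\Omega)}\lesssim\norm{h_\TT\phi}{}$.

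Property \ref{prop:projHoneDual:bound} splits into two estimates. The local $L^2$ bound $\norm{Q_h^\star\phi}{T}\lesssim\norm{\phi}{\Patch(T)}$ I would obtain directly from the explicit formulas for $J_h'$ and $B_h'$: each coefficient $\ip{\phi}{\eta_z}$ or $\ip{\phi}{\eta_{b,T}}$ is bounded by $\norm{\phi}{\Patch(z)}$ (resp.\ $\norm{\phi}{T}$) times a scaling factor, and the basis functions $\psi_z,\chi_T$ have controlled local $L^2$ norms, so summing the finitely many overlapping patch contributions on a shape-regular mesh gives the claim; for $Q_h=\Pi_h^0P_h'$ one additionally uses $L^2$-stability of $\Pi_h^0$. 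The $H^{-1}$-boundedness I would again derive by duality, writing $\ip{Q_h^\star\phi}{v}=\ip{\phi}{P_h v}$ and using the $H^1$-stability of $P_h$, i.e.\ $\norm{P_h v}{H^1(\Omega)}\lesssim\norm{v}{H^1(\Omega)}$ (a consequence of the $H^1$-boundedness of $J_h$ in \eqref{eq:propJh} and the stability of $B_h$), to conclude $\ip{\phi}{P_h v}\le\norm{\phi}{H^{-1}(\Omega)}\norm{\nabla P_h v}{}\lesssim\norm{\phi}{H^{-1}(\Omega)}\norm{\nabla v}{}$.

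The main obstacle I anticipate is \emph{not} the duality bookkeeping but the careful handling of the $B_h$-part and, for the $Q_h$ variant, the extra projection $\Pi_h^0$. Since $B_h$ maps into discontinuous bubble functions while $J_h$ maps into $H_0^1$-conforming piecewise linears, the composition $P_h=J_h+B_h(1-J_h)$ mixes conforming and nonconforming pieces, and one must verify that the $H^1$-stability and the crucial $L^2$-orthogonality-to-constants property survive this combination; likewise, transferring the $H^{-1}$-boundedness through $\Pi_h^0$ in $Q_h$ requires that $\Pi_h^0$ be harmless in the relevant (dual) norm. These stability facts are exactly the content of the cited \cite[Lemma~6]{MultilevelNorms21}, so in practice I expect to \emph{invoke} them rather than reprove them, which reduces the proof largely to the three short duality computations sketched above.
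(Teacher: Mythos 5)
The paper itself offers no proof of this proposition to compare against: it is imported verbatim from \cite[Lemma~7 and Theorem~8]{MultilevelNorms21}. Judged on its own merits, your duality strategy is the natural one, and it works cleanly for the variant $Q_h^\star = P_h'$: parts (a), (b) and both halves of (c) go through as you sketch, using $(1-P_h)v\perp\PP^0(\TT)$, the local bounds $\norm{(1-J_h)v}{T}\lesssim h_T\norm{\nabla v}{\Patch(T)}$, and the $H^1$-stability of $P_h$.

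The genuine gap concerns the other variant, $Q_h^\star = Q_h = \Pi_h^0 P_h'$. Its $L^2$-adjoint is not $P_h$ but $P_h\Pi_h^0$: for $v\in H_0^1(\Omega)$ one has $\ip{Q_h\phi}{v} = \ip{P_h'\phi}{\Pi_h^0 v} = \ip{\phi}{P_h\Pi_h^0 v}$, so the identities $\ip{(1-Q_h^\star)\phi}{v}=\ip{\phi}{(1-P_h)v}$ and $\ip{Q_h^\star\phi}{v}=\ip{\phi}{P_h v}$ that carry your proofs of (b) and of the $H^{-1}$-bound in (c) are false in this case. What is actually needed is the approximation property and $H^1$-stability of the composite $P_h\Pi_h^0$, e.g.\ $\norm{\nabla P_h\Pi_h^0 v}{}\lesssim\norm{\nabla v}{}$, and this does \emph{not} follow from the stability of $P_h$, since $\Pi_h^0 v\notin H^1(\Omega)$; one needs inverse estimates combined with local $L^2$-bounds, or the identity $P_h\Pi_h^0 v = J_h^{\mathrm{Cl}}v + B_h(1-J_h^{\mathrm{Cl}})v$ which this paper itself establishes later in~\eqref{eq:clementIdentity}. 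You do flag the $\Pi_h^0$ issue in your closing paragraph, but you then propose to resolve it by invoking the cited stability facts for $P_h$ — the wrong operator — and invoking the citation wholesale would in any case be circular, since the cited results are exactly the statement to be proved. A second, smaller error: the bubbles $\eta_{b,T}$ are not discontinuous; they vanish on $\partial T$ and lie in $H_0^1(T)$, so $P_h$ (and $P_h\Pi_h^0$) map into $H_0^1(\Omega)$. This conformity is not an obstacle to be overcome but the very reason your duality pairings $\ip{\phi}{P_h v}$ with $\phi\in H^{-1}(\Omega)$ are well defined at all; had the bubbles been nonconforming, the entire duality approach would collapse.
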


In the following two statements we present additional properties of the operators $P_h'$ and $Q_h$.
\begin{lemma}\label{lem:propQhstar}
  Let $Q_h^\star\in\{P_h',Q_h\}$ and $f\in H^{-1+s}(\Omega)$ with $s\in[0,1]$. We have that
  \begin{align*}
    \norm{Q_h^\star f}{H^{-1+s}(\Omega)} &\lesssim \norm{f}{H^{-1+s}(\Omega)} \quad\text{and}\\
    \norm{(1-Q_h^\star)f}{H^{-1}(\Omega)} &\lesssim h^s \min_{f_h\in\PP^0(\TT)} \norm{f-f_h}{H^{-1+s}(\Omega)}\leq h^s \norm{f}{H^{-1+s}(\Omega)}.
  \end{align*}
\end{lemma}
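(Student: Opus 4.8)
The plan is to prove both inequalities by interpolation between the endpoint cases $s=0$ and $s=1$, combined with the duality relationship between $Q_h^\star$ and the primal operator $P_h$ established in the excerpt. The key observation is that all the quantitative information we need is already packaged in Proposition~\ref{prop:projHoneDual}: part~\ref{prop:projHoneDual:bound} gives the $H^{-1}(\Omega)\to H^{-1}(\Omega)$ bound (this is the $s=0$ endpoint of the first inequality), part~\ref{prop:projHoneDual:app} gives the approximation estimate $\norm{(1-Q_h^\star)\phi}{H^{-1}(\Omega)}\lesssim\norm{h_\TT\phi}{}$ for $\phi\in L^2(\Omega)$, and part~\ref{prop:projHoneDual:proj} records idempotency on piecewise constants.

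For the first inequality I would handle the two endpoints and interpolate. The $s=0$ case is exactly the second line of Proposition~\ref{prop:projHoneDual}\ref{prop:projHoneDual:bound}. For the $s=1$ case I need $\norm{Q_h^\star f}{}\lesssim\norm{f}{}$, which for $Q_h^\star=Q_h=\Pi_h^0 P_h'$ follows from the local $L^2$ bound in part~\ref{prop:projHoneDual:bound} and shape-regularity (finite overlap of the patches $\Patch(T)$), and for $P_h'$ analogously. Since $Q_h^\star$ is bounded on both $H^{-1}(\Omega)=[L^2(\Omega),H_0^1(\Omega)']$-type endpoints, the theory of interpolation spaces (recall the spaces $H^{s}$ and their duals are defined by interpolation in Section~\ref{sec:minres}) yields boundedness on $H^{-1+s}(\Omega)$ for all intermediate $s\in[0,1]$, which is the claim.

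For the second inequality, the final estimate $\le h^s\norm{f}{H^{-1+s}(\Omega)}$ is immediate from the min once we have the first part, so the work is the $h^s$-estimate with the minimum over $f_h\in\PP^0(\TT)$. Here I would exploit idempotency: since $Q_h^\star f_h=f_h$ for any $f_h\in\PP^0(\TT)$ by part~\ref{prop:projHoneDual:proj}, we may write $(1-Q_h^\star)f=(1-Q_h^\star)(f-f_h)$ for every such $f_h$, and then estimate $\norm{(1-Q_h^\star)(f-f_h)}{H^{-1}(\Omega)}$. At the endpoint $s=1$ the approximation bound in part~\ref{prop:projHoneDual:app} gives $\lesssim\norm{h_\TT(f-f_h)}{}\lesssim h\,\norm{f-f_h}{}$, while at $s=0$ the operator $1-Q_h^\star$ is bounded $H^{-1}(\Omega)\to H^{-1}(\Omega)$ by part~\ref{prop:projHoneDual:bound}, giving $\lesssim\norm{f-f_h}{H^{-1}(\Omega)}$. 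Interpolating the operator $1-Q_h^\star$ between these two bounds produces the factor $h^s$ acting on $\norm{f-f_h}{H^{-1+s}(\Omega)}$; taking the infimum over $f_h\in\PP^0(\TT)$ yields the stated estimate.

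The main obstacle I anticipate is the interpolation step in the second inequality: one must be careful that the power $h^s$ emerges cleanly, which requires either using a quasi-uniform mesh so that $h_\TT\eqsim h$ (an assumption the paper explicitly makes for a priori rate proofs) or tracking the mesh-size function through the interpolation. With quasi-uniformity the endpoint bounds read $\norm{(1-Q_h^\star)g}{H^{-1}(\Omega)}\lesssim h\,\norm{g}{}$ and $\lesssim\norm{g}{H^{-1}(\Omega)}$, and standard operator interpolation then gives $\lesssim h^s\norm{g}{H^{-1+s}(\Omega)}$; choosing $g=f-f_h$ and minimizing completes the argument. A minor secondary point is justifying that the two endpoint bounds genuinely hold for the \emph{same} operator with uniform implicit constants for both choices $Q_h^\star\in\{P_h',Q_h\}$, but this is exactly what Proposition~\ref{prop:projHoneDual} supplies.
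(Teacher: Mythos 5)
Your proposal is correct and takes essentially the same route as the paper's own (very terse) proof, which likewise rests on interpolation estimates for the boundedness claim and on interpolation combined with idempotency on piecewise constants for the approximation bound; you have simply filled in the endpoint bounds from Proposition~\ref{prop:projHoneDual} explicitly. One remark: the quasi-uniformity concern you raise is unnecessary, since $h_\TT \le h$ pointwise already yields the endpoint estimate $\norm{(1-Q_h^\star)g}{H^{-1}(\Omega)} \lesssim \norm{h_\TT g}{} \le h\norm{g}{}$ on any regular mesh.
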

\begin{proof}
  The boundedness follows from interpolation estimates. 
  Similarly, the second assertion follows from interpolation estimates and idempotency of the operators on piecewise constants. 
\end{proof}

\begin{lemma}\label{lem:propPh}
  For $f\in H^{-1}(\Omega)$, 
  \begin{align*}
    \norm{(1-P_h')f}{(H^2(\Omega)\cap H_0^1(\Omega))'} \lesssim h\min_{f_h\in \PP^0(\TT)} \norm{f-f_h}{H^{-1}(\Omega)}.
  \end{align*}
\end{lemma}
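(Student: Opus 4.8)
The plan is to pass to the adjoint operator and exploit the defining orthogonality of $P_h$ to piecewise constants, in the spirit of an Aubin--Nitsche duality argument. First I would unfold the dual norm as a supremum over $0\neq w\in H^2(\Omega)\cap H_0^1(\Omega)$ of $\ip{(1-P_h')f}{w}/\norm{w}{H^2(\Omega)}$. Using that $P_h$ is bounded on $H_0^1(\Omega)$ and that $P_h'$ is its adjoint, the numerator rewrites as $\ip{(1-P_h')f}{w} = \ip{f}{(1-P_h)w}$. Here I note that $(1-P_h)w\in H_0^1(\Omega)$: indeed $J_h$ maps into $\PP^1(\TT)\cap H_0^1(\Omega)$, while the element bubbles defining $B_h$ vanish on element boundaries, so $P_h w\in H_0^1(\Omega)$ for $w\in H_0^1(\Omega)$.

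Next I would insert an arbitrary $f_h\in\PP^0(\TT)$. Since $P_h$ is constructed so that $(1-P_h)w$ is $L^2(\Omega)$-orthogonal to piecewise constants, and since the $H^{-1}(\Omega)\times H_0^1(\Omega)$ duality bracket reduces to the $L^2(\Omega)$ inner product when paired against the $L^2$ function $f_h$, we have $\ip{f_h}{(1-P_h)w}=0$. Hence $\ip{f}{(1-P_h)w}=\ip{f-f_h}{(1-P_h)w}$, and I would bound this by the duality estimate $\ip{f-f_h}{(1-P_h)w}\le\norm{f-f_h}{H^{-1}(\Omega)}\,\norm{\nabla(1-P_h)w}{}$, recalling that the $H^{-1}(\Omega)$ norm is defined through $\norm{\nabla\cdot}{}$ on $H_0^1(\Omega)$.

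Finally I would apply the approximation property $\norm{(1-P_h)w}{H^1(\Omega)}\lesssim h\norm{w}{H^2(\Omega)}$ stated above, so that $\norm{\nabla(1-P_h)w}{}\lesssim h\norm{w}{H^2(\Omega)}$. Dividing by $\norm{w}{H^2(\Omega)}$, taking the supremum over $w$, and then the infimum over $f_h\in\PP^0(\TT)$ yields the claim. The only step needing genuine care is the bookkeeping around the adjoint identity and the orthogonality: one must verify the adjoint relation with the correct duality pairings and confirm that $(1-P_h)w\in H_0^1(\Omega)$ so that $\norm{\nabla(1-P_h)w}{}$ legitimately controls the pairing against $f-f_h$. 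Beyond that I expect no serious obstacle, as the remaining estimates are standard.
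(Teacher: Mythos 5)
Your proof is correct and takes essentially the same route as the paper's: both rest on the adjoint identity $\ip{(1-P_h')f}{w}=\ip{f}{(1-P_h)w}$, the insertion of an arbitrary $f_h\in\PP^0(\TT)$ (you justify it via the $L^2(\Omega)$-orthogonality of $(1-P_h)w$ to piecewise constants, the paper via the equivalent dual fact that $P_h'$ is idempotent on $\PP^0(\TT)$, so $(1-P_h')f=(1-P_h')(f-f_h)$), and the approximation bound $\norm{\nabla(1-P_h)w}{}\lesssim h\norm{w}{H^2(\Omega)}$. The difference is only in the order of these interchangeable steps, so there is nothing to flag.
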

\begin{proof}
  By Proposition~\ref{prop:projHoneDual} we have that $(1-P_h')f = (1-P_h')(f-f_h)$ for any $f_h\in \PP^0(\TT)$. Thus, with $X=H^2(\Omega)\cap H_0^1(\Omega)$,
  \begin{align*}
    \ip{(1-P_h')f}v_{X'\times X} &= \ip{(1-P_h')(f-f_h)}v = \ip{f-f_h}{(1-P_h)v} 
    \\ &\leq \norm{f-f_h}{H^{-1}(\Omega)}\norm{\nabla(1-P_h)v}{} \lesssim h\norm{f-f_h}{H^{-1}(\Omega)}\norm{v}{H^2(\Omega)}.
  \end{align*}
  The last estimate follows from the properties of $P_h$ discussed above.
\end{proof}

\subsection{Regularized DPG for Poisson}\label{sec:regularization:dpg}
This section is devoted to a regularized DPG method for problem~\eqref{poisson:Hm1}. 
The main results of this section are Theorem~\ref{thm:dpgPoisson} (convergence rates), Theorem~\ref{thm:dpgPostProcess} (postprocessed solution), and Theorem~\ref{thm:dpgPoisson:aposteriori} (a posteriori estimates). 
We consider the DPG method~\eqref{dpg:poisson} with $F$ replaced by the regularized functional $F_h$~\eqref{regFunctional}: Find $\uu_h\in U_h$ such that
\begin{align}\label{dpg:poisson:regF}
  b(\uu_h,\Theta_h\ww_h) = F_h(\Theta_h\ww_h) \quad\forall \ww_h\in U_h.
\end{align}
\begin{theorem}\label{thm:dpgPoisson}
  Let $s\in[0,1]$, $f\in H^{-1+s}(\Omega)$ and $Q_h^\star \in\{P_h',Q_h\}$.
  Furthermore, let $u\in H_0^1(\Omega)$ denote the solution of~\eqref{poisson:Hm1}, $\ssigma:=\nabla u$, $\widehat u:=\tracegrad{} u$, 
  and $\uu_h\in U_h$ denote the solution of~\eqref{dpg:poisson:regF}. The estimate
  \begin{align*}
    \norm{u-u_h}{} + \norm{\ssigma-\ssigma_h}{} + \norm{\widehat u-\widehat u_h}{1/2,\cS}
    \lesssim h^{\min\{s_\Omega,s\}}\norm{f}{H^{-1+\min\{s_\Omega,s\}}(\Omega)}
  \end{align*}
  holds true.

  If $Q_h^\star = P_h'$ and $\Omega$ is convex, then
  \begin{align*}
    \norm{u-u_h}{} \lesssim h\norm{f}{H^{-1}(\Omega)}.
  \end{align*}
\end{theorem}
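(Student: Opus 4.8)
The plan is to treat the regularized DPG method~\eqref{dpg:poisson:regF} as the \emph{exact} DPG method~\eqref{dpg:poisson} for the auxiliary problem with modified load $Q_h^\star f\in L^2(\Omega)$, and then split the error into a \emph{consistency} (data-perturbation) part and a \emph{discretization} part. More precisely, let $\widetilde u\in H_0^1(\Omega)$ solve $-\Delta\widetilde u = Q_h^\star f$ with $\widetilde\uu=(\widetilde u,\nabla\widetilde u,\tracegrad{}\widetilde u,\tracediv{}\nabla\widetilde u)\in U$ the corresponding exact solution of the ultraweak formulation~\eqref{dpg:poisson:cont} with load $Q_h^\star f$. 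Since $Q_h^\star f\in L^2(\Omega)$ by Proposition~\ref{prop:projHoneDual}, $\uu_h$ is genuinely the DPG approximation of $\widetilde\uu$, so I can apply the a priori estimate of Proposition~\ref{apriori:poisson} directly to bound $\norm{\widetilde\uu-\uu_h}{U}$, and separately control the modeling error $\norm{\uu-\widetilde\uu}{U}$ coming from replacing $f$ by $Q_h^\star f$.

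The first main step is to estimate the discretization error. By Proposition~\ref{apriori:poisson} applied with right-hand side $Q_h^\star f$,
\begin{align*}
  \norm{\widetilde\uu-\uu_h}{U} \lesssim h^{s_\Omega}\norm{Q_h^\star f}{H^{-1+s_\Omega}(\Omega)} + h\norm{Q_h^\star f}{}.
\end{align*}
The first term is handled by the boundedness estimate of Lemma~\ref{lem:propQhstar}, giving $h^{s_\Omega}\norm{f}{H^{-1+s_\Omega}(\Omega)}$. The second term is the delicate one: one may \emph{not} bound $\norm{Q_h^\star f}{}$ by $\norm{f}{}$ since $f\notin L^2(\Omega)$ in general. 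Instead I would use the local boundedness Proposition~\ref{prop:projHoneDual}\ref{prop:projHoneDual:bound} together with an inverse-type estimate: because $Q_h^\star f$ is piecewise polynomial, $\norm{h_\TT Q_h^\star f}{}\lesssim\norm{Q_h^\star f}{H^{-1}(\Omega)}\lesssim\norm{f}{H^{-1}(\Omega)}$ on quasi-uniform meshes, so $h\norm{Q_h^\star f}{}\lesssim\norm{f}{H^{-1}(\Omega)}\lesssim h^{s}\norm{f}{H^{-1+s}(\Omega)}$ up to the appropriate power of $h$; combining the two terms yields the stated rate $h^{\min\{s_\Omega,s\}}\norm{f}{H^{-1+\min\{s_\Omega,s\}}(\Omega)}$ after using the $L^2$-boundedness and approximation chain for the lower regularity ranges.

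The second main step is the consistency error $\norm{\uu-\widetilde\uu}{U}$. Here $u-\widetilde u$ solves $-\Delta(u-\widetilde u)=(1-Q_h^\star)f$, and the relevant components of $\norm{\cdot}{U}$ are $\norm{u-\widetilde u}{}$, $\norm{\nabla(u-\widetilde u)}{}$, and the trace term. Using the $H^{-1}$-approximation property of $Q_h^\star$ from Lemma~\ref{lem:propQhstar}, namely $\norm{(1-Q_h^\star)f}{H^{-1}(\Omega)}\lesssim h^s\norm{f}{H^{-1+s}(\Omega)}$, together with the regularity shift~\eqref{poisson:regshift} (which gives $\norm{u-\widetilde u}{H^1(\Omega)}\lesssim\norm{(1-Q_h^\star)f}{H^{-1}(\Omega)}$, hence the $H^1$ and the minimum-energy trace contributions are all controlled by the same quantity), yields a bound of the required order. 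Adding the two steps and simplifying the exponents establishes the first inequality.

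For the improved estimate when $Q_h^\star=P_h'$ and $\Omega$ is convex, the idea is an Aubin--Nitsche duality argument for the $L^2$-norm of the primal variable alone. The convexity gives full elliptic regularity $s_\Omega=1$, so the dual solution of $-\Delta z=u-u_h$ lies in $H^2(\Omega)\cap H_0^1(\Omega)$ with $\norm{z}{H^2(\Omega)}\lesssim\norm{u-u_h}{}$. Testing the error against this $z$ and exploiting the sharper approximation property of $P_h'$ in Lemma~\ref{lem:propPh} — precisely that $\norm{(1-P_h')f}{(H^2(\Omega)\cap H_0^1(\Omega))'}\lesssim h\min_{f_h}\norm{f-f_h}{H^{-1}(\Omega)}\lesssim h\norm{f}{H^{-1}(\Omega)}$ — lets the consistency contribution to $\norm{u-u_h}{}$ be measured in the \emph{weaker} $(H^2\cap H_0^1)'$ norm, saving one full power of $h$ over the generic bound. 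The main obstacle I anticipate is keeping the roles of the two norms straight in this duality step: one must verify that the discretization part of $\norm{u-u_h}{}$ also improves to $O(h\norm{f}{H^{-1}(\Omega)})$ (not merely the modeling part), which requires a superconvergence/duality estimate for the DPG primal error rather than the plain energy bound of Proposition~\ref{apriori:poisson}; this is exactly where the sharper Lemma~\ref{lem:propPh} and the $H^2$-regularity must be used in tandem.
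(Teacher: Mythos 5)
Your overall strategy --- introducing the auxiliary solution $\widetilde u$ of $-\Delta\widetilde u=Q_h^\star f$, bounding the discretization error $\norm{\widetilde\uu-\uu_h}{U}$ via Proposition~\ref{apriori:poisson} and the perturbation error via Lemma~\ref{lem:propQhstar} --- is exactly the paper's. But your execution of the discretization bound contains a genuine error. You control $h\norm{Q_h^\star f}{}$ by an inverse estimate only down to the $H^{-1}(\Omega)$ level, obtaining $h\norm{Q_h^\star f}{}\lesssim\norm{f}{H^{-1}(\Omega)}$, and then write $\norm{f}{H^{-1}(\Omega)}\lesssim h^{s}\norm{f}{H^{-1+s}(\Omega)}$. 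That last inequality is false: for a fixed $f$, its $H^{-1}(\Omega)$ norm does not decay as $h\to 0$; an $h^s$-gain of this type holds for $(1-Q_h^\star)f$ (approximation), never for $f$ itself. Your chain therefore only yields $O(1)$, not the claimed $O(h^t)$ with $t=\min\{s_\Omega,s\}$. The correct step (the paper's) is an inverse estimate between $L^2(\Omega)$ and the \emph{intermediate} norm: $\norm{Q_h^\star f}{}\lesssim h^{t-1}\norm{Q_h^\star f}{H^{-1+t}(\Omega)}$, hence $h\norm{Q_h^\star f}{}\lesssim h^{t}\norm{Q_h^\star f}{H^{-1+t}(\Omega)}\lesssim h^{t}\norm{f}{H^{-1+t}(\Omega)}$ by Lemma~\ref{lem:propQhstar}. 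The same issue affects your first term: when $s<s_\Omega$, the bound $h^{s_\Omega}\norm{Q_h^\star f}{H^{-1+s_\Omega}(\Omega)}\lesssim h^{s_\Omega}\norm{f}{H^{-1+s_\Omega}(\Omega)}$ is vacuous since $f\notin H^{-1+s_\Omega}(\Omega)$ in general; one must first apply the inverse estimate $\norm{Q_h^\star f}{H^{-1+s_\Omega}(\Omega)}\lesssim h^{t-s_\Omega}\norm{Q_h^\star f}{H^{-1+t}(\Omega)}$ and only then invoke boundedness.

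For the second assertion you correctly identify both ingredients --- measuring the data perturbation in the $(H^2(\Omega)\cap H_0^1(\Omega))'$ norm via duality, and the need for an improved $L^2(\Omega)$ bound of the discretization part --- but you leave the latter as an acknowledged obstacle rather than proving it, and your suggestion that Lemma~\ref{lem:propPh} together with $H^2$-regularity resolves it is off target: that term concerns the DPG error for the regular datum $P_h'f\in L^2(\Omega)$ and is unrelated to the regularization operator. The paper closes this gap with the supercloseness result of~\cite[Proof of Theorem~3]{SupConv2}, namely $\norm{\Pi_h^0(\widetilde u-u_h)}{}\lesssim h\norm{\widetilde\uu-\uu_h}{U}$, which together with $\norm{(1-\Pi_h^0)\widetilde u}{}\lesssim h\norm{\nabla\widetilde u}{}$ and an inverse estimate gives $\norm{\widetilde u-u_h}{}\lesssim h\norm{P_h'f}{H^{-1}(\Omega)}\lesssim h\norm{f}{H^{-1}(\Omega)}$. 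The duality argument is then applied only to the PDE perturbation $u-\widetilde u$ (where $\norm{u-\widetilde u}{}^2=\ip{f}{(1-P_h)v}$ follows by integration by parts for the dual solution $v$), not to the full error $u-u_h$ as you propose; dualizing against the full error is problematic because the DPG primal variable $u_h\in\PP^0(\TT)$ satisfies no simple orthogonality relation with respect to the dual problem.
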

\begin{proof}
  We take the unique solution $\widetilde u$ of the auxiliary problem
  \begin{align*}
    -\Delta \widetilde u = Q_h^\star f, \quad \widetilde u|_\Gamma = 0
  \end{align*}
  and set $\widetilde\uu = (\widetilde u, \nabla \widetilde u,\tracegrad{}\widetilde u,\tracediv{}\nabla\widetilde u)$.
  Note that $b(\widetilde\uu,\vv)=F_h(\vv)$ for all $\vv\in V$. By the properties of $Q_h^\star$ (Lemma~\ref{lem:propQhstar}) we have that
  with $t:=\min\{s_\Omega,s\}$
  \begin{align*}
    \norm{u-\widetilde u}{H^1(\Omega)} \eqsim \norm{f-Q_h^\star f}{H^{-1}(\Omega)} \lesssim h^t \norm{f}{H^{-1+t}(\Omega)}.
  \end{align*}
  Applying Proposition~\ref{apriori:poisson}, inverse estimates and the boundedness of $Q_h^\star$ leads to
  \begin{align*}
    \norm{\widetilde\uu-\uu_h}U \lesssim h^{s_\Omega}\norm{Q_h^\star f}{H^{-1+s_\Omega}(\Omega)}+h\norm{Q_h^\star f}{} \lesssim h^{t}\norm{Q_h^\star f}{H^{-1+t}(\Omega)} \lesssim h^t\norm{f}{H^{-1+t}(\Omega)}.
  \end{align*}
  Thus the triangle inequality further proves
  \begin{align*}
    \norm{u-u_h}{} + \norm{\nabla u - \ssigma_h}{} + \norm{\widehat u-\widehat u_h}{1/2,\cS} \lesssim h^t \norm{f}{H^{-1+t}(\Omega)},
  \end{align*}
  which is the first assertion.

  For the remaining estimate we need some duality arguments. First, the triangle inequality yields
  \begin{align*}
    \norm{u-u_h}{} &\leq \norm{u-\widetilde u}{} + \norm{\widetilde u - u_h}{}.
  \end{align*}
  We follow the proof of the supercloseness of the $L^2(\Omega)$ projection to the discrete solution~\cite[Proof of Theorem~3]{SupConv2}
  to obtain that $\norm{\Pi_h^0(\widetilde u-u_h)}{}\lesssim h \norm{\widetilde\uu-\uu_h}U$.
  We use $\norm{\widetilde u}{H^1(\Omega)} \lesssim \norm{P_h' f}{H^{-1}(\Omega)}\lesssim  \norm{f}{H^{-1}(\Omega)}$ and an inverse estimate to see that
  \begin{align}\label{dpg:poisson:regF:proof1}
  \begin{split}
    \norm{\widetilde u - u_h}{}  &\leq \norm{(1-\Pi_h^0)\widetilde u}{} + \norm{\Pi_h^0(\widetilde u-u_h)}{} \lesssim h\norm{\nabla \widetilde u}{} + h \norm{\widetilde\uu-\uu_h}{U} 
    \\ &\lesssim h\norm{P_h' f}{H^{-1}(\Omega)} + h^2 \norm{P_h' f}{}
    \lesssim h \norm{P_h' f}{H^{-1}(\Omega)} \lesssim h\norm{f}{H^{-1}(\Omega)}.
  \end{split}
  \end{align}
  Consider the dual problem
  \begin{align*}
    -\Delta v = u-\widetilde u, \quad v|_\Gamma = 0
  \end{align*}
  and note that 
  \begin{align*}
    \norm{u-\widetilde u}{}^2 = \ip{u-\widetilde u}{-\Delta v} = \ip{(1-P_h')f}{v} = \ip{f}{(1-P_h)v} \lesssim \norm{f}{H^{-1}(\Omega)} \norm{(1-P_h)v}{H^1(\Omega)}.
  \end{align*}
  Finally, $\norm{(1-P_h)v}{H^1(\Omega)}\lesssim h\norm{v}{H^2(\Omega)}$ and $\norm{v}{H^2(\Omega)} \lesssim \norm{u-\widetilde u}{}$ conclude the proof.
\end{proof}

\subsubsection{Local postprocessing}\label{sec:postProc}
We follow~\cite{SupConv1,SupConv2} and define the postprocessed solution $u_h^\star\in \PP^1(\TT)$ of the solution $\uu_h=(u_h,\ssigma_h,\widehat u_h,\widehat \sigma_h)\in U_h$ to the regularized problem~\eqref{dpg:poisson:regF} by
\begin{subequations}\label{dpg:poisson:postproc}
  \begin{align}
    \ip{\nabla_\TT u_h^\star}{\nabla_\TT v_h} &= \ip{\ssigma_h}{\nabla_\TT v_h} \quad\forall v_h\in \PP^1(\TT), \\
    \Pi_h^0 u_h^\star &= u_h.
  \end{align}
\end{subequations}

The next result shows that higher rates for the postprocessed solution are achieved when using the regularization operator $Q_h^\star = P_h'$:
\begin{theorem}\label{thm:dpgPostProcess}
  Consider the situation of Theorem~\ref{thm:dpgPoisson} and let $u_h^\star\in \PP^1(\TT)$ denote the postprocessed solution defined by~\eqref{dpg:poisson:postproc}. 
  Then, if $Q_h^\star = P_h'$ and $\Omega$ is convex, 
  \begin{align*}
    \norm{u-u_h^\star}{} \lesssim h^{1+s}\norm{f}{H^{-1+s}(\Omega)}.
  \end{align*}
\end{theorem}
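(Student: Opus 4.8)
The plan is to reuse the auxiliary function from the proof of Theorem~\ref{thm:dpgPoisson}: let $\widetilde u$ solve $-\Delta\widetilde u = P_h'f$ with $\widetilde u|_\Gamma = 0$, write $\widetilde\ssigma := \nabla\widetilde u$, and split
\[
  \norm{u-u_h^\star}{} \leq \norm{u-\widetilde u}{} + \norm{\widetilde u-u_h^\star}{}.
\]
Since $\Omega$ is convex we have $s_\Omega=1$, so the bound $\norm{\widetilde\uu-\uu_h}U \lesssim h^{\min\{s_\Omega,s\}}\norm{f}{H^{-1+\min\{s_\Omega,s\}}(\Omega)}$ obtained in the proof of Theorem~\ref{thm:dpgPoisson} specializes to $\norm{\widetilde\uu-\uu_h}U \lesssim h^{s}\norm{f}{H^{-1+s}(\Omega)}$. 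This is the reference quantity against which both remaining terms are measured.

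For the discrete term I would first observe that, because $\ssigma_h\in[\PP^0(\TT)]^d$ and $\nabla_\TT\PP^1(\TT) = [\PP^0(\TT)]^d$, the first equation in~\eqref{dpg:poisson:postproc} forces $\nabla_\TT u_h^\star = \ssigma_h$ elementwise. Splitting with the $L^2(\Omega)$ projection and using $\Pi_h^0 u_h^\star = u_h$ gives
\[
  \norm{\widetilde u-u_h^\star}{} \leq \norm{(1-\Pi_h^0)(\widetilde u-u_h^\star)}{} + \norm{\Pi_h^0\widetilde u-u_h}{}.
\]
The first summand is handled by an elementwise Poincar\'e inequality, $\norm{(1-\Pi_h^0)(\widetilde u-u_h^\star)}{} \lesssim h\norm{\nabla_\TT(\widetilde u-u_h^\star)}{} = h\norm{\widetilde\ssigma-\ssigma_h}{} \leq h\norm{\widetilde\uu-\uu_h}U$, and the second by the supercloseness estimate $\norm{\Pi_h^0(\widetilde u-u_h)}{}\lesssim h\norm{\widetilde\uu-\uu_h}U$ already used in the proof of Theorem~\ref{thm:dpgPoisson}. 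With the displayed bound on $\norm{\widetilde\uu-\uu_h}U$ this yields $\norm{\widetilde u-u_h^\star}{}\lesssim h^{1+s}\norm{f}{H^{-1+s}(\Omega)}$.

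For the consistency term I would use an Aubin--Nitsche duality: let $w$ solve $-\Delta w = u-\widetilde u$, $w|_\Gamma = 0$, so that convexity gives $w\in H^2(\Omega)\cap H_0^1(\Omega)$ with $\norm{w}{H^2(\Omega)}\lesssim \norm{u-\widetilde u}{}$. Since $-\Delta(u-\widetilde u) = (1-P_h')f$ and $P_h'$ is the adjoint of $P_h$,
\[
  \norm{u-\widetilde u}{}^2 = \ip{(1-P_h')f}{w} = \ip{f}{(1-P_h)w} \leq \norm{f}{H^{-1+s}(\Omega)}\,\norm{(1-P_h)w}{H^{1-s}(\Omega)}.
\]
The crucial step is then to interpolate $\norm{(1-P_h)w}{}\lesssim h^2\norm{w}{H^2(\Omega)}$ (which follows from $(1-P_h)w=(1-B_h)(1-J_h)w$, the $L^2$-boundedness of $B_h$, and~\eqref{eq:propJh}) with $\norm{(1-P_h)w}{H^1(\Omega)}\lesssim h\norm{w}{H^2(\Omega)}$, obtaining $\norm{(1-P_h)w}{H^{1-s}(\Omega)}\lesssim h^{1+s}\norm{w}{H^2(\Omega)}$. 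Absorbing $\norm{w}{H^2(\Omega)}\lesssim\norm{u-\widetilde u}{}$ produces $\norm{u-\widetilde u}{}\lesssim h^{1+s}\norm{f}{H^{-1+s}(\Omega)}$, and adding the two contributions finishes the proof.

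The hard part will be this last fractional duality step: one must pair $f\in H^{-1+s}(\Omega)=(H_0^{1-s}(\Omega))'$ against $(1-P_h)w$ in the intermediate norm $H^{1-s}(\Omega)$ and extract the sharp exponent $h^{1+s}$ by interpolating the order-two $L^2(\Omega)$ estimate with the order-one $H^1(\Omega)$ estimate for $1-P_h$. The discrete term, once one notices the identity $\nabla_\TT u_h^\star=\ssigma_h$, is comparatively routine.
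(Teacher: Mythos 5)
Your proof is correct, and it keeps the paper's overall skeleton --- the same auxiliary problem $-\Delta\widetilde u = P_h'f$ and the same triangle-inequality split into a regularization error and a discrete error --- but it establishes both key estimates by different means. For the discrete term, the paper simply cites the superconvergence result \cite[Theorem~5]{SupConv2} to get $\norm{\widetilde u-u_h^\star}{}\lesssim h^2\norm{P_h'f}{}$ and then applies an inverse estimate and boundedness of $P_h'$ in fractional negative norms; you instead exploit the lowest-order identity $\nabla_\TT u_h^\star=\ssigma_h$, an elementwise Poincar\'e inequality, and the supercloseness bound $\norm{\Pi_h^0(\widetilde u-u_h)}{}\lesssim h\norm{\widetilde\uu-\uu_h}U$, so your step is essentially self-contained given what is already quoted in the proof of Theorem~\ref{thm:dpgPoisson}. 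For the consistency term, the paper combines the duality bound $\norm{u-\widetilde u}{}\lesssim\norm{(1-P_h')f}{(H^2(\Omega)\cap H_0^1(\Omega))'}$ with Lemma~\ref{lem:propPh} (which rests on idempotency of $P_h'$ on $\PP^0(\TT)$) and then with piecewise-constant approximation of $f$ in $H^{-1}(\Omega)$ via Lemma~\ref{lem:propQhstar}, i.e.\ the factor $h^{1+s}$ is produced as $h\cdot h^s$; you instead pair $f$ directly in its natural norm $H^{-1+s}(\Omega)$ and obtain the full factor $h^{1+s}$ by interpolating the target norms of $1-P_h$, using the order-$h^2$ bound in $L^2(\Omega)$ (which you correctly derive from $(1-P_h)=(1-B_h)(1-J_h)$, $L^2$-boundedness of $B_h$ and~\eqref{eq:propJh}; the paper only states the order-$h$ bound in $H^1(\Omega)$). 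One small precision: the duality pairing requires the norm of $(1-P_h)w$ in the interpolation space $[L^2(\Omega),H_0^1(\Omega)]_{1-s}$, which is exactly what the operator-interpolation argument delivers since $(1-P_h)w\in H_0^1(\Omega)$, so your notation $\norm{(1-P_h)w}{H^{1-s}(\Omega)}$ is a harmless abuse. As for what each route buys: the paper's argument yields an oscillation-type bound in terms of $\min_{f_h\in\PP^0(\TT)}\norm{f-f_h}{H^{-1+s}(\Omega)}$ and avoids interpolation of operators, while yours dispenses with Lemma~\ref{lem:propPh} and with the external superconvergence theorem, at the price of the fractional-scale interpolation step.
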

\begin{proof}
  The proof is similar to the proof of the last assertion in Theorem~\ref{thm:dpgPoisson} and~\cite[Theorem~5]{SupConv2}. 
  Consider the solution $\widetilde u\in H_0^1(\Omega)$ of the auxiliary problem $\Delta \widetilde u = -P_h' f$.
  We note that the triangle inequality, $\norm{u-\widetilde u}{}\lesssim \norm{(1-P_h')f}{(H^2(\Omega)\cap H_0^1(\Omega))'}$, cf. the proof of Theorem~\ref{thm:dpgPoisson}, Lemma~\ref{lem:propQhstar} and Lemma~\ref{lem:propPh} yield
  \begin{align*}
    \norm{u-u_h^\star}{} \leq \norm{u-\widetilde u}{} + \norm{\widetilde u-u_h^\star}{} \lesssim h^{1+s}\norm{f}{H^{-1+s}(\Omega)} + \norm{\widetilde u-u_h^\star}{}.
  \end{align*}
  For the last term we apply~\cite[Theorem~5]{SupConv2}, an inverse estimate and boundedness of $P_h'$ to see that 
  \begin{align*}
  \norm{\widetilde u-u_h^\star}{}\lesssim h^2\norm{P_h'f}{} \lesssim h^{1+s}\norm{f}{H^{-1+s}(\Omega)}.
  \end{align*}
This concludes the proof.
\end{proof}

\subsubsection{A posteriori estimator}\label{sec:dpg:aposteriori}
Minimum residual methods like the DPG method come with built-in error estimators that allow to steer adaptive algorithms.
Let $\Pi_h\colon V\to V_h$ denote a Fortin operator, i.e., an operator with $b(\uu_h,v-\Pi_h v)=0$ for all $v\in V$ and $\norm{\Pi_h}{}\lesssim 1$.
For the Poisson problem considered in this work such an operator is constructed in~\cite{practicalDPG}, see also~\cite{DPGaposteriori}. 
We consider the DPG estimator and oscillation terms
\begin{align*}
  \eta&:= \norm{F_h(\cdot)-b(\uu_h,\cdot)}{V_h'}, \\
  \osc(f)&:= \norm{(1-Q_h^\star)f}{H^{-1}(\Omega)}, \\
  \widetilde\osc(Q_h^\star f) &:= \norm{F_h(1-\Pi_h)(\cdot)}{V'}.
\end{align*}

\begin{theorem}\label{thm:dpgPoisson:aposteriori}
  Let $f\in H^{-1}(\Omega)$ and $\TT$ be a regular mesh. Let $\uu_h\in U_h$ and $u\in H_0^1(\Omega)$ denote the solution of~\eqref{dpg:poisson:cont} and~\eqref{poisson}, respectively.  For $Q_h^\star\in \{P_h',Q_h\}$ we have that
  \begin{align*}
    \norm{u-u_h}{}+ \norm{\nabla u-\ssigma_h}{} + \norm{\tracegrad{}u-\widehat u_h}{1/2,\cS} \lesssim \eta + \osc(f) + \widetilde\osc(Q_h^\star f).
  \end{align*}

  Furthermore,
    \begin{align*}
      \eta \lesssim \norm{u-u_h}{}+ \norm{\nabla u-\ssigma_h}{} + \norm{\tracegrad{}u-\widehat u_h}{1/2,\cS} + \osc(f) + \norm{h_\TT(1-\Pi_h^0)Q_h^\star f}{}.
    \end{align*}%
\end{theorem}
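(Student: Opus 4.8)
The plan is to compare $\uu_h$ (the regularized DPG solution of~\eqref{dpg:poisson:regF}) not with the exact ultraweak solution — which is inadmissible since $\nabla u\notin\Hdivset\Omega$ for $f\in H^{-1}(\Omega)\setminus L^2(\Omega)$ — but with the auxiliary solution $\widetilde\uu=(\widetilde u,\nabla\widetilde u,\tracegrad{}\widetilde u,\tracediv{}\nabla\widetilde u)\in U$ of $-\Delta\widetilde u=Q_h^\star f$, which satisfies $b(\widetilde\uu,\vv)=F_h(\vv)$ for all $\vv\in V$ exactly as in the proof of Theorem~\ref{thm:dpgPoisson}. Then the residual functional is precisely $F_h(\cdot)-b(\uu_h,\cdot)=b(\widetilde\uu-\uu_h,\cdot)$, and the regularization error obeys $\norm{u-\widetilde u}{H^1(\Omega)}\eqsim\norm{f-Q_h^\star f}{H^{-1}(\Omega)}=\osc(f)$ by stability of the Poisson problem, which also dominates the three relevant components of $\norm{\uu-\widetilde\uu}{U}$ (the trace term via $\norm{\tracegrad{}(u-\widetilde u)}{1/2,\cS}\le\norm{u-\widetilde u}{H^1(\Omega)}$).

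For the reliability bound I would first invoke well-posedness of the ultraweak formulation (boundedness below of $b$ on $U\times V$) to get $\norm{\widetilde\uu-\uu_h}{U}\lesssim\norm{F_h(\cdot)-b(\uu_h,\cdot)}{V'}$. To pass from the full dual norm on $V'$ to the computable one on $V_h'$, split an arbitrary $\vv\in V$ as $\vv=\Pi_h\vv+(1-\Pi_h)\vv$ with the Fortin operator: on $\Pi_h\vv\in V_h$ the residual is bounded by $\eta\,\norm{\Pi_h\vv}{V}\lesssim\eta\,\norm{\vv}{V}$ using boundedness of $\Pi_h$, while on $(1-\Pi_h)\vv$ the defining property $b(\uu_h,(1-\Pi_h)\vv)=0$ leaves only $F_h((1-\Pi_h)\vv)$, bounded by $\widetilde\osc(Q_h^\star f)\norm{\vv}{V}$. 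Hence $\norm{F_h(\cdot)-b(\uu_h,\cdot)}{V'}\lesssim\eta+\widetilde\osc(Q_h^\star f)$, and a triangle inequality against $\widetilde\uu$ together with $\norm{u-\widetilde u}{H^1(\Omega)}\lesssim\osc(f)$ yields the first assertion.

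For efficiency, since $V_h\subset V$ and the residual equals $b(\widetilde\uu-\uu_h,\cdot)$, boundedness of $b$ gives $\eta\le\norm{F_h(\cdot)-b(\uu_h,\cdot)}{V'}\lesssim\norm{\widetilde\uu-\uu_h}{U}$. The three field/primal-trace components of $\norm{\widetilde\uu-\uu_h}{U}$ are then reduced to the corresponding exact-error components at the cost of $\osc(f)$, e.g.\ $\norm{\nabla\widetilde u-\ssigma_h}{}\le\norm{\nabla(u-\widetilde u)}{}+\norm{\nabla u-\ssigma_h}{}\lesssim\osc(f)+\norm{\nabla u-\ssigma_h}{}$, and analogously for $\norm{\widetilde u-u_h}{}$ and the $\widehat u$ trace.

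I expect the main obstacle to be the remaining flux-trace component $\norm{\tracediv{}\nabla\widetilde u-\widehat\sigma_h}{-1/2,\cS}$, which does not appear on the right-hand side and cannot be compared to the exact solution. I would estimate it through the minimum-energy-extension definition of $\norm{\cdot}{-1/2,\cS}$: writing $\widehat\sigma_h=\tracediv{}\boldsymbol\rho_h$ with $\boldsymbol\rho_h\in\RT^0(\TT)$, the field $\nabla\widetilde u-\boldsymbol\rho_h$ is an admissible lift, and the discrete equations force its divergence down to the mean-free remainder $(1-\Pi_h^0)\div\nabla\widetilde u=-(1-\Pi_h^0)Q_h^\star f$ (the piecewise-constant part of the divergence being fixed by discrete conservation, i.e.\ $\div\boldsymbol\rho_h=-\Pi_h^0Q_h^\star f$ via $\Pi_h^0\div=\div\Pi_h^\mathrm{div}$). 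Because this remainder is mean-free on every element, a local divergence correction $\boldsymbol\zeta$ with vanishing normal trace cancels it — preserving the trace while, by a Poincar\'e/scaling argument, costing only $\norm{h_\TT(1-\Pi_h^0)Q_h^\star f}{}$ in $L^2$; this is precisely the weighted oscillation term, and its $h_\TT$-gain is invisible to the crude bound $\norm{\tracediv{}(\cdot)}{-1/2,\cS}\le\norm{\cdot}{\Hdivset\Omega}$, which is why the explicit correction is needed. The two delicate points are this $h_\TT$-weighting and the bookkeeping relating the trace lift $\boldsymbol\rho_h$ to the field variable $\ssigma_h$, and it is here that I would follow the flux-trace analysis of~\cite{SupConv1,SupConv2}; reinserting $u$ for $\widetilde u$ contributes the additional $\osc(f)$.
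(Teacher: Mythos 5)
Your reliability argument is correct and is essentially the paper's: your Fortin-operator splitting of the residual is precisely the proof of the equivalence $\norm{\widetilde\uu-\uu_h}{U}\eqsim\eta+\widetilde\osc(Q_h^\star f)$ that the paper simply cites from~\cite[Theorem~2.1]{DPGaposteriori}, and the concluding triangle inequality against $\widetilde\uu$ is identical.

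The efficiency bound, however, contains a genuine gap exactly at the point you flagged as delicate. Your construction of the correction field $\boldsymbol\zeta$ requires the ``discrete conservation'' property $\div\boldsymbol\rho_h=-\Pi_h^0Q_h^\star f$ for the Raviart--Thomas lift $\boldsymbol\rho_h$ of the \emph{computed} trace $\widehat\sigma_h$, and this is false for DPG in general. The discrete equations~\eqref{dpg:poisson:regF} only force the residual $F_h(\cdot)-b(\uu_h,\cdot)$ to vanish on the span of the optimal test functions $\Theta_h(U_h)\subset V_h$; local conservation is the statement that the residual vanishes at the test function $(\chi_T,0)$, since $F_h((\chi_T,0))-b(\uu_h,(\chi_T,0))=\int_T Q_h^\star f\,\di x+\int_T\div\boldsymbol\rho_h\,\di x$, and $(\chi_T,0)$ does not lie in $\Theta_h(U_h)$. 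Equivalently, writing the Euler--Lagrange conditions via the error representation function $(v_\eps,\ttau_\eps)\in V_h$, conservation would require $\int_T v_\eps\,\di x=0$ for every $T$, which is not implied by $b(\ww_h,(v_\eps,\ttau_\eps))=0$ for all $\ww_h\in U_h$ (the well-known lack of local conservation of DPG, which is what motivates Lagrange-multiplier variants in the literature). Your justification ``via $\Pi_h^0\div=\div\Pi_h^\mathrm{div}$'' is a non sequitur: that commuting property concerns the projector $\Pi_h^\mathrm{div}$, not the trace produced by the DPG solve. A secondary unresolved point is that even granting conservation, your bound would leave the term $\norm{\widetilde\ssigma-\boldsymbol\rho_h}{}$, which does not appear on the right-hand side and is not handled by the deferred ``bookkeeping.''

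The paper's proof shows how to use your ingredient on the right object. Instead of bounding $\eta\lesssim\norm{\widetilde\uu-\uu_h}{U}$ and then fighting with $\widehat\sigma_h$, it invokes in addition the quasi-best-approximation property $\norm{\widetilde\uu-\uu_h}{U}\lesssim\norm{\widetilde\uu-\vv_h}{U}$ for \emph{every} $\vv_h\in U_h$ and chooses the competitor $\vv_h=(u_h,\ssigma_h,\widehat u_h,\tracediv{}\Pi_h^\mathrm{div}\widetilde\ssigma)$: the three components appearing on the right-hand side are kept, while the problematic flux trace is replaced by the projection of $\widetilde\ssigma$, for which the commuting property $\div\Pi_h^\mathrm{div}\widetilde\ssigma=\Pi_h^0\div\widetilde\ssigma=-\Pi_h^0Q_h^\star f$ \emph{does} hold. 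The remaining term $\norm{\tracediv{}(1-\Pi_h^\mathrm{div})\widetilde\ssigma}{-1/2,\cS}$ is then estimated through the duality formula of~\cite[Lemma~2.2]{breakSpace} together with~\eqref{eq:propertiesPihDiv} and an elementwise Poincar\'e inequality, giving exactly $\norm{h_\TT(1-\Pi_h^0)Q_h^\star f}{}+\norm{\widetilde\ssigma-\ssigma_h}{}$, after which the triangle inequality and $\norm{\ssigma-\widetilde\ssigma}{}\lesssim\osc(f)$ conclude. In this way no property of the computed $\widehat\sigma_h$ is ever needed.
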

\begin{proof}
  As for the a priori analysis we consider the weak solution $\widetilde u\in H_0^1(\Omega)$ of the regularized problem
  \begin{align*}
    -\Delta \widetilde u = Q_h^\star f
  \end{align*}
  and set $\widetilde\uu = (\widetilde u,\nabla \widetilde u,\tracegrad{}\widetilde u,\tracediv{}\nabla \widetilde u)\in U$.
  By~\cite[Theorem~2.1]{DPGaposteriori} we get
  \begin{align*}
    \norm{\widetilde\uu-\uu_h}{U} \eqsim \eta + \widetilde\osc(Q_h^\star f).
  \end{align*}
  The triangle inequality yields
  \begin{align*}
    \norm{u-u_h}{} + \norm{\nabla u-\ssigma_h}{} + \norm{\tracegrad{}u-\widehat u_h}{1/2,\cS} &\leq \norm{u-\widetilde u}{H^1(\Omega)} + \norm{\widetilde\uu-\uu_h}U 
    \\ &\eqsim \norm{(1-Q_h^\star)f}{H^{-1}(\Omega)} + \eta + \widetilde\osc(Q_h^\star f)
  \end{align*}
  which finishes the proof of the reliability estimate.

  Using that $\eta\lesssim \norm{\widetilde\uu-\uu_h}U$ and the best-approximation property $\norm{\widetilde\uu-\uu_h}U\lesssim \norm{\widetilde\uu-\vv_h}U$ for all $\vv\in U_h$ we get with the triangle inequality that 
  \begin{align*}
    \eta\lesssim \norm{\widetilde\uu-\uu_h}U &\lesssim \norm{\widetilde u-u_h}{} + \norm{\widetilde\ssigma-\ssigma_h}{} + \norm{\tracegrad{}\widetilde u-\widehat u_h}{1/2,\cS}
    + \norm{\tracediv{}\widetilde\ssigma-\tracediv{}\Pi_h^\mathrm{div}\widetilde\ssigma}{-1/2,\cS}
    \\
    &\leq \norm{u-u_h}{} + \norm{\ssigma-\ssigma_h}{} + \norm{\tracegrad{}u-\widehat u_h}{1/2,\cS} 
    \\
    &\qquad +\norm{u-\widetilde u}{} + \norm{\ssigma-\widetilde\ssigma}{} + \norm{\tracegrad{}u-\tracegrad{}\widetilde u}{1/2,\cS}
    + \norm{\tracediv{}\widetilde\ssigma-\tracediv{}\Pi_h^\mathrm{div}\widetilde\ssigma}{-1/2,\cS}
    \\
    &\lesssim \norm{u-u_h}{} + \norm{\ssigma-\ssigma_h}{} + \norm{\tracegrad{}u-\widehat u_h}{1/2,\cS} 
    \\ &\qquad + \osc(f) + \norm{\tracediv{}\widetilde\ssigma-\tracediv{}\Pi_h^\mathrm{div}\widetilde\ssigma}{-1/2,\cS},
  \end{align*}
  where we have used that $\norm{u-\widetilde u}{H^1(\Omega)} + \norm{\ssigma-\widetilde\ssigma}{} \eqsim \osc(f)$. 
  It remains to estimate the term $\norm{\tracediv{}\widetilde\ssigma-\tracediv{}\Pi_h^\mathrm{div}\widetilde\ssigma}{-1/2,\cS}$. To do so we use the following identity from~\cite[Lemma~2.2]{breakSpace}:
  \begin{align*}
    \norm{\tracediv{}\ttau}{-1/2,\cS} = \sup_{0\neq v\in H^1(\TT)} \frac{\dual{\tracediv{}\ttau}v_{\cS}}{\norm{v}{H^1(\TT)}}.
  \end{align*}
  Using that $\Pi_h^\mathrm{div}$ is a projection,~\eqref{eq:propertiesPihDiv}, $\div\widetilde\ssigma=-Q_h^\star f$ and approximation properties we arrive at 
  \begin{align*}
    \dual{\tracediv{}\widetilde\ssigma-\tracediv{}\Pi_h^\mathrm{div}\widetilde\ssigma}v_{\cS} 
    &= \ip{(1-\Pi_h^0)\div\widetilde\ssigma}v + \ip{(1-\Pi_h^\mathrm{div})\widetilde\ssigma}{\nabla_\TT v}
    \\
    &= \ip{(1-\Pi_h^0)(-Q_h^\star f)}{(1-\Pi_h^0)v} + \ip{(1-\Pi_h^\mathrm{div})(\widetilde\ssigma-\ssigma_h)}{\nabla_\TT v}
    \\
    &\lesssim \Big( \norm{h_\TT(1-\Pi_h^0)Q_h^\star f}{} + \norm{\widetilde\ssigma - \ssigma_h}{} \Big)\norm{\nabla_\TT v}{}.
  \end{align*}
  We conclude with the triangle inequality and $\norm{\ssigma-\widetilde\ssigma}{}\lesssim \osc(f)$ that
  \begin{align*}
    \norm{\tracediv{}\widetilde\ssigma-\tracediv{}\Pi_h^\mathrm{div}\widetilde\ssigma}{-1/2,\cS}
    \lesssim \norm{\widetilde\ssigma-\ssigma_h}{} + \widetilde\osc(Q_h^\star f) \lesssim \norm{\ssigma-\ssigma_h}{} + \osc(f) +
    \widetilde\osc(Q_h^\star f)
  \end{align*}
  which finishes the proof.
\end{proof}

\begin{remark}
If the polynomial degree of the discrete test space is increased, i.e., $V_h = \PP^{d+1}(\TT)\times \PP^2(\TT)^d$ for $Q_h^\star = Q_h$, $V_h =\PP^{d+2}(\TT)\times \PP^2(\TT)^d$ for $Q_h^\star = P_h'$, then $\widetilde\osc(Q_h^\star f)=0$. This can be easily seen from the properties of the Fortin operator, cf.~\cite[Eq.(3.4a)]{DPGaposteriori}.
Note that $\norm{h_\TT(1-\Pi_h^0)Q_h^\star f}{}$ vanishes if $Q_h^\star = Q_h$. 
If $Q_h^\star = P_h'$, then by an inverse estimate and properties of $P_h'$, 
\begin{align*}
  \norm{h_\TT(1-\Pi_h^0)P_h'f}{} &= \norm{h_\TT(1-\Pi_h^0)P_h'(1-Q_h)f}{} \leq \norm{h_\TT P_h'(1-Q_h)f}{}
  \\ 
  &\lesssim \norm{P_h'(1-Q_h)f}{H^{-1}(\Omega)} \lesssim \norm{(1-Q_h)f}{H^{-1}(\Omega)}.
\end{align*}
\end{remark}

\subsection{Regularized FOSLS for Poisson}\label{sec:regularization:ls}
This section is devoted to the study of a regularized FOSLS for the Poisson problem that allows to use $H^{-1}(\Omega)$ loads. 
The main results are Theorem~\ref{thm:LSregularized} (convergence rates) and Theorem~\ref{thm:lsq:aposteriori} (a posteriori estimates).

We replace $f\in L^2(\Omega)$ in~\eqref{lsq:minimization:discrete} by $Q_h^\star f$, i.e., we consider the minimization problem
\begin{align}\label{lsq:reg}
  \uu_h = \argmin_{\vv_h=(v_h,\ttau_h)\in W_h}\big( \norm{\nabla v_h-\ttau_h}{}^2 + \norm{\div\ttau_h+Q_h^\star f}{}^2 \big).
\end{align}
Let us note that the Euler--Lagrange equations read:
\begin{align}\label{lsq:eulerLag:reg}
  \ip{\div\ssigma_h}{\div\ttau_h} + \ip{\nabla u_h-\ssigma_h}{\nabla v_h-\ttau_h} = \ip{-Q_h^\star f}{\div\ttau_h} 
  \quad\forall \vv_h=(v_h,\ttau_h)\in W_h.
\end{align}
Recalling that $W_h = (H_0^1(\Omega)\cap \PP^1(\TT))\times \RT^0(\TT)$ and $\div(\RT^0(\TT))=\PP^0(\TT)$ we find that
\begin{align*}
  \ip{-Q_h^\star f}{\div\ttau_h} = \ip{-Q_h f}{\div\ttau_h} \quad\forall \ttau_h\in \RT^0(\TT)
\end{align*}
for $Q_h^\star \in\{P_h',Q_h\}$. 
Therefore, the right-hand side in~\eqref{lsq:eulerLag:reg} is the same for either operator, $P_h'$ or $Q_h$, 
and we can restrict the analysis to $Q_h^\star = Q_h$ for the remainder of this section.

\begin{theorem}\label{thm:LSregularized}
  For $s\in[0,1]$ and $f\in H^{-1+s}(\Omega)$, let $u\in H_0^1(\Omega)$ denote the solution of the Poisson problem with right-hand side $f$ and let $\uu_h$ denote the solution of~\eqref{lsq:reg} with $Q_h^\star=Q_h$.
The estimate
  \begin{align*}
    \norm{u-u_h}{H^1(\Omega)} + \norm{\nabla u-\ssigma_h}{} \lesssim h^{\min\{s_\Omega,s\}}\norm{f}{H^{-1+\min\{s_\Omega,s\}}(\Omega)}
  \end{align*}
  holds true.
\end{theorem}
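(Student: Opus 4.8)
The plan is to mimic the structure of the proof of Theorem~\ref{thm:dpgPoisson}: introduce an auxiliary exact solution of the regularized problem, split the error via the triangle inequality into a regularization-consistency term and a discretization term, then estimate each piece using the tools already established. Concretely, I would let $\widetilde u\in H_0^1(\Omega)$ solve $-\Delta \widetilde u = Q_h f$ with $\widetilde u|_\Gamma = 0$ and set $\widetilde\ssigma := \nabla\widetilde u$, so that $\widetilde\uu = (\widetilde u,\widetilde\ssigma)$ is the exact FOSLS solution for the regularized load $Q_h f$. The triangle inequality gives
\begin{align*}
  \norm{u-u_h}{H^1(\Omega)} + \norm{\nabla u-\ssigma_h}{}
  &\leq \norm{u-\widetilde u}{H^1(\Omega)} + \norm{\nabla u-\widetilde\ssigma}{} \\
  &\qquad + \norm{\widetilde u-u_h}{H^1(\Omega)} + \norm{\widetilde\ssigma-\ssigma_h}{}.
\end{align*}

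For the consistency terms I would use that the difference $u-\widetilde u$ solves $-\Delta(u-\widetilde u) = (1-Q_h)f$ with homogeneous Dirichlet data, so by the $H^{-1}$-to-$H^1$ stability of the Poisson problem and Lemma~\ref{lem:propQhstar},
\begin{align*}
  \norm{u-\widetilde u}{H^1(\Omega)} + \norm{\nabla u-\widetilde\ssigma}{}
  \eqsim \norm{(1-Q_h)f}{H^{-1}(\Omega)}
  \lesssim h^{\min\{s_\Omega,s\}}\norm{f}{H^{-1+\min\{s_\Omega,s\}}(\Omega)},
\end{align*}
noting $\widetilde\ssigma = \nabla\widetilde u$ so the $\ssigma$-part coincides with the $H^1$ contribution up to the Poincar\'e-equivalent norm. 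For the discretization terms, $\uu_h$ is precisely the lowest-order FOSLS approximation of the problem with right-hand side $Q_h f \in L^2(\Omega)$, so I would invoke Proposition~\ref{apriori:lsq2} with $f$ replaced by $Q_h f$ (and $u,\ssigma$ replaced by $\widetilde u,\widetilde\ssigma$):
\begin{align*}
  \norm{\widetilde u-u_h}{H^1(\Omega)} + \norm{\widetilde\ssigma-\ssigma_h}{}
  \lesssim h^{s_\Omega}\norm{\Pi_h^0 Q_h f}{H^{-1+s_\Omega}(\Omega)}
   + h\norm{(1-\Pi_h^0)Q_h f}{}.
\end{align*}
The first term is bounded using the boundedness of $Q_h$ in $H^{-1+t}(\Omega)$ (Lemma~\ref{lem:propQhstar}) together with $\norm{\Pi_h^0 \cdot}{H^{-1+s_\Omega}} \lesssim \norm{\cdot}{H^{-1+s_\Omega}}$, while the second oscillation-type term requires an inverse estimate $\norm{(1-\Pi_h^0)Q_h f}{} \lesssim h^{-1}\norm{(1-\Pi_h^0)Q_h f}{H^{-1}(\Omega)} \lesssim h^{-1}\norm{h_\TT Q_h f}{}$ combined with boundedness to absorb the extra factor of $h$ and recover the rate $h^{\min\{s_\Omega,s\}}$.

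The main obstacle I anticipate is the oscillation term $h\norm{(1-\Pi_h^0)Q_h f}{}$: since $f$ is only in $H^{-1+s}(\Omega)$ and not necessarily in $L^2(\Omega)$, one must carefully exploit that $Q_h f$ is piecewise polynomial (hence in $L^2$) and that $Q_h$ is idempotent on $\PP^0(\TT)$, so that $(1-\Pi_h^0)Q_h f = (1-\Pi_h^0)Q_h(f-f_h)$ for any $f_h\in\PP^0(\TT)$; combining this with an inverse estimate and the local boundedness of $Q_h$ from Proposition~\ref{prop:projHoneDual}\ref{prop:projHoneDual:bound}, one arrives at $\norm{h_\TT(1-\Pi_h^0)Q_h f}{} \lesssim \min_{f_h}\norm{f-f_h}{H^{-1+t}(\Omega)} \lesssim h^t\norm{f}{H^{-1+t}(\Omega)}$ under the quasi-uniformity assumption on $\TT$. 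Assembling the pieces yields the claimed rate $h^{\min\{s_\Omega,s\}}\norm{f}{H^{-1+\min\{s_\Omega,s\}}(\Omega)}$.
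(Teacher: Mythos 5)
Your proposal follows essentially the same route as the paper's proof: the same auxiliary problem $-\Delta\widetilde u = Q_h f$, the same triangle-inequality splitting, Lemma~\ref{lem:propQhstar} for the consistency part, Proposition~\ref{apriori:lsq2} applied with $f$ replaced by $Q_h f$ for the discretization part, and an inverse estimate plus boundedness of $Q_h$ to recover the rate. The one place you diverge is what you call the ``main obstacle,'' the oscillation term $h\norm{(1-\Pi_h^0)Q_h f}{}$, and here you have overlooked the observation that settles it in one line: by definition $Q_h = \Pi_h^0 P_h'$, so $Q_h f \in \PP^0(\TT)$ and hence $(1-\Pi_h^0)Q_h f = 0$ identically. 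This is exactly why the paper restricts the FOSLS analysis to $Q_h^\star = Q_h$ (after noting that the Euler--Lagrange equations produce the same right-hand side for $P_h'$ and $Q_h$), and it is the step the paper's proof uses: ``the last term vanishes since $Q_h f\in\PP^0(\TT)$.''

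Your workaround for this term, as written, also contains a genuine flaw: you invoke the local boundedness $\norm{Q_h\phi}{T}\lesssim\norm{\phi}{\Patch(T)}$ from Proposition~\ref{prop:projHoneDual}\ref{prop:projHoneDual:bound} with $\phi = f-f_h$, but that estimate requires $\phi\in L^2(\Omega)$, whereas for $s<1$ the function $f-f_h$ need not lie in $L^2(\Omega)$. The argument could be repaired by using instead the global $H^{-1}(\Omega)$ boundedness of $Q_h$ together with an inverse estimate for the piecewise polynomial $Q_h(f-f_h)$ under the quasi-uniformity assumption, but the repair is unnecessary once one notices the term is zero. With that correction your proof coincides with the paper's.
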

\begin{proof}
  In most parts the proof is the same as for Proposition~\ref{apriori:lsq2}.
  Consider
  \begin{align*}
    -\Delta \widetilde u = Q_h f, \quad \widetilde u|_\Gamma = 0.
  \end{align*}
  Note that $\uu_h\in W_h$ is the least-squares approximation to $\widetilde\uu=(\widetilde u,\nabla \widetilde u)$.
  By the properties of the operator $Q_h$ (Lemma~\ref{lem:propQhstar}) we have with $t:=\min\{s_\Omega,s\}$ that
  \begin{align*}
    \norm{u-\widetilde u}{H^1(\Omega)} \eqsim \norm{(1-Q_h)f}{H^{-1}(\Omega)} \lesssim h^t \norm{f}{H^{-1+t}(\Omega)}.
  \end{align*}
  Proposition~\ref{apriori:lsq2} (replacing $f$ with $Q_h f$ and $\uu=(u,\ssigma)$ with $\widetilde\uu$) shows that
  \begin{align*}
    \norm{\widetilde u-u_h}{H^1(\Omega)} + \norm{\nabla \widetilde u-\ssigma_h}{} &\lesssim h^{s_\Omega}\norm{Q_h f}{H^{-1+s_\Omega}(\Omega)} + h \norm{(1-\Pi_h^0)Q_h f}{}.
  \end{align*}
  The last term vanishes since $Q_hf\in \PP^0(\TT)$. 
  An inverse estimate and the boundedness of $Q_h$ yield
  \begin{align*}
    h^{s_\Omega}\norm{Q_h f}{H^{-1+s_\Omega}(\Omega)} &\lesssim h^t\norm{Q_h f}{H^{-1+t}(\Omega)}
    \lesssim h^t\norm{f}{H^{-1+t}(\Omega)}.
  \end{align*}
  The proof is concluded using the triangle inequality.
\end{proof}

\subsubsection{A posteriori estimate}\label{sec:lsq:aposteriori}
We can also use the least-squares functional to measure, up to an oscillation term, errors. To that end define the estimator and oscillation term
\begin{align*}
  \eta &:= \left(\norm{\nabla u_h-\ssigma_h}{}^2 + \norm{\div\ssigma_h+Q_h f}{}^2\right)^{1/2}, \\
  \osc(f) &:= \norm{(1-Q_h)f}{H^{-1}(\Omega)}.
\end{align*}

\begin{theorem}\label{thm:lsq:aposteriori}
  Let $f\in H^{-1}(\Omega)$ and $\TT$ be a regular mesh. If $u\in H_0^1(\Omega)$ denotes the solution of the Poisson problem and $\uu_h=(u_h,\ssigma_h)\in W_h$ the solution of~\eqref{lsq:reg} with $Q_h^\star = Q_h$, then, we have that
  \begin{align*}
    \eta \lesssim \norm{u-u_h}{H^1(\Omega)}+ \norm{\nabla u-\ssigma_h}{} \lesssim \eta + \osc(f).
  \end{align*}
\end{theorem}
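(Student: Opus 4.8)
The plan is to establish the two-sided bound by relating the least-squares functional $\eta$ to the full product-space error for an auxiliary problem, exactly mirroring the a priori analysis. Introduce $\widetilde u\in H_0^1(\Omega)$ solving $-\Delta\widetilde u = Q_h f$ with homogeneous boundary data and set $\widetilde\ssigma:=\nabla\widetilde u$, so that $\widetilde\uu=(\widetilde u,\widetilde\ssigma)$ is the exact FOSLS solution with data $Q_h f$. The crucial observation is that $\uu_h$ is precisely the discrete least-squares approximation to $\widetilde\uu$, so the residual-based quantity $\eta$ equals (up to equivalence) the norm $\norm{\widetilde\uu-\uu_h}{W}$ by the defining norm-equivalence $\norm{(v,\ttau)}{W}^2\eqsim \norm{\nabla v-\ttau}{}^2 + \norm{\div\ttau}{}^2$ applied to the error $\widetilde\uu-\uu_h$ (note $\nabla\widetilde u-\widetilde\ssigma=0$ and $\div\widetilde\ssigma=-Q_h f$, so the two components of $\eta$ are exactly $\norm{\nabla u_h-\ssigma_h-(\nabla\widetilde u-\widetilde\ssigma)}{}$ and $\norm{\div(\ssigma_h-\widetilde\ssigma)}{}$).

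For the lower bound on the error (reliability of $\eta$, i.e.\ $\norm{u-u_h}{H^1(\Omega)}+\norm{\nabla u-\ssigma_h}{}\lesssim \eta+\osc(f)$), I would split via the triangle inequality into $\norm{u-\widetilde u}{H^1(\Omega)}$ plus the discretization error $\norm{\widetilde\uu-\uu_h}{W}$. The first piece is controlled by $\osc(f)=\norm{(1-Q_h)f}{H^{-1}(\Omega)}$ using the regularity shift $\norm{u-\widetilde u}{H^1(\Omega)}\eqsim\norm{(1-Q_h)f}{H^{-1}(\Omega)}$ as already exploited in the proof of Theorem~\ref{thm:LSregularized}, while the second piece is $\eqsim\eta$ by the norm equivalence just described. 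For the reverse direction (efficiency, $\eta\lesssim\norm{u-u_h}{H^1(\Omega)}+\norm{\nabla u-\ssigma_h}{}$), I would again use $\eta\eqsim\norm{\widetilde\uu-\uu_h}{W}$ and then the triangle inequality against the true solution $\uu$, absorbing $\norm{u-\widetilde u}{H^1(\Omega)}+\norm{\widetilde\ssigma-\ssigma}{}$ into an $\osc(f)$ term; however, since the claimed efficiency bound has no $\osc(f)$ on the right, I expect one must instead bound $\eta$ directly by computing $\norm{\nabla u_h-\ssigma_h}{}$ and $\norm{\div\ssigma_h+Q_h f}{}$ against the exact relations $\nabla u=\ssigma$ and $\div\ssigma=-f$, inserting $Q_h f$ and using boundedness of $Q_h$ together with $\div\ssigma_h+Q_h f=\div(\ssigma_h-\ssigma)+(Q_h f-\Pi_h^0 f)+\ldots$ carefully.

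The main obstacle is the efficiency estimate: naively one picks up a term $\norm{\div\ssigma+Q_h f}{}=\norm{(Q_h-1)f+\text{(smooth part)}}{}$ which is only an $L^2$ quantity and need not be controlled by the stated $H^1$-error and the $H^{-1}$-oscillation without care. The resolution is that $\div\ssigma_h\in\PP^0(\TT)$ and $Q_h f\in\PP^0(\TT)$, so $\div\ssigma_h+Q_h f$ is piecewise constant and one can use the identity $\div\widetilde\ssigma=-Q_h f$ to write $\norm{\div\ssigma_h+Q_h f}{}=\norm{\div(\ssigma_h-\widetilde\ssigma)}{}$, which is a genuine discretization-error term bounded by $\eta$ and hence feeds back consistently. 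Thus the efficiency follows from the equivalence $\eta\eqsim\norm{\widetilde\uu-\uu_h}{W}$ together with $\norm{\widetilde\uu-\uu_h}{W}\lesssim\norm{\uu-\uu_h}{W}+\norm{\uu-\widetilde\uu}{W}$ and the fact that only the $H^1$ and $L^2$ components of $\norm{\uu-\uu_h}{W}$ appear on the right while the remaining $\norm{\div(\ssigma-\ssigma_h)}{}$ contribution must be shown to be already encoded in $\eta$; verifying that no uncontrolled $\norm{\div(\sigma-\sigma_h)}{}$ survives is the delicate point I would check most carefully.
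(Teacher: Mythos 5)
Your reliability direction is fine and coincides with the paper's argument: introduce $\widetilde u$ solving $-\Delta\widetilde u=Q_hf$, observe that $\uu_h$ is the discrete FOSLS approximation of $\widetilde\uu=(\widetilde u,\nabla\widetilde u)$ so that $\eta\eqsim\norm{\widetilde\uu-\uu_h}{W}$, and conclude by the triangle inequality together with $\norm{u-\widetilde u}{H^1(\Omega)}\eqsim\norm{(1-Q_h)f}{H^{-1}(\Omega)}=\osc(f)$.

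The efficiency direction, however, has a genuine gap. You correctly identify the obstacle --- the term $\norm{\div\ssigma_h+Q_hf}{}$ must be bounded without any $\osc(f)$ or $\norm{\div(\ssigma-\ssigma_h)}{}$ appearing on the right --- but your proposed resolution is circular: rewriting $\norm{\div\ssigma_h+Q_hf}{}=\norm{\div(\ssigma_h-\widetilde\ssigma)}{}$ and declaring it ``bounded by $\eta$'' proves nothing, since this quantity \emph{is} one of the two components of $\eta$; the whole task is to bound it by $\norm{u-u_h}{H^1(\Omega)}+\norm{\nabla u-\ssigma_h}{}$. Your fallback route $\eta\lesssim\norm{\uu-\uu_h}{W}+\norm{\uu-\widetilde\uu}{W}$ fails outright: the first term contains $\norm{\div(\ssigma-\ssigma_h)}{}$, which is not admissible on the right-hand side of the claim, and the second contains $\norm{\div(\ssigma-\widetilde\ssigma)}{}=\norm{(1-Q_h)f}{}$, which is not even finite for $f\in H^{-1}(\Omega)\setminus L^2(\Omega)$. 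The missing idea is a duality-plus-Galerkin-orthogonality argument: let $(v,\ttau)\in W$ solve $\div\ttau=-(\div\ssigma_h+Q_hf)$, $\nabla v-\ttau=0$, $v|_\Gamma=0$; then, for any $(v_h,\ttau_h)\in W_h$, the Euler--Lagrange equations give
\begin{align*}
\norm{\div\ssigma_h+Q_hf}{}^2 = -\ip{\div\ssigma_h+Q_hf}{\div(\ttau-\ttau_h)}-\ip{\nabla u_h-\ssigma_h}{\nabla(v-v_h)-(\ttau-\ttau_h)}.
\end{align*}
Choosing $\ttau_h=\Pi_h^\mathrm{div}\ttau$, the commutativity property $\div(\ttau-\ttau_h)=(1-\Pi_h^0)\div\ttau$ annihilates the first term because $\div\ttau\in\PP^0(\TT)$; choosing $v_h=J_hv$, boundedness of $J_h$ and $\Pi_h^\mathrm{div}$ together with stability of the Poisson problem bounds the second term by $\norm{\nabla u_h-\ssigma_h}{}\,\norm{\div\ssigma_h+Q_hf}{}$. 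This yields $\norm{\div\ssigma_h+Q_hf}{}\lesssim\norm{\nabla u_h-\ssigma_h}{}$, after which $\eta\lesssim\norm{\nabla u_h-\ssigma_h}{}\lesssim\norm{\nabla(u-u_h)}{}+\norm{\nabla u-\ssigma_h}{}$ gives efficiency with no oscillation term. Without this (or an equivalent) argument, the left inequality of the theorem remains unproved.
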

\begin{proof}
  For the proof of the upper bound consider the regularized problem
  \begin{align*}
    -\Delta\widetilde u = Q_h f, \quad \widetilde u|_\Gamma = 0
  \end{align*}
  and set $\widetilde \uu=(\widetilde u,\nabla\widetilde u)\in U$. Since $\uu_h$ is the FOSLS approximation of $\widetilde \uu$ we have that
  \begin{align*}
    \norm{\widetilde \uu-\uu_h}W \eqsim \eta. 
  \end{align*}
  Thus, the triangle inequality yields the estimate
  \begin{align*}
    \norm{u-u_h}{H^1(\Omega)} + \norm{\nabla u-\ssigma_h}{} \lesssim \norm{(1-Q_h)f}{H^{-1}(\Omega)} + \eta.
  \end{align*}

  For the lower bound we use the operator $\Pi_h^\mathrm{div}\colon \Hdivset\Omega\to \RT^0(\TT)$ (see Section~\ref{sec:approx} and~\eqref{eq:propertiesPihDiv}). We show that $\norm{\div\ssigma_h+Q_h f}{}\lesssim \norm{\nabla u_h-\ssigma_h}{}$.
  To this end we consider the unique weak solution $\vv=(v,\ttau)\in W$ to the problem
  \begin{align*}
    \div\ttau &= -(\div\ssigma_h+Q_h f), \\
    \nabla v-\ttau &= 0, \\
    v|_\Gamma &= 0.
  \end{align*}
  With $\vv_h:=(v_h,\ttau_h):=(J_hv,\Pi_h^\mathrm{div}\ttau)\in W_h$ (see~\eqref{def:Jh} for the definition of the operator $J_h$) and Galerkin orthogonality we infer that
  \begin{align*}
  \norm{\div\ssigma_h+Q_h f}{}^2 &= \ip{-Q_h f-\div\ssigma_h}{\div\ttau} + \ip{-\nabla u_h+\ssigma_h}{\nabla v - \ttau} \\ 
    &=-\ip{\div\ssigma_h+Q_h f}{\div(\ttau-\ttau_h)} - \ip{\nabla u_h-\ssigma_h}{\nabla(v-v_h)-(\ttau-\ttau_h)}.
  \end{align*}
  The commutativity property $\div(\ttau-\ttau_h)=(1-\Pi_h^0)\div\ttau$ and $\div\ttau\in \PP^0(\TT)$ show that the first term on the right-hand side vanishes. 
  Boundedness of $J_h$, $\Pi_h^\mathrm{div}$ (see~\eqref{eq:propertiesPihDiv}) and stability of the Poisson problem prove
  \begin{align*}
    \norm{v-J_h v}{H^1(\Omega)} + \norm{\ttau-\Pi_h^\mathrm{div}\ttau}{} 
    &\lesssim \norm{v}{H^1(\Omega)} + \norm{\ttau}{}
    \lesssim  \norm{\div\ssigma_h+Q_h f}{H^{-1}(\Omega)}.
  \end{align*}
  Putting the latter observations together implies with $\norm{\cdot}{H^{-1}(\Omega)}\lesssim \norm{\cdot}{}$ that
  \begin{align*}
    \norm{\div\ssigma_h+Q_h f}{}^2 &= -\ip{\div\ssigma_h+Q_h f}{\div(\ttau-\ttau_h)} - \ip{\nabla u_h-\ssigma_h}{\nabla(v-v_h)-(\ttau-\ttau_h)} 
    \\&\lesssim \norm{\nabla u_h-\ssigma_h}{} (\norm{v-J_h v}{H^1(\Omega)} + \norm{\ttau-\Pi_h^\mathrm{div}\ttau}{}) \lesssim \norm{\nabla u_h-\ssigma_h}{}\norm{\div\ssigma_h+Q_h f}{}.
  \end{align*}
  With the triangle inequality we conclude that 
  \begin{align*}
    \eta \lesssim \norm{\nabla u_h-\ssigma_h}{} \lesssim \norm{\nabla(u-u_h)}{} + \norm{\nabla u-\ssigma_h}{}
  \end{align*}
  which finishes the proof.
\end{proof}
The assertions of Theorem~\ref{thm:lsq:aposteriori} are known for $f\in L^2(\Omega)$ and $Q_h^\star=\Pi_h^0$, see~\cite[Theorem~2]{CC2020}. An equivalence similar to~\cite[Eq.(3)]{CC2020} holds, as stated in the following result:

\begin{corollary}
  Under the assumptions of Theorem~\ref{thm:lsq:aposteriori} suppose additionally that $f\in L^2(\Omega)$. 
  The equivalence
  \begin{align*}
    &\norm{\nabla u_h-\ssigma_h}{} + \norm{\div\ssigma_h+Q_h f}{} + \norm{h_\TT(1-\Pi_h^0)f}{} 
    \\&\qquad\eqsim \norm{\nabla u-\ssigma_h}{} + \norm{u-u_h}{H^1(\Omega)} + \norm{h_\TT(1-\Pi_h^0)f}{}
  \end{align*}
  holds true.
\end{corollary}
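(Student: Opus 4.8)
The plan is to read the claimed equivalence off of Theorem~\ref{thm:lsq:aposteriori}, whose two-sided bound already sandwiches $\norm{u-u_h}{H^1(\Omega)}+\norm{\nabla u-\ssigma_h}{}$ by the estimator $\eta$ up to the oscillation $\osc(f)=\norm{(1-Q_h)f}{H^{-1}(\Omega)}$. The single new ingredient I need is to bound this (non-computable) dual-norm oscillation by the explicit term $\norm{h_\TT(1-\Pi_h^0)f}{}$, which is well defined precisely because now $f\in L^2(\Omega)$. As a preliminary step I would note that $\eta$, being the square root of a sum of two squares, satisfies $\eta\eqsim\norm{\nabla u_h-\ssigma_h}{}+\norm{\div\ssigma_h+Q_h f}{}$; hence the left-hand side of the asserted equivalence is $\eqsim\eta+\norm{h_\TT(1-\Pi_h^0)f}{}$ and the right-hand side equals the error controlled in Theorem~\ref{thm:lsq:aposteriori} plus the same common summand.

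For the direction ``left $\lesssim$ right'' I would simply invoke the efficiency bound $\eta\lesssim\norm{u-u_h}{H^1(\Omega)}+\norm{\nabla u-\ssigma_h}{}$ of Theorem~\ref{thm:lsq:aposteriori} and add the common term $\norm{h_\TT(1-\Pi_h^0)f}{}$ to both sides; nothing further is needed here.

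The reverse direction ``right $\lesssim$ left'' rests on the reliability bound $\norm{u-u_h}{H^1(\Omega)}+\norm{\nabla u-\ssigma_h}{}\lesssim\eta+\osc(f)$, so the crux is the estimate $\osc(f)\lesssim\norm{h_\TT(1-\Pi_h^0)f}{}$. Here I would exploit the structural properties of $Q_h$ collected in Proposition~\ref{prop:projHoneDual}: idempotency on piecewise constants (Proposition~\ref{prop:projHoneDual}\ref{prop:projHoneDual:proj}) gives $(1-Q_h)\Pi_h^0 f=0$, so that $\osc(f)=\norm{(1-Q_h)(1-\Pi_h^0)f}{H^{-1}(\Omega)}$, and then the local approximation property (Proposition~\ref{prop:projHoneDual}\ref{prop:projHoneDual:app}) applied to $\phi:=(1-\Pi_h^0)f\in L^2(\Omega)$ yields $\norm{(1-Q_h)\phi}{H^{-1}(\Omega)}\lesssim\norm{h_\TT\phi}{}=\norm{h_\TT(1-\Pi_h^0)f}{}$. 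Feeding this into the reliability bound and again adding the common term closes this direction.

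The only point requiring care is that Theorem~\ref{thm:lsq:aposteriori} assumes merely a regular mesh, not a quasi-uniform one, so I must use the \emph{local}, $h_\TT$-weighted approximation estimate of Proposition~\ref{prop:projHoneDual}\ref{prop:projHoneDual:app} rather than the globally scaled bound of Lemma~\ref{lem:propQhstar}; keeping the weight $h_\TT$ inside the $L^2$ norm is essential. No regularity of $f$ beyond $f\in L^2(\Omega)$ enters, which is exactly what makes all three terms on either side finite.
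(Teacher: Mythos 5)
Your proof is correct and follows essentially the same route as the paper: both reduce the claim to Theorem~\ref{thm:lsq:aposteriori} together with the bound $\norm{(1-Q_h)f}{H^{-1}(\Omega)}=\norm{(1-Q_h)(1-\Pi_h^0)f}{H^{-1}(\Omega)}\lesssim\norm{h_\TT(1-\Pi_h^0)f}{}$, obtained from idempotency on piecewise constants plus the approximation property of $Q_h$. Your remark that one should invoke the \emph{local} $h_\TT$-weighted estimate of Proposition~\ref{prop:projHoneDual}\ref{prop:projHoneDual:app} (rather than the globally scaled bound of Lemma~\ref{lem:propQhstar}) is a point of precision the paper glosses over, but it is the same argument.
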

\begin{proof}
  The equivalence follows from Theorem~\ref{thm:lsq:aposteriori}, the estimate 
  \begin{align*}
    \norm{(1-Q_h)f}{H^{-1}(\Omega)} = \norm{(1-Q_h)(1-\Pi_h^0)f}{H^{-1}(\Omega)} \lesssim \norm{h_\TT(1-\Pi_h^0)f}{}
  \end{align*}
  which is due to the projection property of $Q_h$, and Lemma~\ref{lem:propQhstar}.
\end{proof}

\section{On the optimality of $L^2$ error estimates in the FOSLS}\label{sec:L2errLS}
In this section we focus on $L^2(\Omega)$ error estimates in the primal variable of the solutions $(u_h,\ssigma_h)$ of~\eqref{lsq:minimization:discrete} resp.~\eqref{lsq:reg} given that $f\in L^2(\Omega)$.
That is, we consider the approximations
\begin{align}\label{lsq:minimization:Qhstar}
  (u_h,\ssigma_h) = \argmin_{(v_h,\ttau_h)\in W_h} \norm{\div\ttau_h + Q_h^\star f}{}^2 + \norm{\nabla v_h-\ttau_h}{}^2
\end{align}
with $Q_h^\star = \Pi_h^0$ (standard FOSLS) or $Q_h^\star = Q_h$ (regularized FOSLS). 

For the solution component $u_h$ with $Q_h^\star =\Pi_h^0$, 
error estimates in $L^2(\Omega)$ have been studied, e.g., in~\cite{Ku2011} and references therein.
For a study of optimal $L^2(\Omega)$ convergence rates on smooth domains we refer to the recent article~\cite{LSQL2normSmooth}.
There, the authors prove optimal convergence rates for the standard FOSLS with higher-order discretization spaces, whereas the case of the lowest-order space $W_h$ is excluded, see~\cite[Theorem~4.13 and Remark~4.14]{LSQL2normSmooth}.

The following result is similar to~\cite[Theorem~4.5]{Ku2011}, but we do not require sufficiently small mesh-sizes and can handle both cases $Q_h^\star = \Pi_h^0$ and $Q_h^\star = Q_h$ simultaneously. 
For simplicity we restrict the presentation to convex domains.

\begin{theorem}\label{thm:ls:L2}
  Suppose that $\Omega$ is convex. For $f\in L^2(\Omega)$, let $u\in H_0^1(\Omega)$ denote the solution of~\eqref{poisson} and $(u_h,\ssigma_h)\in W_h$ the solution of~\eqref{lsq:minimization:Qhstar} with either $Q_h^\star = \Pi_h^0$ or $Q_h^\star = Q_h$. We have that
  \begin{align*}
    \norm{u-u_h}{} \lesssim h^2\norm{f}{} + \norm{(1-Q_h^\star)f}{(H^2(\Omega)\cap H_0^1(\Omega))'}.
  \end{align*}
\end{theorem}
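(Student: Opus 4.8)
The plan is to use a duality (Aubin--Nitsche) argument, introducing the auxiliary function $\widetilde u$ solving $-\Delta\widetilde u = Q_h^\star f$ with $\widetilde u|_\Gamma = 0$ and splitting the $L^2(\Omega)$ error via the triangle inequality as
\begin{align*}
  \norm{u-u_h}{} \leq \norm{u-\widetilde u}{} + \norm{\widetilde u-u_h}{}.
\end{align*}
The first term is the regularization error: since $-\Delta(u-\widetilde u) = (1-Q_h^\star)f$ and $\Omega$ is convex, I would test against the solution $w\in H^2(\Omega)\cap H_0^1(\Omega)$ of $-\Delta w = u-\widetilde u$, obtaining $\norm{u-\widetilde u}{}^2 = \ip{(1-Q_h^\star)f}{w}$, which by definition of the dual norm on $X=H^2(\Omega)\cap H_0^1(\Omega)$ and elliptic regularity $\norm{w}{H^2(\Omega)}\lesssim\norm{u-\widetilde u}{}$ gives precisely $\norm{u-\widetilde u}{}\lesssim \norm{(1-Q_h^\star)f}{X'}$, the second term in the asserted bound.

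The remaining work is to show $\norm{\widetilde u-u_h}{}\lesssim h^2\norm{f}{}$, where $u_h$ is the FOSLS approximation of $\widetilde\uu=(\widetilde u,\nabla\widetilde u)$ with right-hand side $Q_h^\star f$. This is a superconvergence-type $L^2$ estimate for the primal variable of the lowest-order FOSLS, and here I would again run an Aubin--Nitsche duality argument: introduce $z\in H^2(\Omega)\cap H_0^1(\Omega)$ solving $-\Delta z=\widetilde u - u_h$ and its flux $\ppsi=\nabla z$, write $\norm{\widetilde u-u_h}{}^2 = \ip{\widetilde u-u_h}{-\div\ppsi}$, integrate by parts, and insert the FOSLS Euler--Lagrange equations together with Galerkin orthogonality against the interpolants $J_h z\in\PP^1(\TT)\cap H_0^1(\Omega)$ and $\Pi_h^\mathrm{div}\ppsi\in\RT^0(\TT)$. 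Using the commutativity $\Pi_h^0\div\ppsi=\div\Pi_h^\mathrm{div}\ppsi$ from~\eqref{eq:propertiesPihDiv}, the approximation properties~\eqref{eq:propJh} of $J_h$, the $H^2$-estimate $\norm{(1-\Pi_h^\mathrm{div})\ppsi}{}\lesssim h\norm{z}{H^2(\Omega)}$, and the energy bound $\norm{\widetilde\uu-\uu_h}{W}\lesssim h\norm{Q_h^\star f}{}\lesssim h\norm{f}{}$ from Proposition~\ref{apriori:lsq2} (which is optimal since $Q_h^\star f$ already realizes $(1-\Pi_h^0)Q_h^\star f=0$ up to oscillation), I expect the two factors of $h$ to combine into the desired $h^2\norm{f}{}$. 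Boundedness of $Q_h^\star$ in $L^2(\Omega)$ and $\norm{z}{H^2(\Omega)}\lesssim\norm{\widetilde u-u_h}{}$ close the estimate.

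The main obstacle I anticipate is the $\norm{\widetilde u - u_h}{}\lesssim h^2\norm{f}{}$ step, specifically arranging the duality so that \emph{both} the primal and the flux contributions pick up a full extra power of $h$ at the lowest order. The delicate point is that the FOSLS minimizes a product norm in which $\norm{\div(\ssigma-\ssigma_h)}{}$ does \emph{not} superconverge; I would therefore be careful to route the argument only through the combination $\norm{\nabla\widetilde u-\ssigma_h}{}$ and the commuting-diagram identity $\div\Pi_h^\mathrm{div}\ppsi=\Pi_h^0\div\ppsi$, so that the problematic divergence terms either cancel by Galerkin orthogonality or are absorbed using $\div\ppsi\in L^2(\Omega)$ together with $\div\nabla\widetilde u = -Q_h^\star f$. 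This is exactly the mechanism that fails for the \emph{standard} FOSLS at the lowest order (as the authors note and illustrate numerically), so a secondary concern is to make explicit where the hypothesis that $Q_h^\star$ is idempotent on piecewise constants, hence $(1-\Pi_h^0)Q_h^\star f$ behaves well, enters and why the argument would not yield the same clean $h^2$ rate without it.
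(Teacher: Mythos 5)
Your overall skeleton matches the paper's proof: the auxiliary solution $\widetilde u$ of $-\Delta\widetilde u = Q_h^\star f$, the triangle-inequality split, and the bound $\norm{u-\widetilde u}{}\lesssim\norm{(1-Q_h^\star)f}{(H^2(\Omega)\cap H_0^1(\Omega))'}$ via duality and convexity are all exactly what the paper does. The gap sits in the step $\norm{\widetilde u-u_h}{}\lesssim h^2\norm{f}{}$, and it is precisely the obstacle you flagged but did not resolve: the dual pair $(z,\ppsi)$ with $-\Delta z=\widetilde u-u_h$, $\ppsi=\nabla z$ does not fit the FOSLS Galerkin orthogonality. Testing the least-squares form with $(z,\ppsi)$ gives $\ip{\nabla(\widetilde u-u_h)-(\widetilde\ssigma-\ssigma_h)}{\nabla z-\ppsi}+\ip{\div(\widetilde\ssigma-\ssigma_h)}{\div\ppsi} = -\ip{\div(\widetilde\ssigma-\ssigma_h)}{\widetilde u-u_h}$, which is not $\norm{\widetilde u-u_h}{}^2$. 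If instead you start from $\norm{\widetilde u-u_h}{}^2=\ip{\nabla(\widetilde u-u_h)}{\nabla z}$, split off $\widetilde\ssigma-\ssigma_h$ and integrate by parts, you are left with $-\ip{\div(\widetilde\ssigma-\ssigma_h)}{z}$, and no admissible choice of interpolants $(v_h,\ttau_h)$ built from $z$ and $\ppsi$ removes it: after Galerkin orthogonality you retain terms of the form $\ip{\div(\widetilde\ssigma-\ssigma_h)}{\Pi_h^0 z}$ or $\ip{\div(\widetilde\ssigma-\ssigma_h)}{\Pi_h^0(\widetilde u-u_h)}$, and since $\norm{\div(\widetilde\ssigma-\ssigma_h)}{}$ does \emph{not} superconverge (it is only $\OO(h)\norm{f}{}$), these are only $\OO(h)\norm{f}{}\norm{\widetilde u-u_h}{}$. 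The argument then closes one power of $h$ short, at $\norm{\widetilde u-u_h}{}\lesssim h\norm{f}{}$.

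What is missing is a \emph{second} dual stage, which is the paper's key construction: besides $w$ (your $z$) with $-\Delta w=\widetilde u-u_h$, solve the first-order adjoint system $(v,\ttau)\in W$ with $\div\ttau=-w$ and $\nabla v-\ttau=\nabla w$, i.e., $-\Delta v = w+(\widetilde u-u_h)$ and $\ttau=\nabla(v-w)$. With this coupled pair one has the exact identity
\begin{align*}
  \norm{\widetilde u-u_h}{}^2=\ip{\nabla(\widetilde u-u_h)-(\widetilde\ssigma-\ssigma_h)}{\nabla v-\ttau}+\ip{\div(\widetilde\ssigma-\ssigma_h)}{\div\ttau},
\end{align*}
and after inserting $(J_hv,\Pi_h^\mathrm{div}\ttau)$ via Galerkin orthogonality the divergence slot vanishes \emph{identically}: $\ip{\div(\widetilde\ssigma-\ssigma_h)}{\div(\ttau-\Pi_h^\mathrm{div}\ttau)}=\ip{\div(\widetilde\ssigma-\ssigma_h)}{(1-\Pi_h^0)\div\ttau}=0$, because $\div(\widetilde\ssigma-\ssigma_h)\in\PP^0(\TT)$. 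This exact cancellation — not an estimate — is the mechanism that rescues the lowest-order case, and it is where the structure of $Q_h^\star$ enters, answering your secondary concern: both $\Pi_h^0 f$ and $Q_hf$ lie in $\PP^0(\TT)$, so $\div\widetilde\ssigma=-Q_h^\star f\in\PP^0(\TT)$ and the oscillation $\norm{(1-\Pi_h^0)Q_h^\star f}{}$ in the energy bound vanishes. The remaining first-slot term is then bounded by $(\norm{\widetilde u-u_h}{H^1(\Omega)}+\norm{\widetilde\ssigma-\ssigma_h}{})\cdot(\norm{\nabla(v-J_hv)}{}+\norm{\ttau-\Pi_h^\mathrm{div}\ttau}{})\lesssim h\norm{f}{}\cdot h\norm{\widetilde u-u_h}{}$, using $H^2$ regularity of both $w$ and $v$ on the convex domain. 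A minor citation point: the energy bound you invoke in the full $W$-norm comes from Proposition~\ref{apriori:lsq1} (with vanishing oscillation), whereas Proposition~\ref{apriori:lsq2} controls only the $H^1\times L^2$ part — which is in fact all the paper needs, since the divergence term cancels exactly rather than being estimated.
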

\begin{proof}
  Let $\widetilde u \in H_0^1(\Omega)$ be the solution of 
  \begin{align*}
    -\Delta \widetilde u = Q_h^\star f.
  \end{align*}
  By the triangle inequality and regularity estimates we get that 
  \begin{align*}
    \norm{u-u_h}{} \leq \norm{u-\widetilde u}{} + \norm{\widetilde u-u_h}{} \lesssim \norm{(1-Q_h^\star)f}{(H^2(\Omega)\cap H_0^1(\Omega))'} + \norm{\widetilde u-u_h}{}.
  \end{align*}
  It remains to estimate $\norm{\widetilde u-u_h}{}$ which we will do by employing a duality argument, see, e.g.,~\cite{CaiKu2006}.
  First, define $w\in H_0^1(\Omega)$ as the solution of $-\Delta w = \widetilde u-u_h$ and $\vv:=(v,\ttau)\in W$ as the solution of
  \begin{align*}
    \div\ttau &= -w, \\
    \nabla v -\ttau &= \nabla w.
  \end{align*}
  Second, with $\widetilde\ssigma :=\nabla\widetilde u$, integration by parts shows that
  \begin{align*}
    \norm{\widetilde u-u_h}{}^2 &= \ip{\widetilde u-u_h}{-\Delta w} = \ip{\nabla(\widetilde u-u_h)}{\nabla w} 
    \\
    &=  \ip{\nabla(\widetilde u-u_h)-(\widetilde\ssigma-\ssigma_h)}{\nabla w} + \ip{\div(\widetilde\ssigma-\ssigma_h)}{-w} \\
    &= \ip{\nabla(\widetilde u-u_h)-(\widetilde\ssigma-\ssigma_h)}{\nabla v-\ttau} + \ip{\div(\widetilde\ssigma-\ssigma_h)}{\div\ttau}.
  \end{align*}
  Finally, we argue as in the proof of Theorem~\ref{thm:lsq:aposteriori} employing the operator $\Pi_h^\mathrm{div}$.
  By using Galerkin orthogonality, choosing $\vv_h=(v_h,\ttau_h)=(J_h v,\Pi_h^\mathrm{div}\ttau)$, and $\div(\widetilde\ssigma-\ssigma_h)\in\PP^0(\TT)$  we see that 
  \begin{align*}
    &\ip{\nabla(\widetilde u-u_h)-(\widetilde\ssigma-\ssigma_h)}{\nabla v-\ttau} + \ip{\div(\widetilde\ssigma-\ssigma_h)}{\div\ttau}
    \\
    &\qquad= \ip{\nabla(\widetilde u-u_h)-(\widetilde\ssigma-\ssigma_h)}{\nabla (v-v_h)-(\ttau-\ttau_h)} + \ip{\div(\widetilde\ssigma-\ssigma_h)}{\div(\ttau-\ttau_h)}
    \\
    &\qquad \lesssim (\norm{\widetilde u-u_h}{H^1(\Omega)} + \norm{\widetilde\ssigma-\ssigma_h}{})(\norm{\nabla(v-v_h)}{}+\norm{\ttau-\ttau_h}{}).
  \end{align*}
  Recalling that $\uu_h$ is the FOSLS approximation of $\widetilde\uu=(\widetilde u,\widetilde\ssigma)$ we may employ Proposition~\ref{apriori:lsq2} with $f$ replaced by $Q_h^\star f$ to infer that
  \begin{align*}
    \norm{\widetilde u-u_h}{H^1(\Omega)} + \norm{\widetilde\ssigma-\ssigma_h}{} \lesssim h\norm{Q_h^\star f}{}\lesssim h\norm{f}{}.
  \end{align*}
  The properties of the operators $J_h$ and $\Pi_h^\mathrm{div}$ (see~\eqref{eq:propertiesPihDiv}) together with elliptic regularity show that $\norm{\nabla(v-v_h)}{}+\norm{\ttau-\ttau_h}{}\lesssim h\norm{\widetilde u-u_h}{}$. This finishes the proof. 
\end{proof}

\begin{remark}\label{rem:extendLSL2}
  Theorem~\ref{thm:ls:L2} with $Q_h^\star =Q_h$ can be extended to $f\in H^{-1+s}(\Omega)$, $s\in[0,1]$. The same argumentation yields
  \begin{align*}
    \norm{u-u_h}{} \lesssim h^{1+s}\norm{f}{H^{-1+s}(\Omega)} + \norm{(1-Q_h)f}{(H^2(\Omega)\cap H_0^1(\Omega))'}.
  \end{align*}
\end{remark}

\begin{remark}
  By duality arguments and the projection property of $Q_h^\star$ one sees that
  \begin{align*}
    \norm{(1-Q_h^\star)f}{(H^2(\Omega)\cap H_0^1(\Omega))'} \lesssim \norm{(1-Q_h^\star)f}{H^{-1}(\Omega)} \lesssim h \norm{(1-\Pi_h^0)f}{}.
  \end{align*}
  Thus, additional regularity $f\in H^1(\TT)$ proves $\norm{u-u_h}{}=\OO(h^2)$.
  We note that this has also been observed in~\cite[Remark~4.2]{Ku2011} for $Q_h^\star = \Pi_h^0$.

  We note that, usually, $\norm{(1-\Pi_h^0)f}{(H^2(\Omega)\cap H_0^1(\Omega))'}$ will not converge at $\OO(h^2)$ without further regularity of $f$. 
  Under some conditions on the mesh, superconvergence of $\norm{(1-Q_h)f}{(H^2(\Omega)\cap H_0^1(\Omega))'}$ can be proven (see Section~\ref{sec:meshcondition} below).

  In Section~\ref{sec:ex:L2notoptimal} we present a numerical example in 2D for which $\norm{u-u_h}{}=\OO(h^2)$ if $Q_h^\star = Q_h$ but $\norm{u-u_h}{}\neq \OO(h^2)$ if $Q_h^\star=\Pi_h^0$.
\end{remark}

\subsection{Optimal $L^2$ estimate under mesh condition}\label{sec:meshcondition}
For $T\in\TT$ let $s_T\in \R^d$ denote its center of mass, i.e., $s_T = \frac1{d+1}\sum_{z\in \VV_T} z$.
For each interior node $z\in\VV_0$ we define the center of mass of the patch $\patch(z)$ by
\begin{align*}
  s_z := \frac{1}{|\Patch(z)|}\sum_{T\in\patch(z)} |T| s_T.
\end{align*}

For the analysis we use the Cl\'ement interpolation operator~\cite{Clement75}, $J_h^\mathrm{Cl\'e} v := \sum_{z\in\VV_0} V_z \eta_z$  with zero-th order moments
\begin{align*}
  V_z := \frac1{|\Patch(z)|} \int_{\Patch(z)} v(x)\,\di x,  \quad z\in \VV_0.
\end{align*}
Furthermore, recall the definitions of $J_h$, $\psi_z$, $B_h$ and $P_h$, cf.~\eqref{def:Jh}--\eqref{def:Ph}.

The following observation is crucial: 
\begin{lemma}
For $v\in L^2(\Omega)$ we have that
\begin{align}\label{eq:clementIdentity}
  P_h \Pi_h^0 v = J_h^\mathrm{Cl} v + B_h(1-J_h^\mathrm{Cl})v.
\end{align}
\end{lemma}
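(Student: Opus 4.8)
The plan is to expand $P_h\Pi_h^0 v$ through its definition $P_h w = J_h w + B_h(1-J_h)w$ and reduce the claim to two elementary identities, one for the nodal (Scott--Zhang-type) part $J_h$ and one for the bubble part $B_h$, treated separately. So I would first write $P_h\Pi_h^0 v = J_h\Pi_h^0 v + B_h(1-J_h)\Pi_h^0 v$ and then match each summand against the right-hand side.

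First I would treat the nodal part and show $J_h\Pi_h^0 v = J_h^\mathrm{Cl} v$. Since $J_h w = \sum_{z\in\VV_0}\ip{w}{\psi_z}\eta_z$, it suffices to verify that the nodal coefficients agree, i.e.\ that $\ip{\Pi_h^0 v}{\psi_z} = V_z$ for every interior vertex $z\in\VV_0$. The key computation is the value of $\int_T\psi_z$: inserting the explicit formula for $\psi_z$ and using $\int_T\eta_z = |T|/(d+1)$ together with $\int_T 1 = |T|$, the factors $(d+1)(d+2)$ and $(d+1)$ combine so that $\int_T\psi_z = |T|/|\Patch(z)|$. Writing $(\Pi_h^0 v)|_T = |T|^{-1}\int_T v\,\di x$ and summing over $T\in\patch(z)$ then gives $\ip{\Pi_h^0 v}{\psi_z} = |\Patch(z)|^{-1}\int_{\Patch(z)} v\,\di x = V_z$, which is precisely the Clément coefficient. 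This is the main (and essentially the only nontrivial) calculation; everything else is bookkeeping.

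Next I would handle the bubble part and show $B_h\Pi_h^0 v = B_h v$. Because $B_h w = \sum_{T\in\TT}\big(\int_T w\,\di x\big)\eta_{b,T}$ depends on $w$ only through its elementwise integrals, and $\int_T\Pi_h^0 v\,\di x = \int_T v\,\di x$ by the very definition of the $L^2$ projection onto $\PP^0(\TT)$, the two agree. Consequently $B_h(1-J_h)\Pi_h^0 v = B_h\Pi_h^0 v - B_h J_h\Pi_h^0 v = B_h v - B_h J_h^\mathrm{Cl} v = B_h(1-J_h^\mathrm{Cl})v$, where the second equality uses the nodal identity established above. Combining the two pieces yields $P_h\Pi_h^0 v = J_h^\mathrm{Cl} v + B_h(1-J_h^\mathrm{Cl})v$, as asserted. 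The only delicate point to get right is the evaluation $\int_T\psi_z = |T|/|\Patch(z)|$, so I would carry out that integral carefully and treat the remaining manipulations as routine.
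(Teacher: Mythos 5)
Your proposal is correct and follows essentially the same route as the paper's proof: establish $J_h\Pi_h^0 v = J_h^{\mathrm{Cl}}v$ via the identity $\ip{\psi_z}{1}_T = |T|/|\Patch(z)|$, note $B_h\Pi_h^0 v = B_h v$ from the elementwise mean-value property of $\Pi_h^0$, and combine through $P_h = J_h + B_h(1-J_h)$. The only difference is cosmetic: you verify $\int_T\psi_z = |T|/|\Patch(z)|$ explicitly from the formula \eqref{def:psiz} (correctly, using $\int_T\eta_z = |T|/(d+1)$), whereas the paper simply states this identity.
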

\begin{proof}
Note that $\ip{\psi_z}{1}_T = \frac{|T|}{|\Patch(z)|}$ yielding
\begin{align*}
  \ip{\psi_z}{\Pi_h^0 v} &= \sum_{T\in\patch(z)} \Pi_h^0v|_T \ip{\psi_z}{1}_T = \sum_{T\in\patch(z)} \frac{|T|}{|\Patch(z)|} \Pi_h^0v|_T 
  = \sum_{T\in\patch(z)} \frac{1}{|\Patch(z)|} \ip{v}1_T  
  = V_z.
\end{align*}
This proves that $J_h\Pi_h^0 v = J_h^\mathrm{Cl}v$. Note that $\ip{\Pi_h^0v}1_T = \ip{v}1_T$ implies $B_h\Pi_h^0 v = B_h v$.
Putting all the identities together and using that $P_h = J_h + B_h(1-J_h)$ we obtain~\eqref{eq:clementIdentity}.
\end{proof}

The following superconvergence result holds on special meshes:
\begin{lemma}\label{lem:clement}
  Suppose that $s_z = z$ for all $z\in \VV_0$. 
  For $v\in H^2(\Omega)\cap H_0^1(\Omega)$ we have that
  \begin{align*}
    \norm{v-P_h\Pi_h^0 v}{}\lesssim h^2 \norm{v}{H^2(\Omega)}.
  \end{align*}
\end{lemma}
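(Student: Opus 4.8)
The plan is to reduce the claim, via the identity~\eqref{eq:clementIdentity}, to a superconvergence estimate for the Cl\'ement operator alone, and then to extract the extra power of $h$ from the mesh condition $s_z=z$. First I would rewrite, using~\eqref{eq:clementIdentity},
\begin{align*}
  v - P_h\Pi_h^0 v = (1-B_h)(1-J_h^\mathrm{Cl})v.
\end{align*}
Since $B_h\colon L^2(\Omega)\to L^2(\Omega)$ is bounded, $\norm{v-P_h\Pi_h^0 v}{}\lesssim \norm{(1-J_h^\mathrm{Cl})v}{}$, so it suffices to prove $\norm{(1-J_h^\mathrm{Cl})v}{}\lesssim h^2\norm{v}{H^2(\Omega)}$. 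Note that without the mesh condition the Cl\'ement estimate is only of first order in $L^2(\Omega)$, because $J_h^\mathrm{Cl}$ reproduces constants but not affine functions; the whole point is that $s_z=z$ upgrades this to second order.

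Next I would compare $J_h^\mathrm{Cl}$ with the nodal (Lagrange) interpolant $I_h$. Because $d\le 3$ gives $H^2(\Omega)\hookrightarrow C^0(\overline\Omega)$, the nodal values $v(z)$ are well defined, and since $v\in H_0^1(\Omega)$ vanishes at the boundary vertices we have $I_h v=\sum_{z\in\VV_0}v(z)\eta_z\in\PP^1(\TT)\cap H_0^1(\Omega)$; thus both $I_h v$ and $J_h^\mathrm{Cl}v$ are sums over interior vertices only and their difference is $\sum_{z\in\VV_0}(v(z)-V_z)\eta_z$. Using the standard estimate $\norm{v-I_h v}{}\lesssim h^2\norm{v}{H^2(\Omega)}$ and the triangle inequality, it remains to bound $\norm{\sum_{z\in\VV_0}(v(z)-V_z)\eta_z}{}$. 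This detour through $I_h$ is what cleanly absorbs the boundary, since the dropped boundary hat-functions never enter the difference.

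The crux is the local bound $|v(z)-V_z|\lesssim h^{2-d/2}\snorm{v}{H^2(\Patch(z))}$, and this is exactly where $s_z=z$ enters. For any affine $\ell$ one has $\frac{1}{|\Patch(z)|}\int_{\Patch(z)}\ell\,\di x=\ell(s_z)$, so by linearity of the averaging and $s_z=z$,
\begin{align*}
  v(z)-V_z = \big(v(z)-\ell(z)\big) - \frac{1}{|\Patch(z)|}\int_{\Patch(z)}(v-\ell)\,\di x.
\end{align*}
Choosing $\ell$ to be a first-order averaged Taylor polynomial of $v$ on the (star-shaped, shape-regular) patch $\Patch(z)$, a Bramble--Hilbert argument bounds $\norm{v-\ell}{L^2(\Patch(z))}\lesssim h^2\snorm{v}{H^2(\Patch(z))}$ and, via Sobolev embedding and scaling, $\norm{v-\ell}{L^\infty(\Patch(z))}\lesssim h^{2-d/2}\snorm{v}{H^2(\Patch(z))}$; together with $|\Patch(z)|\eqsim h^d$ both terms above are controlled by $h^{2-d/2}\snorm{v}{H^2(\Patch(z))}$. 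Since $\norm{\eta_z}{}\eqsim h^{d/2}$, this yields $\norm{(v(z)-V_z)\eta_z}{}\lesssim h^2\snorm{v}{H^2(\Patch(z))}$, and summing with the finite-overlap property of the patches gives $\norm{\sum_{z\in\VV_0}(v(z)-V_z)\eta_z}{}\lesssim h^2\norm{v}{H^2(\Omega)}$. The main obstacle is precisely this local estimate: one must see that averaging an affine function over the patch returns its value at the centroid, so that the hypothesis $s_z=z$ makes $J_h^\mathrm{Cl}$ reproduce affine functions and thereby gain the extra order; everything else is routine interpolation theory.
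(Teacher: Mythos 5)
Your proof is correct, and it follows the paper's overall strategy: both arguments reduce the claim, via the identity~\eqref{eq:clementIdentity} and the $L^2(\Omega)$-boundedness of $B_h$, to the superconvergence bound $\norm{(1-J_h^\mathrm{Cl})v}{}\lesssim h^2\norm{v}{H^2(\Omega)}$, and both extract the extra order from the fact that, under $s_z=z$, the Cl\'ement nodal functional $V_z$ is exact on affine functions at the node $z$, after which a Bramble--Hilbert argument on patches finishes the job. The genuine difference lies in the comparison operator carrying the second-order accuracy. The paper compares $J_h^\mathrm{Cl}$ with the biorthogonal quasi-interpolant $J_h$, writes $(J_h-J_h^\mathrm{Cl})v(z)=\ip{v}{\psi_z}-\frac{1}{|\Patch(z)|}\ip{v}{1}_{\Patch(z)}$, kills affine functions using $\ip{q}{\psi_z}=q(z)$ and $\frac{1}{|\Patch(z)|}\ip{q}{1}_{\Patch(z)}=q(s_z)=q(z)$, and bounds the resulting nodal error purely in $L^2$ through the local projection $\Pi_z^1$, so no pointwise values of $v$ are ever needed. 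You instead compare with the Lagrange interpolant $I_h$, which requires the embedding $H^2(\Omega)\hookrightarrow C^0(\overline\Omega)$ (available here since $d\le3$), an $L^\infty$-type scaled Bramble--Hilbert estimate on patches, and the observation that boundary vertices drop out of $I_hv-J_h^\mathrm{Cl}v$ because $v\in H_0^1(\Omega)$ vanishes at boundary nodes --- a point your detour genuinely needs and which you handle correctly. Both routes are sound: the paper's stays entirely within $L^2$ functionals and thus avoids any continuity or dimension considerations, while yours leans on classical nodal interpolation theory and is arguably the more elementary argument for a reader who knows $\norm{v-I_hv}{}\lesssim h^2\norm{v}{H^2(\Omega)}$. (Your use of $|\Patch(z)|\eqsim h^d$ implicitly invokes quasi-uniformity, but this matches the paper's standing assumption for its a priori estimates, so it is not a gap.)
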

\begin{proof}
  Identity~\eqref{eq:clementIdentity} and boundedness of $B_h\colon L^2(\Omega)\to L^2(\Omega)$, see~\eqref{def:Bh}, prove that 
  \begin{align*}
    \norm{v-P_h\Pi_h^0 v}{} = \norm{v-J_h^\mathrm{Cl} v - B_h(1-J_h^\mathrm{Cl})v}{} \lesssim  \norm{(1-J_h^\mathrm{Cl})v}{}
    \leq \norm{v-J_h v}{} + \norm{J_h v- J_h^\mathrm{Cl}v}{}.
  \end{align*}
  For the first term on the right-hand side we use~\eqref{eq:propJh}, i.e., $\norm{v-J_h v}{} \lesssim h^2\norm{v}{H^2(\Omega)}$. 
  For the second term we note that by the $L^2(\Omega)$ stability of the basis functions we have that
  \begin{align}\label{eq:proofClement1}
    \norm{(J_h-J_h^\mathrm{Cl})v}{}^2 \eqsim \sum_{z\in\VV_0} |(J_h-J_h^\mathrm{Cl})v(z)|^2\norm{\eta_z}{}^2 \eqsim 
    \sum_{z\in\VV_0}|\Patch(z)|\,|(J_h-J_h^\mathrm{Cl})v(z)|^2 
  \end{align}
  where $(J_h-J_h^\mathrm{Cl})v(z) = \ip{v}{\psi_z}-\frac1{\Patch(z)}\ip{v}1_{\Patch(z)}$.
  Let $q$ be a polynomial of degree $\leq1$. 
  The properties of $\psi_z$ prove that
  \begin{align*}
    \ip{q}{\psi_z} = q(z).
  \end{align*}
  Furthermore, we stress that $\ip{q}1_T = |T|q(s_T)$ yielding
  \begin{align*}
    \frac{1}{|\Patch(z)|} \ip{q}{1}_{\Patch(z)} = \frac{1}{|\Patch(z)|} \sum_{T\in\patch(z)}|T| q(s_T) = q(s_z).
  \end{align*}
  Thus, under the assumption $s_z = z$ for all $z\in \VV_0$ the equality $J_hq(z) = J_h^\mathrm{Cl}q(z)$ holds and, consequently, with $\Pi_z^1 v$ the $L^2$ projection on polynomials of degree $\leq 1$, 
  \begin{align*}
    |(J_h-J_h^\mathrm{Cl})v(z)| &= |(J_h-J_h^\mathrm{Cl})(v-\Pi_z^1v)(z)| =|\ip{v-\Pi_z^1 v}{\psi_z}-\frac1{\Patch(z)}\ip{v-\Pi_z^1 v}1_{\Patch(z)}| \\
    &\lesssim |\Patch(z)|^{-1/2} \norm{v-\Pi_z^1v}{\Patch(z)} \lesssim |\Patch(z)|^{-1/2}h^2\norm{v}{H^2(\Patch(z))}.
  \end{align*}
  Combining the last estimate with~\eqref{eq:proofClement1}, the equivalence $\sum_{z\in\VV_0}\norm{v}{H^2(\Omega(z))}^2\eqsim \norm{v}{H^2(\Omega)}^2$ finishes the proof.
\end{proof}

Lemma~\ref{lem:propPh} holds true for the operator $Q_h$ under the mesh condition:
\begin{lemma}\label{lem:propQh}
  Suppose that $s_z=z$ for all $z\in\VV_0$. 
  For $f\in H^{-1}(\Omega)$, 
  \begin{align*}
    \norm{(1-Q_h)f}{(H^2(\Omega)\cap H_0^1(\Omega))'} \lesssim h\min_{f_h\in \PP^0(\TT)} \norm{f-f_h}{H^{-1}(\Omega)}.
  \end{align*}
\end{lemma}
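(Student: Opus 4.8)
The plan is to follow the duality argument in the proof of Lemma~\ref{lem:propPh}, replacing $P_h'$ by $Q_h=\Pi_h^0P_h'$ and extracting the extra order from the mesh condition via Lemma~\ref{lem:clement}. Write $X=H^2(\Omega)\cap H_0^1(\Omega)$. Since $\Pi_h^0$ is $L^2(\Omega)$-selfadjoint and $P_h$ is the adjoint of $P_h'$, the adjoint of $Q_h$ acts as $Q_h'v=P_h\Pi_h^0v$, so that for every $v\in X$ one has $\dual{(1-Q_h)f}{v}_{X'\times X}=\ip{f}{(1-P_h\Pi_h^0)v}$. By Proposition~\ref{prop:projHoneDual}\ref{prop:projHoneDual:proj} the operator $1-Q_h$ annihilates $\PP^0(\TT)$; dualizing, $(1-P_h\Pi_h^0)v$ is $L^2(\Omega)$-orthogonal to $\PP^0(\TT)$, and hence for any $f_h\in\PP^0(\TT)$,
\begin{align*}
  \dual{(1-Q_h)f}{v}_{X'\times X}=\ip{f-f_h}{(1-P_h\Pi_h^0)v}\leq \norm{f-f_h}{H^{-1}(\Omega)}\,\norm{\nabla(1-P_h\Pi_h^0)v}{}.
\end{align*}
Taking the infimum over $f_h$ and the supremum over $v\in X$ reduces the lemma to the $H_0^1(\Omega)$-estimate $\norm{\nabla(1-P_h\Pi_h^0)v}{}\lesssim h\norm{v}{H^2(\Omega)}$.

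To prove this gradient estimate I would invoke identity~\eqref{eq:clementIdentity}, which gives $(1-P_h\Pi_h^0)v=(1-J_h^\mathrm{Cl})v-B_h(1-J_h^\mathrm{Cl})v$. The first term is controlled by the standard Cl\'ement bound $\norm{\nabla(1-J_h^\mathrm{Cl})v}{}\lesssim h\norm{v}{H^2(\Omega)}$. For the bubble term, an inverse estimate together with the $L^2(\Omega)$ boundedness of $B_h$ (see~\eqref{def:Bh}) yields $\norm{\nabla B_h w}{}\lesssim h^{-1}\norm{w}{}$ for $w\in L^2(\Omega)$; taking $w=(1-J_h^\mathrm{Cl})v$ and invoking the superconvergent bound $\norm{(1-J_h^\mathrm{Cl})v}{}\lesssim h^2\norm{v}{H^2(\Omega)}$ established inside the proof of Lemma~\ref{lem:clement} (this is exactly where the mesh condition $s_z=z$ enters) gives $\norm{\nabla B_h(1-J_h^\mathrm{Cl})v}{}\lesssim h\norm{v}{H^2(\Omega)}$. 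Combining both contributions proves the claimed estimate and hence the lemma.

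The main obstacle is precisely the bubble term: a naive inverse estimate only gives $\norm{\nabla B_h(1-J_h^\mathrm{Cl})v}{}\lesssim\norm{v}{H^1(\Omega)}=\OO(1)$, which is insufficient. The missing factor of $h$ is recovered solely because the mesh condition upgrades the $L^2(\Omega)$ accuracy of $1-J_h^\mathrm{Cl}$ from first to second order, exactly compensating the $h^{-1}$ loss from differentiating the bubbles. I would therefore isolate this interplay carefully, keeping the inverse estimate and the superconvergence local on each patch (using the mesh-size function) and only passing to the global $h$ at the end.
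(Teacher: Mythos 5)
Your overall strategy coincides with the paper's: dualize (the adjoint of $Q_h=\Pi_h^0P_h'$ is $P_h\Pi_h^0$), use idempotency on $\PP^0(\TT)$ to insert $f-f_h$, and reduce the lemma to the gradient estimate $\norm{\nabla(1-P_h\Pi_h^0)v}{}\lesssim h\norm{v}{H^2(\Omega)}$ for $v\in H^2(\Omega)\cap H_0^1(\Omega)$; that reduction is correct. The gap is in the first half of your gradient estimate: the bound $\norm{\nabla(1-J_h^\mathrm{Cl})v}{}\lesssim h\norm{v}{H^2(\Omega)}$ is \emph{not} a ``standard Cl\'ement bound'' for the operator used here. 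The paper's $J_h^\mathrm{Cl}$ is the zeroth-order-moment variant (nodal values are patch averages), which does not reproduce affine functions on a general shape-regular mesh: for affine $q$ one has $J_h^\mathrm{Cl}q(z)=q(s_z)$, so unless $s_z=z$ the nodal error is $\nabla q\cdot(z-s_z)=\OO(h)$, and since these offsets may vary roughly from vertex to vertex, differentiation leaves an $\OO(1)$ contribution to the $H^1$ error. Hence the first-order $H^1$ estimate fails on general meshes and cannot be invoked as known; it is itself a consequence of the mesh condition. In particular, your concluding claim that the mesh condition enters ``solely'' through the bubble term is incorrect --- it is needed for both terms of your splitting.

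The gap is repairable within your own framework, by exactly the mechanism you already use for the bubble term: write $\nabla(1-J_h^\mathrm{Cl})v=\nabla(1-J_h)v+\nabla(J_h-J_h^\mathrm{Cl})v$, bound the first part by~\eqref{eq:propJh}, and bound the second part --- a piecewise affine function --- by an inverse estimate combined with the nodal superconvergence estimate $|(J_h-J_h^\mathrm{Cl})v(z)|\lesssim|\Patch(z)|^{-1/2}h^2\norm{v}{H^2(\Patch(z))}$ established (under $s_z=z$) inside the proof of Lemma~\ref{lem:clement}. With this repair your argument closes and becomes a minor variant of the paper's proof: the paper instead splits $v-P_h\Pi_h^0v=(1-P_h)v+P_h(1-\Pi_h^0)v$, applies the inverse estimate to the discrete function $P_h(1-\Pi_h^0)v$, and invokes Lemma~\ref{lem:clement} as a black box, thereby avoiding any separate treatment of $\nabla(1-J_h^\mathrm{Cl})v$.
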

\begin{proof}
  The proof is similar to the proof of Lemma~\ref{lem:propPh}: Let $X = H^2(\Omega)\cap H_0^1(\Omega)$.
  Recall that $Q_h = \Pi_h^0 P_h'$. For $f_h\in\PP^0(\TT)$,
  \begin{align*}
    \ip{(1-Q_h)f}{v}_{X'\times X} &= \ip{(1-Q_h)(f-f_h)}v = \ip{f-f_h}{v-P_h\Pi_h^0 v} 
    \\ 
    &\lesssim \norm{f-f_h}{H^{-1}(\Omega)} \norm{\nabla(v-P_h\Pi_h^0 v)}{}.
  \end{align*}
  Finally, an inverse estimate and Lemma~\ref{lem:clement} yield
  \begin{align*}
    \norm{\nabla(v-P_h\Pi_h^0 v)}{}& \lesssim \norm{\nabla (v-P_hv)}{} + h^{-1}\norm{P_h v -P_h\Pi_h^0 v}{}
    \\
    &\lesssim \norm{\nabla(1-P_h)v}{} + h^{-1}\norm{(1-P_h\Pi_h^0) v}{} \lesssim h\norm{v}{H^2(\Omega)},
  \end{align*}
  which concludes the proof.
\end{proof}

With the results from this section one has optimal convergence rates for the $L^2(\Omega)$ error:
\begin{corollary}\label{cor:ls:L2}
  Under the situation of Theorem~\ref{thm:ls:L2} with $Q_h^\star = Q_h$, suppose additionally that $s_z=z$ for all $z\in\VV_0$.
  We have that
  \begin{align*}
    \norm{u-u_h}{} \lesssim  h^2 \norm{f}{}.
  \end{align*}
\end{corollary}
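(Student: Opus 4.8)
The plan is to feed the mesh-dependent superconvergence estimate of Lemma~\ref{lem:propQh} into the abstract bound of Theorem~\ref{thm:ls:L2}. First I would apply Theorem~\ref{thm:ls:L2} with $Q_h^\star = Q_h$ (its hypotheses hold since $\Omega$ is convex and $f\in L^2(\Omega)$), obtaining
\[
  \norm{u-u_h}{} \lesssim h^2\norm{f}{} + \norm{(1-Q_h)f}{(H^2(\Omega)\cap H_0^1(\Omega))'}.
\]
The first summand is already of the desired order, so the whole task reduces to showing that the oscillation term on the right is $\OO(h^2\norm{f}{})$.

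Second, this is exactly where the additional mesh condition $s_z=z$ enters. Under this hypothesis Lemma~\ref{lem:propQh} is available and gives
\[
  \norm{(1-Q_h)f}{(H^2(\Omega)\cap H_0^1(\Omega))'} \lesssim h\min_{f_h\in\PP^0(\TT)}\norm{f-f_h}{H^{-1}(\Omega)}.
\]
It then remains to gain one further power of $h$ from the $H^{-1}(\Omega)$ best approximation. Here I would simply take $f_h=\Pi_h^0 f$ and invoke the standard duality (Aubin--Nitsche type) estimate $\norm{(1-\Pi_h^0)f}{H^{-1}(\Omega)}\lesssim h\norm{(1-\Pi_h^0)f}{}\lesssim h\norm{f}{}$, which follows from the self-adjointness of $\Pi_h^0$ together with its $L^2(\Omega)$-approximation property (the same duality argument already used in the proof of Proposition~\ref{apriori:lsq2}). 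Chaining the two displays yields $\norm{(1-Q_h)f}{(H^2(\Omega)\cap H_0^1(\Omega))'}\lesssim h^2\norm{f}{}$, and combining this with the first step closes the proof.

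I do not expect a genuine obstacle in the corollary itself: all the substantive work has been displaced into Lemma~\ref{lem:clement} and Lemma~\ref{lem:propQh}, where the hypothesis $s_z=z$ is used to show that $P_h\Pi_h^0$ reproduces affine functions nodewise and hence superconverges at rate $h^2$. The only point worth double-checking is the extraction of the extra factor $h$ in the $H^{-1}(\Omega)$ best-approximation step; this is routine but essential, since without the mesh condition one controls the oscillation only by $h\norm{(1-\Pi_h^0)f}{}$, which is in general not $\OO(h^2)$ for merely $L^2(\Omega)$ data.
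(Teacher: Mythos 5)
Your proposal is correct and follows exactly the paper's own proof: Theorem~\ref{thm:ls:L2}, then Lemma~\ref{lem:propQh} under the mesh condition $s_z=z$, then the bound $\min_{f_h\in\PP^0(\TT)}\norm{f-f_h}{H^{-1}(\Omega)}\lesssim h\norm{f}{}$ via the choice $f_h=\Pi_h^0 f$ and the standard duality estimate. The only difference is that you spell out the last step, which the paper states without proof.
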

\begin{proof}
  This follows from Theorem~\ref{thm:ls:L2}, Lemma~\ref{lem:propQh} and $\min_{f_h\in \PP^0(\TT)} \norm{f-f_h}{H^{-1}(\Omega)}\lesssim h\norm{f}{}$.
\end{proof}

\begin{remark}
  The mesh condition in this section is used to prove Corollary~\ref{cor:ls:L2}. 
  In numerical experiments (not presented in this work) we found that the assertion of Corollary~\ref{cor:ls:L2} is observed even though the mesh condition $s_z=z$ for all $z\in \VV_0$ is not met. 
\end{remark}

\section{Point loads}\label{sec:pointloads}
Throughout this section we consider a fixed $x_0 \in \Omega$ and let $\delta_{x_0}$ denote the corresponding Dirac delta distribution.
We are interested in approximating the solution of
\begin{align}\label{delta:poisson}
  -\Delta u = \delta_{x_0}, \quad u|_\Gamma = 0.
\end{align}
Clearly, $\delta_{x_0}\notin H^{-1}(\Omega)$. 
While in related works, cf.~\cite{Scott75}, the evaluation of discrete test functions at $x_0$ is well defined, this may not be the case here due to discontinuities of test functions across elements, e.g., the DPG method uses a subspace of $H^1(\TT)$. 
In~\cite{HoustonWihler12}, which deals with discontinuous Galerkin methods, it is assumed that $x_0$ lies in an element interior.
In order to avoid such an assumption we consider a regularized delta distribution. 
The point evaluations of the hat- resp. bubble-functions, $\eta_z$ resp. $\eta_{b,T}$, are well defined. 
Consequently, we can allow delta distributions as arguments for the operators $P_h',Q_h$ from Section~\ref{sec:regOperator}.
The remainder of this section shows how to extend the analysis of regularized MINRES FEM from Section~\ref{sec:regularization} to point loads. 

We need the following technical lemma:
\begin{lemma}\label{deltaNormScaling} 
  Let $Q_h^\star\in\{P_h',Q_h\}$. We have that
  \begin{align*}
    \norm{Q_h^\star \delta_{x_0}}{} \eqsim h^{-d/2}.
  \end{align*}
  Moreover, if $d=2$ and $h$ is sufficiently small, then
  \begin{align*}
    \norm{Q_h^\star \delta_{x_0}}{H^{-1}(\Omega)} \lesssim |\log h|^{1/2}.
  \end{align*}
  If $d=3$ then
  \begin{align*}
    \norm{Q_h^\star \delta_{x_0}}{H^{-1}(\Omega)} \eqsim h^{-1/2}.
  \end{align*}
  
  Furthermore,
  \begin{align*}
    \norm{\delta_{x_0}-P_h'\delta_{x_0}}{(H^2(\Omega)\cap H_0^1(\Omega))'}\lesssim h^{2-d/2},
  \end{align*}
  and, if $s_z=z$ for all $z\in \VV_0$, then
  \begin{align*}
    \norm{\delta_{x_0}-Q_h\delta_{x_0}}{(H^2(\Omega)\cap H_0^1(\Omega))'}\lesssim h^{2-d/2}.
  \end{align*}
\end{lemma}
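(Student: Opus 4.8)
The plan is to treat the three families of bounds ($L^2$, $H^{-1}$, and $(H^2\cap H_0^1)'$) separately, exploiting the explicit formulas $J_h'\delta_{x_0}=\sum_{z\in\VV_0}\eta_z(x_0)\psi_z$ and $B_h'\delta_{x_0}=\sum_{T\in\TT}\eta_{b,T}(x_0)\chi_T$ (well defined because hat- and bubble-functions are continuous), together with $P_h'=J_h'+(1-J_h')B_h'$ and $Q_h=\Pi_h^0P_h'$. Throughout I take $h$ small enough that $x_0$ and $\Patch(x_0)$ lie in the interior; all estimates will be uniform in the position of $x_0$, in particular no cancellation is needed when $x_0$ sits on the skeleton, where every $\eta_{b,T}(x_0)$ vanishes and hence $B_h'\delta_{x_0}=0$.

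For the $L^2$ scaling I would first record the elementary facts $\norm{\psi_z}{}\eqsim h^{-d/2}$ (from the formula for $\psi_z$ and $|\Patch(z)|\eqsim h^d$), $\norm{\chi_T}{}\eqsim h^{d/2}$, and $|\eta_{b,T}(x_0)|\lesssim h^{-d}$ (since the normalisation forces $\gamma_T\eqsim h^{-d}$). The upper bound $\norm{Q_h^\star\delta_{x_0}}{}\lesssim h^{-d/2}$ then follows from $\norm{J_h'\delta_{x_0}}{}\lesssim h^{-d/2}$, $\norm{B_h'\delta_{x_0}}{}\lesssim h^{-d/2}$, the $L^2$-boundedness of $J_h'$ (the $L^2$-adjoint of the $L^2$-stable $J_h$, see~\eqref{eq:propJh}) and of $\Pi_h^0$. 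For the matching lower bound I would prove the mass identity $\int_\Omega Q_h^\star\delta_{x_0}=1$: one computes $\int_\Omega J_h'\delta_{x_0}=\sum_{z}\eta_z(x_0)\int\psi_z=\sum_z\eta_z(x_0)=1$ (using $\int\psi_z=1$ and the interior partition of unity) and $\int_\Omega(1-J_h')B_h'\delta_{x_0}=0$ (using $(J_h')^\ast=J_h$ and $J_h1=1$ near $x_0$), while $\Pi_h^0$ preserves elementwise integrals. Since $Q_h^\star\delta_{x_0}$ is supported in a set $S$ with $|S|\lesssim h^d$, Cauchy--Schwarz gives $1=\int_S Q_h^\star\delta_{x_0}\le\norm{Q_h^\star\delta_{x_0}}{}|S|^{1/2}\lesssim\norm{Q_h^\star\delta_{x_0}}{}h^{d/2}$, hence $\norm{Q_h^\star\delta_{x_0}}{}\gtrsim h^{-d/2}$.

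For the $H^{-1}$ estimates I would work directly with $g:=Q_h^\star\delta_{x_0}\in L^2(\Omega)$, using $\supp g\subset S$, $|S|\lesssim h^d$, $\int_S g=1$ and $\norm g{}\eqsim h^{-d/2}$. Writing $\bar v$ for the mean of $v\in H_0^1(\Omega)$ over $S$, I split $\ip g v=\bar v+\ip g{v-\bar v}$. The fluctuation obeys $|\ip g{v-\bar v}|\lesssim\norm g{}\,h\,\norm{\nabla v}{L^2(S)}\lesssim h^{1-d/2}\norm{\nabla v}{}$ by Cauchy--Schwarz and Poincaré on $S$, and the mean obeys $|\bar v|\le|S|^{-1/p}\norm v{L^p}$ by Hölder. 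In $d=2$, inserting the sharp constant growth $\norm v{L^p(\Omega)}\lesssim\sqrt p\,\norm{\nabla v}{}$ and optimising $p\eqsim|\log h|$ gives $|\bar v|\lesssim|\log h|^{1/2}\norm{\nabla v}{}$; in $d=3$, $\norm v{L^6}\lesssim\norm{\nabla v}{}$ gives $|\bar v|\lesssim h^{-1/2}\norm{\nabla v}{}$. Combining both contributions yields the claimed upper bounds. For the matching lower bound when $d=3$ I would test $g$ against an explicit fundamental-solution-type bump $v$ equal to $1$ on $\Patch(x_0)$ and decaying like $h/|x-x_0|$ inside a fixed ball, for which $\norm{\nabla v}{}\eqsim h^{1/2}$ while $\ip g v\eqsim1$, so that $\norm g{H^{-1}(\Omega)}\gtrsim h^{-1/2}$.

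For the duality estimates against $X:=H^2(\Omega)\cap H_0^1(\Omega)$ I would use the exact adjoint identity: for $v\in X$ (continuous since $d\le3$), $\ip{(1-P_h')\delta_{x_0}}v_{X'\times X}=\ip{\delta_{x_0}}{(1-P_h)v}=((1-P_h)v)(x_0)$, and likewise $\ip{(1-Q_h)\delta_{x_0}}v=((1-P_h\Pi_h^0)v)(x_0)$, using $(P_h')^\ast=P_h$ and $(Q_h)^\ast=P_h\Pi_h^0$. It then suffices to prove the pointwise bound $|((1-P_h)v)(x_0)|\lesssim h^{2-d/2}\norm v{H^2(\Omega)}$. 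I would pick $q\in\PP^1$ approximating $v$ on $\Patch(x_0)$, and combine the scaled Sobolev embedding on a set of diameter $h$ with Bramble--Hilbert ($\norm{v-q}{L^2(\Patch(x_0))}\lesssim h^2|v|_{H^2}$, $|v-q|_{H^1}\lesssim h|v|_{H^2}$) to get $\norm{v-q}{L^\infty(\Patch(x_0))}\lesssim h^{2-d/2}|v|_{H^2}$; since $P_h$ reproduces $\PP^1$ on interior patches, $(1-P_h)v=(1-P_h)(v-q)$ near $x_0$, and on the element $T_0\ni x_0$ an inverse estimate with the local $L^2$-stability of $P_h$ gives $\norm{P_h(v-q)}{L^\infty(T_0)}\lesssim h^{-d/2}\norm{v-q}{L^2}\lesssim h^{2-d/2}|v|_{H^2}$. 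For $Q_h$ the only change is that the required second-order local accuracy $\norm{v-P_h\Pi_h^0v}{L^2}\lesssim h^2\norm v{H^2}$ and local $\PP^1$-reproduction are exactly what the mesh condition $s_z=z$ provides through~\eqref{eq:clementIdentity} and Lemma~\ref{lem:clement}. The main obstacle is the quantitative $d=2$ bound: extracting the precise $|\log h|^{1/2}$ needs the sharp $\sqrt p$ growth of the Sobolev constant and the correct optimisation in $p$, and (for the $d=3$ lower bound) an explicit near-extremal test function; the remaining steps are routine scaling and Bramble--Hilbert arguments.
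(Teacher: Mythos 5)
Your proposal is correct, and it is worth noting where it coincides with and where it diverges from the paper's proof. For the $(H^2(\Omega)\cap H_0^1(\Omega))'$ estimates your route is essentially the paper's: the same adjoint identity $\ip{(1-P_h')\delta_{x_0}}{v}=((1-P_h)v)(x_0)$ (and $\ip{(1-Q_h)\delta_{x_0}}{v}=((1-P_h\Pi_h^0)v)(x_0)$ under the mesh condition), followed by a local scaling argument; the paper compresses this into one scaled-Sobolev line $|(1-P_h)v(x_0)|\lesssim |T|^{-1/2}(\norm{(1-P_h)v}{T}+h^2\norm{D^2v}{T})$, whereas you make the mechanism explicit via local $\PP^1$-reproduction, Bramble--Hilbert, and an inverse estimate --- a more detailed but equivalent argument. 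For the first two groups of estimates the paper does not give proofs at all: it cites Houston--Wihler/Scott for $\norm{Q_h^\star\delta_{x_0}}{}\eqsim h^{-d/2}$ and the scaling results of~\cite[Theorem~4.8]{amt99} for the $H^{-1}$ bounds. Your treatment replaces these citations by self-contained arguments, and the key devices are sound: the mass identity $\int_\Omega Q_h^\star\delta_{x_0}=1$ (using $\int\psi_z=1$, the interior partition of unity, and $J_h1=1$ near $x_0$) combined with Cauchy--Schwarz on the $\OO(h^d)$-support gives the lower bound $h^{-d/2}$; the mean/fluctuation splitting with the $\sqrt{p}$-growth of the two-dimensional Sobolev constant and the optimization $p\eqsim|\log h|$ yields the logarithmic factor; and the truncated fundamental-solution test function gives the matching $d=3$ lower bound. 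Two harmless imprecisions you should fix if writing this out: the support of $Q_h^\star\delta_{x_0}$ is not $\Patch(x_0)$ but the union of patches of the vertices of the element containing $x_0$ (a second-layer patch), so the $d=3$ lower-bound test function must equal $1$ on that slightly larger set, and likewise the Bramble--Hilbert polynomial $q$ and the local stability bound must be taken on the correspondingly enlarged neighborhood; both changes only affect constants.
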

\begin{proof}
  The proof of $\norm{Q_h^\star \delta_{x_0}}{} \eqsim h^{-d/2}$
  follows with the same techniques as, e.g., in~\cite[Section~3.1]{HoustonWihler12} (for $d=2$, the case $d=3$ is similar), see also~\cite[Theorem~1]{Scott75}. 

  The estimate in the negative norm follows from the local support of $Q_h^\star\delta_{x_0}$ and scaling properties of basis functions in $H^{-1}(\Omega)$ which can be found in~\cite[Theorem~4.8]{amt99}.

  Finally, we have for $v\in X:=H^2(\Omega)\cap H_0^1(\Omega)$ that
  \begin{align*}
    \ip{(1-P_h')\delta_{x_0}}v_{X'\times X}  = (1-P_h)v(x_0).
  \end{align*}
  Choose a $T\in\TT$ with $x_0\in \overline T$. A scaling argument and the approximation properties of $P_h$ show
  \begin{align*}
    |(1-P_h)v(x_0)| \lesssim \frac{1}{|T|^{1/2}}(\norm{(1-P_h)v}T + h^2\norm{D^2 v}T)
    \lesssim \frac{1}{|T|^{1/2}} h^2 \norm{v}{H^2(\Omega)}. 
  \end{align*}
  Note that $|T|^{-1/2} h^2 \eqsim h^{2-d/2}$.
  For $P_h'$ replaced by $Q_h$ we argue similarly by using the results from Section~\ref{sec:meshcondition}.
\end{proof}

\subsection{DPG with point loads}
We consider the DPG problem (see Section~\ref{sec:regularization:dpg} for details):
Find $\uu_h\in U_h$ such that
\begin{align}\label{dpg:poisson:delta}
  b(\uu_h,\Theta_h\ww_h) = F_h(\Theta_h\ww_h) \quad\forall \ww_h\in U_h.
\end{align}
Here, $F_h(\vv) := \ip{P_h'\delta_{x_0}}v$ for all $\vv=(v,\ttau)\in V$. 

\begin{theorem}\label{thm:dpg:delta}
  Suppose that $\Omega$ is convex. Let $u\in L^2(\Omega)$ denote the solution of~\eqref{delta:poisson} and let
  $\uu_h=(u_h,\ssigma_h,\widehat u_h,\widehat\sigma_h)\in U_h$ denote the solution of~\eqref{dpg:poisson:delta}. 
  We have (for $h$ sufficiently small)
  \begin{align*}
    \norm{u-u_h}{} \lesssim \begin{cases}
      |\log h|^{1/2}\, h & d=2, \\
      h^{1/2} & d=3,
    \end{cases}
  \end{align*}
  where the involved constant only depends on $\Omega$, the distance of $x_0$ to the boundary $\Gamma$, and the shape-regularity constant of $\TT$.
\end{theorem}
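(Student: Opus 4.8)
The plan is to mirror the proof of Theorem~\ref{thm:dpgPoisson}, with every $H^{-1}(\Omega)$-norm estimate replaced by the dimension-dependent scaling bounds collected in Lemma~\ref{deltaNormScaling}. Since $P_h'\delta_{x_0}\in L^2(\Omega)$ (it is a piecewise polynomial), the auxiliary problem $-\Delta\widetilde u = P_h'\delta_{x_0}$, $\widetilde u|_\Gamma=0$, has a unique solution $\widetilde u\in H^2(\Omega)\cap H_0^1(\Omega)$ by convexity of $\Omega$, and the solution $\uu_h$ of~\eqref{dpg:poisson:delta} is precisely the DPG approximation of $\widetilde\uu = (\widetilde u,\nabla\widetilde u,\tracegrad{}\widetilde u,\tracediv{}\nabla\widetilde u)$ with the $L^2$ load $P_h'\delta_{x_0}$. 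The triangle inequality $\norm{u-u_h}{}\leq \norm{u-\widetilde u}{}+\norm{\widetilde u-u_h}{}$ then splits the task, and I would treat the two terms separately.

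For the continuous error I would run the duality argument from Theorem~\ref{thm:dpgPoisson}. Let $v\in X := H^2(\Omega)\cap H_0^1(\Omega)$ solve $-\Delta v = u-\widetilde u$, so that $\norm{v}{H^2(\Omega)}\lesssim \norm{u-\widetilde u}{}$ by convexity. Using the distributional identity $-\Delta(u-\widetilde u) = \delta_{x_0}-P_h'\delta_{x_0}$ (note $v(x_0)$ is well defined since $H^2(\Omega)\hookrightarrow C^0$ for $d\leq 3$) together with the adjoint relation $\ip{P_h'\delta_{x_0}}{v}=(P_h v)(x_0)$, one obtains $\norm{u-\widetilde u}{}^2 = \ip{(1-P_h')\delta_{x_0}}{v}_{X'\times X}\leq \norm{(1-P_h')\delta_{x_0}}{X'}\norm{v}{X}$. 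The last bound of Lemma~\ref{deltaNormScaling} gives $\norm{(1-P_h')\delta_{x_0}}{X'}\lesssim h^{2-d/2}$, hence $\norm{u-\widetilde u}{}\lesssim h^{2-d/2}$, i.e., $h$ for $d=2$ and $h^{1/2}$ for $d=3$.

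For the discrete error I would reuse the splitting in~\eqref{dpg:poisson:regF:proof1}: combining the $L^2$-approximation property of $\Pi_h^0$, the supercloseness estimate $\norm{\Pi_h^0(\widetilde u-u_h)}{}\lesssim h\norm{\widetilde\uu-\uu_h}U$ (from the proof of Theorem~\ref{thm:dpgPoisson}, following~\cite[Proof of Theorem~3]{SupConv2}), and an inverse estimate, one gets $\norm{\widetilde u-u_h}{}\lesssim h\norm{\nabla\widetilde u}{}+h\norm{\widetilde\uu-\uu_h}U$. Here $\norm{\nabla\widetilde u}{}=\norm{P_h'\delta_{x_0}}{H^{-1}(\Omega)}$, and Proposition~\ref{apriori:poisson} with $s_\Omega=1$ (convexity) yields $\norm{\widetilde\uu-\uu_h}U\lesssim h\norm{P_h'\delta_{x_0}}{}$. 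Substituting the scalings of Lemma~\ref{deltaNormScaling} produces $\norm{\widetilde u-u_h}{}\lesssim h\norm{P_h'\delta_{x_0}}{H^{-1}(\Omega)}+h^2\norm{P_h'\delta_{x_0}}{}$, which evaluates to $h|\log h|^{1/2}$ for $d=2$ and $h^{1/2}$ for $d=3$.

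Combining the two contributions, the slower one dominates and gives the claimed rates; the dependence of the constant on $\mathrm{dist}(x_0,\Gamma)$ enters only through the $H^2$-regularity constants of $v$ and $\widetilde u$. The only genuinely delicate ingredient is the supercloseness bound for $\Pi_h^0(\widetilde u-u_h)$, which supplies the extra power of $h$ that upgrades the estimates; everything else is bookkeeping that propagates the $d$-dependent blow-up of $\norm{P_h'\delta_{x_0}}{}$ and $\norm{P_h'\delta_{x_0}}{H^{-1}(\Omega)}$ through results already established.
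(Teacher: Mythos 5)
Your proposal is correct and follows essentially the same route as the paper: the same auxiliary problem $-\Delta\widetilde u = P_h'\delta_{x_0}$, the same triangle-inequality split, the bound $\norm{\widetilde u-u_h}{}\lesssim h\norm{P_h'\delta_{x_0}}{H^{-1}(\Omega)}+h^2\norm{P_h'\delta_{x_0}}{}$ via~\eqref{dpg:poisson:regF:proof1} and Proposition~\ref{apriori:poisson}, and the final bookkeeping through Lemma~\ref{deltaNormScaling}. The only cosmetic difference is that you spell out the transposition/duality argument for $\norm{u-\widetilde u}{}$ (which is what underlies the paper's citation of Scott's proof, and mirrors the duality step already used in Theorem~\ref{thm:dpgPoisson}), whereas the paper simply invokes that reference.
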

\begin{proof}
  Consider the auxiliary problem: Find $\widetilde u\in H_0^1(\Omega)$ with
  \begin{align*}
    -\Delta \widetilde u = P_h'\delta_{x_0}.
  \end{align*}

  From the proof of~\cite[Theorem~1]{Scott75} together with Lemma~\ref{deltaNormScaling} we infer that
  \begin{align*}
    \norm{u-\widetilde u}{} \leq C(x_0) \norm{(1-P_h')\delta_{x_0}}{(H^2(\Omega)\cap H_0^1(\Omega))'} \lesssim h^{2-d/2},
  \end{align*}
  where the constant $C(x_0)$ depends on the distance of $x_0$ to the boundary $\Gamma$.

  Following the arguments as in the proof of Theorem~\ref{thm:dpgPoisson} we deduce that (see~\eqref{dpg:poisson:regF:proof1})
  \begin{align*}
    \norm{\widetilde u-u_h}{} \lesssim h \norm{P_h' \delta_{x_0}}{H^{-1}(\Omega)} + h^2\norm{P_h'\delta_{x_0}}{}.
  \end{align*}
  Combination of all estimates and Lemma~\ref{deltaNormScaling} conclude the proof.
\end{proof}

\subsection{FOSLS with point loads}
We consider the problem
\begin{align}\label{lsq:minimization:delta}
  (u_h,\ssigma_h) = \argmin_{\vv_h=(v_h,\ttau_h)\in W_h}\big( \norm{\nabla v_h-\ttau_h}{}^2 + \norm{\div\ttau_h+Q_h\delta_{x_0}}{}^2 \big).
\end{align}

\begin{theorem}\label{thm:lsq:delta}
  Suppose that $\Omega$ is convex. Let $u\in L^2(\Omega)$ denote the solution of~\eqref{delta:poisson} and let
  $\uu_h=(u_h,\ssigma_h)\in W_h$ denote the solution of~\eqref{lsq:minimization:delta}. 
  If $s_z=z$ for all $z\in\VV_0$, then
  \begin{align*}
    \norm{u-u_h}{} \lesssim h^{2-d/2},
  \end{align*}
  where the involved constant only depends on $\Omega$, the distance of $x_0$ to the boundary $\Gamma$, and the shape-regularity constant of $\TT$.
\end{theorem}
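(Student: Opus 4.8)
The plan is to reduce the point-load FOSLS problem to the regularized FOSLS already analyzed, exactly mirroring the DPG proof of Theorem~\ref{thm:dpg:delta} but now using the $L^2$-type duality machinery from Theorem~\ref{thm:ls:L2} and Corollary~\ref{cor:ls:L2}. First I would introduce the auxiliary problem
\begin{align*}
  -\Delta\widetilde u = Q_h\delta_{x_0}, \quad \widetilde u|_\Gamma = 0,
\end{align*}
and observe that $\uu_h=(u_h,\ssigma_h)$ is precisely the FOSLS approximation of $\widetilde\uu=(\widetilde u,\nabla\widetilde u)$, since the load $Q_h\delta_{x_0}\in L^2(\Omega)$ is admissible. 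By the triangle inequality,
\begin{align*}
  \norm{u-u_h}{} \leq \norm{u-\widetilde u}{} + \norm{\widetilde u-u_h}{}.
\end{align*}

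For the consistency term $\norm{u-\widetilde u}{}$, the key point is that $u-\widetilde u$ solves $-\Delta(u-\widetilde u)=(1-Q_h)\delta_{x_0}$, so a standard duality argument against the dual solution $w\in H^2(\Omega)\cap H_0^1(\Omega)$ of $-\Delta w=u-\widetilde u$ (using convexity for the $H^2$ shift) gives
\begin{align*}
  \norm{u-\widetilde u}{}^2 = \ip{(1-Q_h)\delta_{x_0}}{w} \leq \norm{(1-Q_h)\delta_{x_0}}{(H^2(\Omega)\cap H_0^1(\Omega))'}\norm{w}{H^2(\Omega)},
\end{align*}
and since $\norm{w}{H^2(\Omega)}\lesssim\norm{u-\widetilde u}{}$, the last factor from Lemma~\ref{deltaNormScaling} (using the mesh condition $s_z=z$) yields $\norm{u-\widetilde u}{}\lesssim h^{2-d/2}$. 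This is exactly the analogue of the step in Theorem~\ref{thm:dpg:delta}, and it is here that the hypothesis $s_z=z$ enters, because that is the assumption under which the sharp bound on $\norm{(1-Q_h)\delta_{x_0}}{(H^2\cap H_0^1)'}$ in Lemma~\ref{deltaNormScaling} holds.

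For the discrete term $\norm{\widetilde u-u_h}{}$, I would invoke the duality analysis of Theorem~\ref{thm:ls:L2} applied with $f$ replaced by the $L^2$ load $Q_h\delta_{x_0}$ (and $Q_h^\star$ reading as the identity on this already-regularized load, or equivalently by Remark~\ref{rem:extendLSL2}). The resulting bound has the shape $\norm{\widetilde u-u_h}{}\lesssim h\cdot(\norm{\widetilde u-u_h}{H^1(\Omega)}+\norm{\nabla\widetilde u-\ssigma_h}{})$, and by Proposition~\ref{apriori:lsq2} the FOSLS $H^1$-error satisfies $\norm{\widetilde u-u_h}{H^1(\Omega)}+\norm{\nabla\widetilde u-\ssigma_h}{}\lesssim h^{s_\Omega}\norm{Q_h\delta_{x_0}}{H^{-1+s_\Omega}(\Omega)}$, which with $s_\Omega=1$ on the convex domain and the negative-norm estimate from Lemma~\ref{deltaNormScaling} produces a factor scaling like $|\log h|^{1/2}$ (for $d=2$) or $h^{-1/2}$ (for $d=3$). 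Combining gives $\norm{\widetilde u-u_h}{}\lesssim h^{2}|\log h|^{1/2}$ or $h^{3/2}$, both of which are dominated by the consistency term $h^{2-d/2}$ (namely $h$ for $d=2$ and $h^{1/2}$ for $d=3$).

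The main obstacle is organizing the duality estimate for $\norm{\widetilde u-u_h}{}$ so that the powers of $h$ track correctly: one must verify that the $L^2$-superconvergence mechanism of Theorem~\ref{thm:ls:L2} genuinely applies to the singularly-scaled load $Q_h\delta_{x_0}$, whose $L^2$ norm blows up like $h^{-d/2}$, and confirm that the discrete contribution is asymptotically negligible against $h^{2-d/2}$. Once the bookkeeping of the negative and $L^2$ norms of $Q_h\delta_{x_0}$ from Lemma~\ref{deltaNormScaling} is carried through, the final assertion follows by the triangle inequality, with the stated dependence of the constant on $\Omega$, on $\mathrm{dist}(x_0,\Gamma)$ (entering through the Scott-type constant $C(x_0)$), and on shape-regularity.
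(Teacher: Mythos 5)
Your overall strategy coincides with the paper's: its proof of Theorem~\ref{thm:lsq:delta} is precisely ``follow Theorem~\ref{thm:ls:L2}, combined with Lemma~\ref{deltaNormScaling} and the idea of Theorem~\ref{thm:dpg:delta}'', which is your decomposition (auxiliary problem $-\Delta\widetilde u=Q_h\delta_{x_0}$, consistency term bounded in the $(H^2(\Omega)\cap H_0^1(\Omega))'$ norm under the mesh condition, discrete term via the $L^2$ duality mechanism of Theorem~\ref{thm:ls:L2}). Your handling of the consistency term is sound, and arguably cleaner than the paper's appeal to Scott's argument: on a convex domain the transposition identity $\norm{u-\widetilde u}{}^2=\ip{(1-Q_h)\delta_{x_0}}{w}$ together with $\norm{w}{H^2(\Omega)}\lesssim\norm{u-\widetilde u}{}$ and Lemma~\ref{deltaNormScaling} suffices, and it even avoids the distance-dependent constant $C(x_0)$ in that step.

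However, the bookkeeping of the discrete term contains a genuine error. With $s_\Omega=1$ on a convex domain, Proposition~\ref{apriori:lsq2} applied to the piecewise constant load $Q_h\delta_{x_0}$ gives
\begin{align*}
  \norm{\widetilde u-u_h}{H^1(\Omega)}+\norm{\nabla\widetilde u-\ssigma_h}{}
  \lesssim h\,\norm{Q_h\delta_{x_0}}{H^{-1+s_\Omega}(\Omega)}
  = h\,\norm{Q_h\delta_{x_0}}{},
\end{align*}
i.e., the norm appearing on the right is the $L^2(\Omega)$ norm, which by Lemma~\ref{deltaNormScaling} scales like $h^{-d/2}$ --- not the $H^{-1}(\Omega)$ norm, whose scalings $|\log h|^{1/2}$ ($d=2$) and $h^{-1/2}$ ($d=3$) you substituted. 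You cannot pair the factor $h^{s_\Omega}$ with the $H^{-1}$ norm: in the scale $h^s\norm{\cdot}{H^{-1+s}(\Omega)}$ these correspond to different values of $s$, and there is no estimate combining the best of both. Indeed your intermediate claims are too strong to be true: for $d=2$ the $H^1$-error in approximating the (regularized) Green's function is genuinely $O(1)$, not $O(h|\log h|^{1/2})$, since $\norm{D^2\widetilde u}{}\eqsim h^{-1}$. Fortunately the slip is harmless for the final statement: inserting the correct value, the duality step gives $\norm{\widetilde u-u_h}{}\lesssim h\cdot h\cdot h^{-d/2}=h^{2-d/2}$, which is of the same order as (rather than strictly dominated by) the consistency term, and the triangle inequality still yields $\norm{u-u_h}{}\lesssim h^{2-d/2}$ as claimed.
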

\begin{proof}
  The proof follows the lines of the proof of Theorem~\ref{thm:ls:L2} in combination with the results from Lemma~\ref{deltaNormScaling} and the idea from the proof of Theorem~\ref{thm:dpg:delta} with obvious modifications.
\end{proof}

\section{Numerical examples}\label{sec:ex}

\subsection{Example DPG for Poisson}\label{sec:ex:DPGPoisson}
We consider the Poisson problem with manufactured solution
\begin{align}\label{ex:solutionPoisson}
  u(x,y) = |x-y|^{3/4} \sin(\pi x)\sin(\pi y)  \quad (x,y)\in \Omega:=(0,1)^2.
\end{align}
One verifies that $u\in H^{1+1/4-\eps}(\Omega)$ and $f:=-\Delta u \in H^{-1+1/4-\varepsilon}(\Omega)$ for all $\varepsilon>0$.
We consider the DPG method with regularization operator $Q_h^\star = P_h'$. Results are shown in the left plot of Figure~\ref{fig:DPGPoissonUniform}. 
We visualize $\eta$ (error estimator), $\norm{u-u_h}{}$, $\norm{\ssigma-\ssigma_h}{}$, and $\norm{u-u_h^\star}{}$ where $u_h^\star$ is the post-processed solution (Section~\ref{sec:postProc}).
The dotted black lines correspond to $\OO(h^{1/4})$, $\OO(h)$, $\OO(h^{1+1/4})$. The expected optimal rates are (omitting $\eps$)
\begin{align*}
  \norm{u-u_h}{} = \OO(h), \quad \norm{\ssigma-\ssigma_h}{} = \OO(h^{1/4}), \quad \norm{u-u_h^\star}{} = \OO(h^{1+1/4}),
\end{align*}
which are indeed observed in the experiment. They perfectly fit the theory (Theorem~\ref{thm:dpgPoisson} and Theorem~\ref{thm:dpgPostProcess}).
\begin{figure}
  \begin{center}
    \begin{tikzpicture}
\begin{loglogaxis}[
    title={DPG},
    width=0.49\textwidth,
cycle list/Dark2-6,
cycle multiindex* list={
mark list*\nextlist
Dark2-6\nextlist},
every axis plot/.append style={ultra thick},
xlabel={degrees of freedom},
grid=major,
legend entries={\small $\eta$,\small $\|u-u_h\|$,\small $\|u-u^\star_h\|$,\small $\|\ssigma-\ssigma_h\|$},
legend pos=south west,
]
\addplot table [x=dofDPG,y=estDPG] {data/DPGPhprime.dat};
\addplot table [x=dofDPG,y=errU] {data/DPGPhprime.dat};
\addplot table [x=dofDPG,y=errUtilde] {data/DPGPhprime.dat};
\addplot table [x=dofDPG,y=errSigma] {data/DPGPhprime.dat};
\addplot [black,dotted,mark=none] table [x=dofDPG,y expr={0.35*sqrt(\thisrowno{1})^(-1-1/4)}] {data/DPGPhprime.dat};
\addplot [black,dotted,mark=none] table [x=dofDPG,y expr={0.6*sqrt(\thisrowno{1})^(-1)}] {data/DPGPhprime.dat};
\addplot [black,dotted,mark=none] table [x=dofDPG,y expr={0.8*sqrt(\thisrowno{1})^(-1/4)}] {data/DPGPhprime.dat};
\end{loglogaxis}
\end{tikzpicture}
\begin{tikzpicture}
\begin{loglogaxis}[
    title={FOSLS},
    width=0.49\textwidth,
cycle list/Dark2-6,
cycle multiindex* list={
mark list*\nextlist
Dark2-6\nextlist},
every axis plot/.append style={ultra thick},
xlabel={degrees of freedom},
grid=major,
legend entries={\small $\eta$,\small $\|u-u_h\|$,\small $\|\nabla(u-u_h)\|$,\small $\|\ssigma-\ssigma_h\|$},
legend pos=south west,
]
\addplot table [x=dofLSQ,y=estLSQ] {data/LSQQh.dat};
\addplot table [x=dofLSQ,y=errUL2] {data/LSQQh.dat};
\addplot table [x=dofLSQ,y=errUH1] {data/LSQQh.dat};
\addplot table [x=dofLSQ,y=errSigmaL2] {data/LSQQh.dat};
\addplot [black,dotted,mark=none] table [x=dofLSQ,y expr={0.35*sqrt(\thisrowno{1})^(-1-1/4)}] {data/LSQQh.dat};
\addplot [black,dotted,mark=none] table [x=dofLSQ,y expr={0.6*sqrt(\thisrowno{1})^(-1)}] {data/LSQQh.dat};
\addplot [black,dotted,mark=none] table [x=dofLSQ,y expr={0.8*sqrt(\thisrowno{1})^(-1/4)}] {data/LSQQh.dat};
\end{loglogaxis}
\end{tikzpicture}
  \end{center}
  \caption{Errors and estimators for the DPG (left) and FOSLS (right) methods for the problem from Sections~\ref{sec:ex:DPGPoisson},~\ref{sec:ex:LSPoisson}.
  The dotted black lines correspond to $\OO(h^{1/4})$, $\OO(h^1)$ and $\OO(h^{1+1/4})$.}\label{fig:DPGPoissonUniform}
\end{figure}
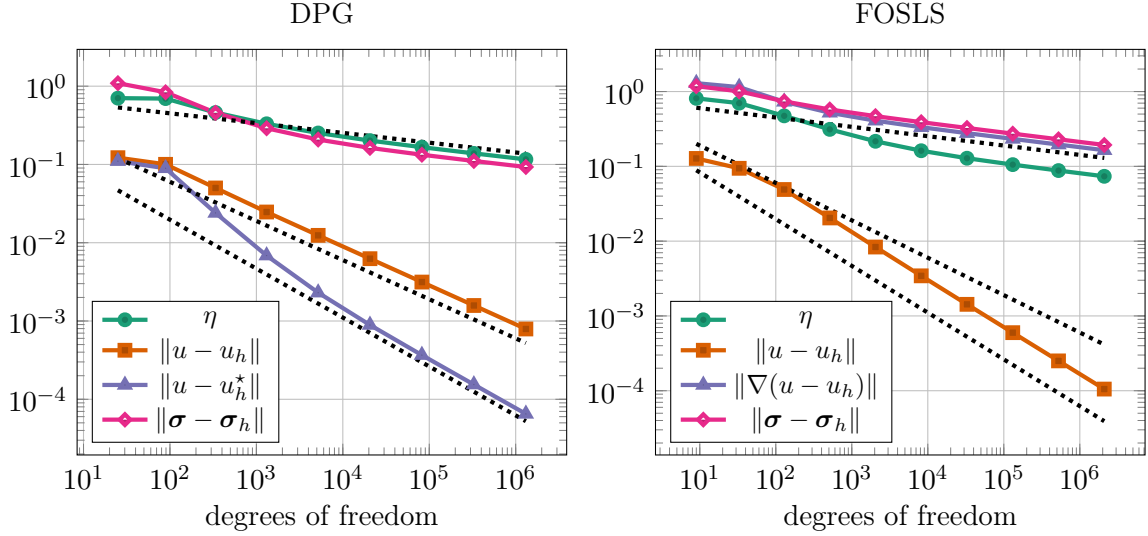

\subsection{Example FOSLS for Poisson}\label{sec:ex:LSPoisson}
We consider the regularized FOSLS~\eqref{lsq:minimization:Qhstar} and the same setup as in Section~\ref{sec:ex:DPGPoisson}.
Results are presented in the right plot of Figure~\ref{fig:DPGPoissonUniform} where we plot
$\norm{\nabla(u-u_h)}{}$, $\norm{\ssigma-\ssigma_h}{}$, $\eta$ (error estimator), $\norm{u-u_h}{}$.
From Theorem~\ref{thm:LSregularized} we expect that (omitting $\eps$)
\begin{align*}
  \norm{\nabla(u-u_h)}{} + \norm{\ssigma-\ssigma_h}{} = \OO(h^{1/4}),
\end{align*}
which is also observed.
Moreover, we find that $\norm{u-u_h}{} = \OO(h^{1+1/4})$ which is the optimal rate for the $L^2(\Omega)$ error.
In this experiment the meshes satisfy the condition $s_z = z$ for all $z\in \VV_0$ so that the optimal rate for $\norm{u-u_h}{}$ is covered by our theory (see Remark~\ref{rem:extendLSL2}).

\subsection{Example FOSLS with and without optimal $L^2(\Omega)$ rate}\label{sec:ex:L2notoptimal}
We consider the domain $\Omega = (-1,1)^2$ and the manufactured solution $u(x,y) = v(x)w(y)$ where 
\begin{align*}
  v(x) = x \,|x|^{1/2+1/128}(1-x^2), \qquad w(y) = 1-y^2.
\end{align*}
Note that $u\in H^2(\Omega)$, particularly,
\begin{align*}
  f(x,y) &= -\Delta u(x,y) = \frac{\sign(x)\big(144129 \, x^2-12545 \big)}{65536 \, |x|^{63/128}}w(y)+2v(x).
\end{align*}
One verifies that $f\in L^2(\Omega)$ but $f\notin H^t(\Omega)$ for $t\geq 1/128$.

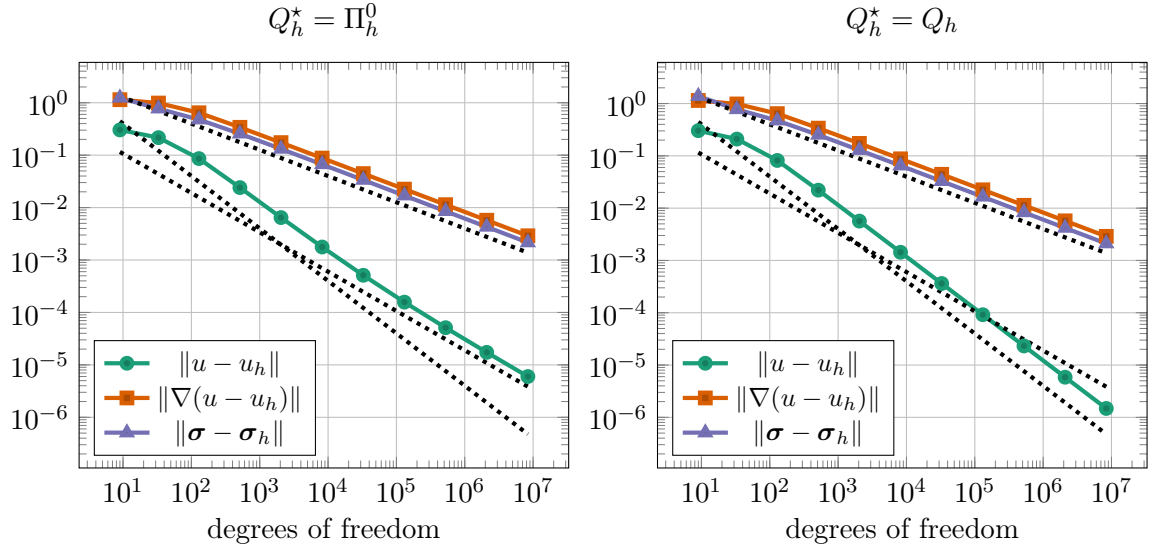
\begin{figure}
  \begin{center}
    \begin{tikzpicture}
\begin{loglogaxis}[
    title={$Q_h^\star = \Pi_h^0$},
    width=0.49\textwidth,
cycle list/Dark2-6,
cycle multiindex* list={
mark list*\nextlist
Dark2-6\nextlist},
every axis plot/.append style={ultra thick},
xlabel={degrees of freedom},
grid=major,
legend entries={\small $\|u-u_h\|$,\small $\|\nabla(u-u_h)\|$,\small $\|\ssigma-\ssigma_h\|$},
legend pos=south west,
]
\addplot table [x=dofLSQ,y=errUL2] {data/LSQerrL2Pih.dat};
\addplot table [x=dofLSQ,y=errUH1] {data/LSQerrL2Pih.dat};
\addplot table [x=dofLSQ,y=errSigmaL2] {data/LSQerrL2Pih.dat};
\addplot [black,dotted,mark=none] table [x=dofLSQ,y expr={4*sqrt(\thisrowno{1})^(-1)}] {data/LSQerrL2Pih.dat};
\addplot [black,dotted,mark=none] table [x=dofLSQ,y expr={0.6*sqrt(\thisrowno{1})^(-3/2)}] {data/LSQerrL2Pih.dat};
\addplot [black,dotted,mark=none] table [x=dofLSQ,y expr={4*sqrt(\thisrowno{1})^(-2)}] {data/LSQerrL2Pih.dat};
\end{loglogaxis}
\end{tikzpicture}
\begin{tikzpicture}
\begin{loglogaxis}[
    title={$Q_h^\star = Q_h$},
    width=0.49\textwidth,
cycle list/Dark2-6,
cycle multiindex* list={
mark list*\nextlist
Dark2-6\nextlist},
every axis plot/.append style={ultra thick},
xlabel={degrees of freedom},
grid=major,
legend entries={\small $\|u-u_h\|$,\small $\|\nabla(u-u_h)\|$,\small $\|\ssigma-\ssigma_h\|$},
legend pos=south west,
]
\addplot table [x=dofLSQ,y=errUL2] {data/LSQerrL2Qh.dat};
\addplot table [x=dofLSQ,y=errUH1] {data/LSQerrL2Qh.dat};
\addplot table [x=dofLSQ,y=errSigmaL2] {data/LSQerrL2Qh.dat};
\addplot [black,dotted,mark=none] table [x=dofLSQ,y expr={4*sqrt(\thisrowno{1})^(-1)}] {data/LSQerrL2Qh.dat};
\addplot [black,dotted,mark=none] table [x=dofLSQ,y expr={0.6*sqrt(\thisrowno{1})^(-3/2)}] {data/LSQerrL2Qh.dat};
\addplot [black,dotted,mark=none] table [x=dofLSQ,y expr={4*sqrt(\thisrowno{1})^(-2)}] {data/LSQerrL2Qh.dat};
\end{loglogaxis}
\end{tikzpicture}
  \end{center}
  \caption{Errors for the FOSLS~\eqref{lsq:minimization:Qhstar} with $Q_h^\star =\Pi_h^0$ (left) and $Q_h^\star=Q_h$ (right) for the problem from Section~\ref{sec:ex:L2notoptimal}. The black dotted lines indicate $\OO(h)$, $\OO(h^{3/2})$ and $\OO(h^2)$.}\label{fig:LSL2notoptimal}
\end{figure}

Consider the solution $\uu_h=(u_h,\ssigma_h)\in W_h$ of the FOSLS~\eqref{lsq:minimization:Qhstar} with $Q_h^\star =\Pi_h^0$ and $Q_h^\star = Q_h$.
In Figure~\ref{fig:LSL2notoptimal} we plot the errors $\norm{\nabla(u-u_h)}{}$, $\norm{\ssigma-\ssigma_h}{}$, $\norm{u-u_h}{}$.
The three dotted lines indicate $\OO(h)$, $\OO(h^{3/2})$, $\OO(h^2)$. 
The left plot shows the results for $Q_h^\star = \Pi_h^0$ and the right plot shows the results for $Q_h^\star = Q_h$. 
For $Q_h^\star = \Pi_h^0$ we note that the error $\norm{u-u_h}{}$, although pre-asymptotically converges at the optimal rate, i.e., $\OO(h^2)$, seems to converge at $\OO(h^{3/2})$. 
For $Q_h^\star = Q_h$ we find optimal rates also for $\norm{u-u_h}{}$. We note that the meshes satisfy the condition $s_z=z$ for all $z\in \VV_0$, so that the optimal rate for $\norm{u-u_h}{}$ is covered by the theory (see Corollary~\ref{cor:ls:L2}).

We conclude that even though $f\in L^2(\Omega)$, the regularization approach ($Q_h^\star = Q_h$) delivers more accurate solutions compared to the standard method ($Q_h^\star = \Pi_h^0$). 

\subsection{Example DPG for Poisson with point load}\label{sec:ex:DPGpoint}
Let $\Omega = (-1,1)^2$, $x_0=(0,0)$ and $u\in L^2(\Omega)$ be the solution of~\eqref{delta:poisson}. 
Let $\uu_h=(u_h,\ssigma_h,\widehat u_h,\widehat\sigma_h)\in U_h$ denote the solution of~\eqref{dpg:poisson:delta}. 
Figure~\ref{fig:pointLoads} (left) shows the error $\norm{u-u_h}{}$ which numerically confirms the results from Theorem~\ref{thm:dpg:delta}. The black dotted line corresponds to $\OO(h)$. 
We also plot the error of the postprocessed solution $\norm{u-u_h^\star}{}$ (Section~\ref{sec:postProc}) which seems to give slightly better approximations (we have not analyzed convergence of the postprocessed solution for point loads in this work). 

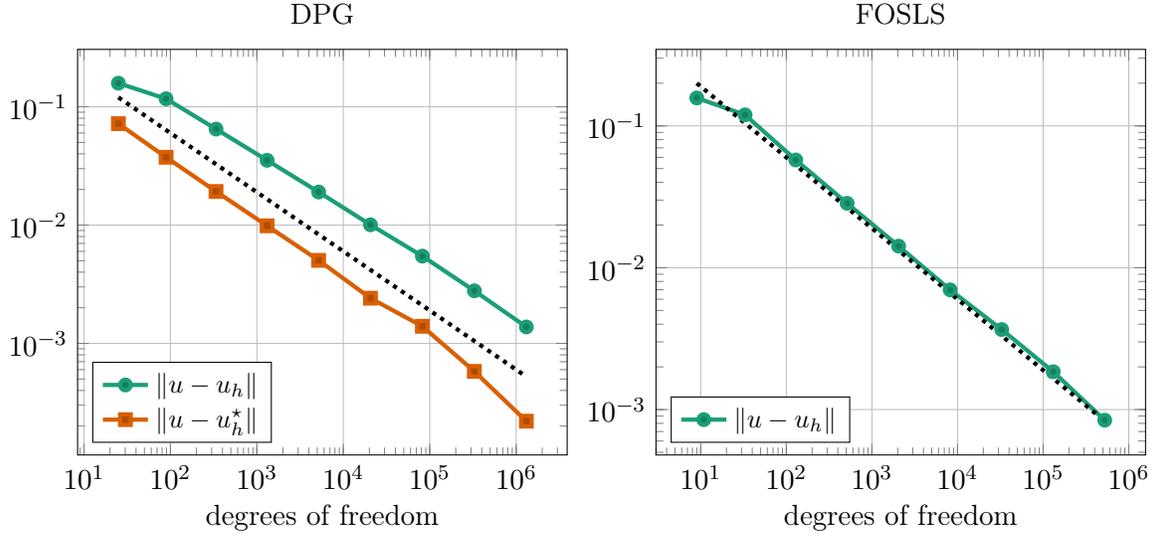
\begin{figure}
  \begin{center}
    \begin{tikzpicture}
\begin{loglogaxis}[
    title={DPG},
    width=0.49\textwidth,
cycle list/Dark2-6,
cycle multiindex* list={
mark list*\nextlist
Dark2-6\nextlist},
every axis plot/.append style={ultra thick},
xlabel={degrees of freedom},
grid=major,
legend entries={\small $\|u-u_h\|$,\small $\|u-u_h^\star\|$},
legend pos=south west,
]
\addplot table [x=dofDPG,y=errU] {data/DPGpointLoad.dat};
\addplot table [x=dofDPG,y=errUtilde] {data/DPGpointLoad.dat};
\addplot [black,dotted,mark=none] table [x=dofDPG,y expr={0.6*sqrt(\thisrowno{1})^(-1)}] {data/DPGpointLoad.dat};
\end{loglogaxis}
\end{tikzpicture}
\begin{tikzpicture}
\begin{loglogaxis}[
    title={FOSLS},
    width=0.49\textwidth,
cycle list/Dark2-6,
cycle multiindex* list={
mark list*\nextlist
Dark2-6\nextlist},
every axis plot/.append style={ultra thick},
xlabel={degrees of freedom},
grid=major,
legend entries={\small $\|u-u_h\|$},
legend pos=south west,
]
\addplot table [x=dofLSQ,y=errUL2] {data/LSQpointLoad.dat};
\addplot [black,dotted,mark=none] table [x=dofLSQ,y expr={0.6*sqrt(\thisrowno{1})^(-1)}] {data/LSQpointLoad.dat};
\end{loglogaxis}
\end{tikzpicture}
  \end{center}
  \caption{Errors for the DPG~\eqref{dpg:poisson:delta} (left) and FOSLS~\eqref{lsq:minimization:delta} (right) method with point source (Sections~\ref{sec:ex:DPGpoint} and~\ref{sec:ex:LSpoint}). 
  The black dotted line indicates $\OO(h)$.}\label{fig:pointLoads}
\end{figure}

\subsection{Example FOSLS for Poisson with point load}\label{sec:ex:LSpoint}
We consider the same problem setup as in Section~\ref{sec:ex:DPGpoint} and let $\uu_h=(u_h,\ssigma_h)$ denote the solution of~\eqref{lsq:minimization:delta}. 
We note that the meshes satisfy the condition $s_z=z$ for all $z\in\VV_0$, so that by Theorem~\ref{thm:lsq:delta} we expect $\norm{u-u_h}{}=\OO(h)$. This is indeed observed in Figure~\ref{fig:pointLoads}.

\bibliographystyle{abbrv}
\bibliography{literature}

\appendix
\section{Extension}\label{sec:extension}
In this section we study a possible extension of the regularized MINRES methods.
For the sake of brevity we only consider the extension of the regularized FOSLS to the problem
\begin{align}\label{poisson:general}
\begin{split}
  -\div A\nabla u + K u &= f \quad\text{in } \Omega, \\
  u|_{\Gamma_D} &= 0, \\ 
  \normal\cdot A\nabla u |_{\Gamma_N}&= 0,
\end{split}
\end{align}
where $\Gamma_D$, $\Gamma_N$ denotes a disjoint decomposition of the boundary $\Gamma$ with surface measure $|\Gamma_D|>0$. 
Here, $A\in L^\infty(\Omega;\R_\mathrm{sym}^{d\times d})$ is uniformly positive definite and bounded, i.e., there exist $c>0$, $C>0$ with
\begin{align*}
 c y^\top y\leq y^\top A(x)y \leq C y^\top y \quad\text{for all }y\in\R^d, \text{ and } x\in\Omega \text{ a.e.}
\end{align*}
Different choices for the bounded operator $K\colon H^1(\Omega)\to L^2(\Omega)$ are possible, e.g.,
\begin{align*}
  Ku = \aalpha \cdot\nabla u + \beta u, \quad \aalpha \in L^\infty(\Omega)^d, \, \beta\in L^\infty(\Omega),
\end{align*}
see~\cite[Eq.(2.6)]{CaiLazarovManteuffelMcCormickPart1}.
The choice $A=\operatorname{Id}$, $\aalpha=0$, $\beta <0$ corresponds to the Helmholtz problem. 
With 
\begin{align*}
W:= H_D^1(\Omega) \times \HdivsetN\Omega := \set{v\in H^1(\Omega)}{v|_{\Gamma_D}=0}\times\set{\ttau\in\Hdivset\Omega}{\ttau\cdot\normal|_{\Gamma_N}=0}
\end{align*}
we consider the first-order reformulation: Find $\uu=(u,\ssigma)\in W$ such that
\begin{align*}
  -\div\ssigma + Ku &= f, \\
  A\nabla u -\ssigma &=0
\end{align*}
and with $W_h := (\PP^1(\TT)\cap \RT^0(\TT))\cap W$ the minimization problem
\begin{align}\label{fosls:general}
  (u_h,\ssigma_h) = \argmin_{\vv_h=(v_h,\ttau_h)\in W_h} \norm{\div\ttau_h-Kv_h+f}{}^2 + \norm{A\nabla v_h-\ttau_h}{}^2.
\end{align}
We assume that problem~\eqref{poisson:general} induces an isomorfism, i.e., for each $f\in H_D^{-1}(\Omega):=(H_D^1(\Omega))'$ there exists a unique $u\in H_D^1(\Omega)$ with
\begin{align*}
  \norm{u}{H^1(\Omega)} \lesssim \norm{f}{H_D^{-1}(\Omega)}.
\end{align*}
In~\cite[Theorem~3.1]{CaiLazarovManteuffelMcCormickPart1} well-posedness of the FOSLS~\eqref{fosls:general} was shown for $f\in L^2(\Omega)$. 
To conclude convergence rates we additionally assume that there exists $s_\Omega\in[0,1]$ such that
\begin{align}\label{poisson:regshift:general}
  \norm{u}{H^{1+s}(\Omega)} + \norm{A\nabla u}{H^{s}(\Omega)} \lesssim \norm{f}{H_D^{-1+s}(\Omega)} \quad\text{for } s\in[0,s_\Omega].
\end{align}
Here, $H_D^{-t}(\Omega) = (H_D^t(\Omega))'$ and $H_D^t(\Omega)$ is defined by interpolation of $L^2(\Omega)$ and $H_D^1(\Omega)$. 

To define a regularized FOSLS we redefine the operator $J_h$ from Section~\ref{sec:regOperator}. With $\VV_D = \VV\setminus\Gamma_N$ we set
\begin{align*}
  J_h v:= \sum_{z\in \VV_D} \ip{\psi_z}v \eta_z, \quad\text{and}\quad P_h := J_h + B_h(1-J_h).
\end{align*}
Recall from Section~\ref{sec:regOperator} that $Q_h=\Pi_h^0P_h'$. It is straightforward to verify that $J_h$, $P_h$, $P_h'$, $Q_h$ satisfy properties corresponding to the ones in Section~\ref{sec:regOperator} (Proposition~\ref{prop:projHoneDual}, Lemma~\ref{lem:propQhstar}, Lemma~\ref{lem:propPh}).
Moreover, we use the notation $\Pi_h^\mathrm{div}$ for the projector $\HdivsetN\Omega\to \RT^0(\TT)\cap \HdivsetN\Omega$ from~\cite[Section~3]{egsv2019} which has the properties~\eqref{eq:propertiesPihDiv} with $\ttau\in\Hdivset\Omega$ replaced by $\ttau\in\HdivsetN\Omega$. 

The regularized FOSLS reads: 
\begin{align}\label{lsq:reg:general}
  \uu_h=(u_h,\ssigma_h) = \argmin_{\vv_h=(v_h,\ttau_h)\in W_h} \norm{\div\ttau_h-Kv_h+Q_hf}{}^2 + \norm{A\nabla v_h-\ttau_h}{}^2.
\end{align}

We show how to extend Theorem~\ref{thm:LSregularized} to the problem described in this section.
\begin{theorem}\label{thm:LSregularized:general}
  For $s\in[0,1]$ and $f\in H_D^{-1+s}(\Omega)$, let $u\in H_D^1(\Omega)$ denote the solution of~\eqref{poisson:general} with right-hand side $f$ and let $\uu_h$ denote the solution of~\eqref{lsq:reg:general}.
  The estimate
  \begin{align*}
    \norm{u-u_h}{H^1(\Omega)} + \norm{A\nabla u-\ssigma_h}{} \lesssim h^{\min\{s_\Omega,s\}}\norm{f}{H_D^{-1+\min\{s_\Omega,s\}}(\Omega)}
  \end{align*}
  holds true.
\end{theorem}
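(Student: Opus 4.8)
The plan is to follow the proof of Theorem~\ref{thm:LSregularized} line by line, replacing the Laplacian by the general operator $-\div A\nabla\cdot + K$ and the Poisson a priori estimates (Propositions~\ref{apriori:lsq1} and~\ref{apriori:lsq2}) by their counterparts for~\eqref{poisson:general}. First I would introduce the auxiliary problem: let $\widetilde u\in H_D^1(\Omega)$ solve $-\div A\nabla\widetilde u + K\widetilde u = Q_h f$ subject to the boundary conditions of~\eqref{poisson:general}, and set $\widetilde\ssigma := A\nabla\widetilde u$, $\widetilde\uu := (\widetilde u,\widetilde\ssigma)\in W$. Since $Q_h f\in L^2(\Omega)$, the pair $\widetilde\uu$ annihilates both residuals in~\eqref{lsq:reg:general}, so it is the exact minimizer over $W$ and $\uu_h$ is precisely its discrete FOSLS approximation in $W_h$.

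Next I would split the error by the triangle inequality into a consistency part $\norm{u-\widetilde u}{H^1(\Omega)} + \norm{A\nabla(u-\widetilde u)}{}$ and a discretization part $\norm{\widetilde u-u_h}{H^1(\Omega)} + \norm{\widetilde\ssigma-\ssigma_h}{}$. For the consistency part, $u-\widetilde u$ solves~\eqref{poisson:general} with datum $(1-Q_h)f$, so the assumed isomorphism property, the boundedness of $A$, and the extension of Lemma~\ref{lem:propQhstar} give, with $t:=\min\{s_\Omega,s\}$,
\begin{align*}
  \norm{u-\widetilde u}{H^1(\Omega)} + \norm{A\nabla(u-\widetilde u)}{} \lesssim \norm{(1-Q_h)f}{H_D^{-1}(\Omega)} \lesssim h^t\norm{f}{H_D^{-1+t}(\Omega)}.
\end{align*}
For the discretization part I would establish the analog of Proposition~\ref{apriori:lsq2} for~\eqref{poisson:general}, namely that for data $g\in L^2(\Omega)$ the reduced FOSLS error satisfies
\begin{align*}
  \norm{\widetilde u-u_h}{H^1(\Omega)} + \norm{\widetilde\ssigma-\ssigma_h}{}\lesssim h^{s_\Omega}\norm{\Pi_h^0 g}{H_D^{-1+s_\Omega}(\Omega)} + h\norm{(1-\Pi_h^0)g}{},
\end{align*}
with the regularity shift~\eqref{poisson:regshift:general} in place of~\eqref{poisson:regshift}. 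For $g=Q_h f\in\PP^0(\TT)$ the oscillation term vanishes, and an inverse estimate together with the boundedness of $Q_h$ yield $h^{s_\Omega}\norm{Q_h f}{H_D^{-1+s_\Omega}(\Omega)}\lesssim h^t\norm{Q_h f}{H_D^{-1+t}(\Omega)}\lesssim h^t\norm{f}{H_D^{-1+t}(\Omega)}$. A final triangle inequality then gives the claim.

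The hard part will be the general reduced estimate. Its ingredients are quasi-optimality in the $W$ norm, which follows from the well-posedness and norm-equivalence of~\cite[Theorem~3.1]{CaiLazarovManteuffelMcCormickPart1}, together with an Aubin--Nitsche duality argument based on~\eqref{poisson:regshift:general}, in the spirit of~\cite{CaiKu2006,CaiKu2010}. The genuine obstacle compared with the Poisson case is the lower-order term $K$: for the Laplacian one has $\div\ssigma=-f$, so for piecewise-constant data the divergence residual is reproduced exactly by the commuting operator $\Pi_h^\mathrm{div}$ and the divergence oscillation simply vanishes. Here $\div\widetilde\ssigma = K\widetilde u - Q_h f$ is no longer piecewise constant, and $\norm{(1-\Pi_h^0)(K\widetilde u)}{}$ carries no convergence rate because $K\widetilde u$ is merely in $L^2(\Omega)$. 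Hence the full $W$-norm best approximation cannot deliver the rate, and one must measure the error in the reduced norm $\norm{\cdot}{H^1(\Omega)}+\norm{\cdot}{}$ and remove the divergence error by duality, pairing the residual $\div\ttau-Kv$ against the smoother solution of the adjoint problem so that the $K$-contribution is controlled in a negative norm and at the correct order.
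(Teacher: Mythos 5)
Your skeleton coincides with the paper's proof: the same auxiliary problem $-\div A\nabla\widetilde u+K\widetilde u=Q_hf$, the same consistency bound $\norm{u-\widetilde u}{H^1(\Omega)}\lesssim\norm{(1-Q_h)f}{H_D^{-1}(\Omega)}\lesssim h^t\norm{f}{H_D^{-1+t}(\Omega)}$ with $t=\min\{s_\Omega,s\}$, and the same concluding chain of inverse estimates, boundedness of $Q_h$, and the triangle inequality. Where you part ways is the discretization step, and there the paper is much more direct than you anticipate: it proves no reduced-norm analog of Proposition~\ref{apriori:lsq2} and uses no duality at all. It simply invokes quasi-optimality in the full $W$-norm with the choice $\vv_h=(J_h\widetilde u,\Pi_h^\mathrm{div}\widetilde\ssigma)$ and then disposes of the divergence contribution by asserting that the commutativity property~\eqref{eq:propertiesPihDiv} together with $Q_hf\in\PP^0(\TT)$ gives $\div(1-\Pi_h^\mathrm{div})\widetilde\ssigma=0$; the two surviving terms are bounded by approximation properties, the regularity shift~\eqref{poisson:regshift:general}, and the inverse estimate.

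Your suspicion about exactly this step is substantively correct: by commutativity, $\div(1-\Pi_h^\mathrm{div})\widetilde\ssigma=(1-\Pi_h^0)\div\widetilde\ssigma=(1-\Pi_h^0)(K\widetilde u)$, which vanishes only when $K\widetilde u$ is piecewise constant (e.g.\ $K=0$); for $K\neq0$ with merely $L^\infty$ coefficients this term carries no rate, so you have identified a real soft spot in the paper's own argument rather than reproduced it. However, your proposed cure is where your proposal stops being a proof: the ``general reduced estimate'' via Aubin--Nitsche duality is the entire difficulty, you only list its ingredients, and carrying it out would additionally require well-posedness and a regularity shift for the \emph{adjoint} problem (for nonsymmetric $K$ this is a hypothesis the paper never states). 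Moreover, your claim that the full $W$-norm best approximation ``cannot deliver the rate'' is too strong. If one assumes just enough smoothness of the coefficients of $K$ that $K$ maps $H^{1+s_\Omega}(\Omega)$ boundedly into $H^{s_\Omega}(\Omega)$, then the offending term obeys $\norm{(1-\Pi_h^0)K\widetilde u}{}\lesssim h^{s_\Omega}\norm{K\widetilde u}{H^{s_\Omega}(\Omega)}\lesssim h^{s_\Omega}\norm{\widetilde u}{H^{1+s_\Omega}(\Omega)}\lesssim h^{s_\Omega}\norm{Q_hf}{H_D^{-1+s_\Omega}(\Omega)}\lesssim h^t\norm{f}{H_D^{-1+t}(\Omega)}$, where the last step is the same inverse-estimate-plus-boundedness argument you use elsewhere; the paper's simple route then survives with one extra term and no duality. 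In short: your structure matches the paper's, your criticism of the divergence step is valid (and for $K=0$ both arguments coincide), but as written your proof delegates its hardest step to an unproven duality lemma, whereas a mild strengthening of the assumptions on $K$ repairs the elementary argument.
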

\begin{proof}
  Let $\widetilde u\in H_D^1(\Omega)$ denote the solution of~\eqref{poisson:general} with right-hand side $Q_hf$. With stability of the problem and the properties of $Q_h$
  we conclude
  \begin{align*}
    \norm{u-\widetilde u}{H^1(\Omega)} + \norm{\ssigma-\widetilde\ssigma}{} \lesssim \norm{u-\widetilde u}{H^1(\Omega)}
    \lesssim \norm{(1-Q_h)f}{H^{-1}_D(\Omega)} \lesssim h^t\norm{f}{H^{-1+t}_D(\Omega)},
  \end{align*}
  where $t:=\min\{s_\Omega,s\}$, and $\ssigma=A\nabla u$, $\widetilde\ssigma = A\nabla\widetilde u$.

  Choose $\vv_h = (J_h \widetilde u,\Pi_h^\mathrm{div}\widetilde\ssigma)$. The quasi-best approximation of the FOSLS (see~\cite{CaiLazarovManteuffelMcCormickPart1}) implies that 
  \begin{align*}
    \norm{\widetilde\uu-\uu_h}W \lesssim \norm{\widetilde\uu-\vv_h}{W} \leq \norm{(1-J_h)\widetilde u}{H^1(\Omega)} + \norm{(1-\Pi_h^\mathrm{div})\widetilde\ssigma}{} +
    \norm{\div(1-\Pi_h^\mathrm{div})\widetilde\ssigma}{}.
  \end{align*}
  The commutativity property of $\Pi_h^\mathrm{div}$ and $Q_h f\in \PP^0(\TT)$ show that $\div(1-\Pi_h^\mathrm{div})\widetilde\ssigma = 0$.
  Approximation properties together with~\eqref{poisson:regshift:general} and boundedness of $Q_h$ yield 
  \begin{align*}
    \norm{(1-J_h)\widetilde u}{H^1(\Omega)} + \norm{(1-\Pi_h^\mathrm{div})\widetilde\ssigma}{} \lesssim h^t\norm{Q_hf}{H^{-1+t}_D(\Omega)} \lesssim h^t\norm{f}{H^{-1+t}_D(\Omega)}.
  \end{align*}
  The triangle concludes the proof. 
\end{proof}

\end{document}